\def\namedlabel#1#2{\begingroup
    #2%
    \def\@currentlabel{#2}%
    \phantomsection\label{#1}\endgroup
}
\theoremstyle{plain}
\newtheorem{theorem}{Theorem}[section]
\newtheorem{lemma}[theorem]{Lemma}
\newtheorem{proposition}[theorem]{Proposition}
\newtheorem{fact}[theorem]{Fact}
\newtheorem*{fact*}{Fact}
\crefname{fact}{Fact}{Facts}
\newtheorem{corollary}[theorem]{Corollary}
\newtheorem{claim}[theorem]{Claim}
\newtheorem*{claim*}{Claim}
\theoremstyle{definition}
\newtheorem{definition}[theorem]{Definition}
\newtheorem*{definition*}{Definition}
\newtheorem*{notation*}{Notation}
\theoremstyle{remark}
\newtheorem{remark}[theorem]{Remark}
\newtheorem*{remark*}{Remark}
\newtheorem{example}[theorem]{Example}
\newtheorem*{example*}{Example}
\newtheorem*{note*}{Note}
\newtheorem{question}[theorem]{Question}
\newtheorem*{question*}{Question}
\newcommand{\N}{\mathbb{N}}
\newcommand{\Q}{\mathbb{Q}}
\newcommand{\C}{\mathcal{C}}
\newcommand{\vc}{\operatorname{vc}}
\newcommand{\tp}{\operatorname{tp}}
\newcommand{\fin}{\operatorname{fin}}
\newcommand{\cl}{\operatorname{cl}}
\newcommand{\U}{\mathcal{U}}
\newcommand{\M}{\mathcal{M}}
\newcommand{\opp}{\operatorname{opp}}
\newcommand{\Maj}{\operatorname{Maj}}
\newcommand{\acl}{\operatorname{acl}}
\newcommand{\dcl}{\operatorname{dcl}}
\newcommand{\ind}{\operatorname{ind}}
\newcommand{\eq}{{\operatorname{eq}}}
\newcommand{\fs}{\operatorname{fs}}
\newcommand{\Sfs}[2]{S_{\text{$#1$-fs}}(#2)}
\newcommand{\Sdfs}[3]{S^{#1}_{\text{$#2$-fs}}(#3)}
\newcommand{\Sdfsl}[4]{S^{#1}_{{#4},\text{$#2$-fs}}(#3)}
\providecommand{\Th}{\operatorname{Th}}
\providecommand{\set}[2]{\{#1 \mid #2\}}
\providecommand{\sequence}[2]{(#1)_{#2}}
\providecommand{\varsequence}[2]{(#1\mid #2)}
\renewcommand{\SS}{\mathcal{P}}
\providecommand{\phiopp}{{\phi^{\opp}}}
\newcommand{\defn}[1]{{\bf #1}}
\newcommand{\recall}[1]{{\it #1}}
\newcommand{\calF}{\mathcal F}
\renewcommand{\L}{\mathcal L}
\renewcommand{\S}{\mathcal S}
\providecommand{\eps}{\epsilon}
\providecommand{\tuple}[1]{\overline{#1}}
\providecommand{\bb}{\tuple{b}}
\providecommand{\xx}{\tuple{x}}
\providecommand{\yy}{\tuple{y}}
\providecommand{\zz}{\tuple{z}}
\providecommand{\ee}{\tuple{\epsilon}}
\providecommand{\acleq}{\acl^{\eq}}
\providecommand{\dcleq}{\dcl^{\eq}}
\providecommand{\pfin}{\mathcal P^{\fin}}
\providecommand{\Ffin}{\mathcal F^{\fin}}
\providecommand{\kc}{k_{\operatorname{comp}}}
\newcommand{\Bvc}{B_{\vc}}
\newcommand{\Aut}{\operatorname{Aut}}
\newcommand{\ravg}[1]{\operatorname{rAvg}#1}
\newcommand{\ravga}[2]{\operatorname{rAvg}_{#2}#1}
\newcommand{\Sh}{\text{Sh}}
\newcommand{\Npq}{N_{\operatorname{pq}}}
\newcommand{\ksd}{k_{\operatorname{sd}}}
\begin{document}

\title{Density of compressible types and some consequences}
\author{Martin Bays}
\address{Institut für Mathematische Logik und Grundlagenforschung, Fachbereich 
Mathematik und Informatik, Universität Münster, Einsteinstrasse 62, 48149 
Münster, Germany}
\email{mbays@sdf.org}
\author{Itay Kaplan}
\address{Einstein Institute of Mathematics, Hebrew University of
Jerusalem, 91904, Jerusalem Israel.}
\email{kaplan@math.huji.ac.il}

\author{Pierre Simon}
\address{Dept.\ of Mathematics, University of California, Berkeley, 970 Evans 
Hall \#3840, Berkeley, CA 94720-3840 USA}
\email{simon@math.berkeley.edu}
\thanks{Bays was partially supported by DFG EXC 2044–390685587 and ANR-DFG 
AAPG2019 (Geomod).
Kaplan would like to thank the Israel Science Foundation for their
support of this research (grant no. 1254/18).
Simon was partially supported by the NSF (grants no. 1665491 and 1848562).}

\begin{abstract}
  We study compressible types in the context of (local and global) NIP. By extending a result in machine learning theory (the existence of a bound on the recursive teaching dimension), we prove density of compressible types. Using this, we obtain explicit uniform honest definitions for NIP formulas (answering a question of Eshel and the second author), and build compressible models in countable NIP theories.
 \end{abstract}

\maketitle

\section{Introduction}

By the Sauer-Shelah lemma, if a formula $\phi(x;y)$ is NIP, then the number of $\phi$-types over a finite set $A$ is bounded by a polynomial in the cardinality of $A$. For a stable formula, this is a consequence of definability of types: one only needs to specify the parameters involved in the definition. In dense linear orders, the reason for this phenomenon is different: for any finite set $A$ and element $b$, the $\leq$-type of $b$ over $A$ is implied by its restriction to some subset $A_0$ of size 2: the information of the full type can be \emph{compressed} down to this subset of bounded size. A $\leq$-type over an infinite set cannot in general be compressed down to a finite set, however finite parts of it can be uniformly compressed; following \cite{simon-decomposition}, we call such a type \emph{compressible} (\cref{def:compressible types prelim}). We expect NIP formulas to exhibit a combination of those two behaviours.

For NIP theories one manifestation of this philosophy is the result from \cite{simon-decomposition} that an arbitrary type has a generically stable part up to which it is compressible. Distal structures are (NIP) structures in which every type is compressible and hence this decomposition is trivial. For stable theories, compressible types turn out (\cref{lem:comp stable type}) to be precisely types which are \emph{l-isolated}, that is, isolated formula by formula. These play a role in Shelah's classification theory; one key property is that in a countable stable theory, an l-atomic model exists over any set \cite[IV.2.18(4),3.1(5),3.2(1)]{Sh-CT}. In this paper, we think of compressibility as an isolation notion and investigate its properties by analogy with the stable case. In order to obtain similar model-construction results, we need two basic properties: density of compressible types and transitivity of compressibility.

Density of compressible types over a set $A$ means that every formula over $A$ extends to a complete compressible type over $A$. We prove this for countable NIP theories (\cref{cor:density of comp in countable NIP}) by first considering the local setting of a single NIP formula $\phi$, and showing that any finite partial $\phi$-type extends to a complete compressible $\phi$-type (\cref{cor:localComp}). This is a combinatorial argument based on the proof by Chen, Cheng, and Tang \cite{CCT} of a bound on the ``recursive teaching dimension'' of a finite set system in terms of its VC-dimension. The existence of such a bound was used in \cite{UDTFS} to prove uniform definability of types over finite sets (UDTFS) for an NIP formula in an arbitrary theory. We generalise this result (answering \cite[Question~28]{UDTFS}) by showing uniformity of honest definitions for NIP formulas, which was previously known only assuming NIP for the whole theory \cite[Theorem~11]{CS-extDef2}. For this, we first show that an arbitrary $\phi$-type $p$ is a \emph{rounded average} of finitely many compressible types (\cref{thm:rounded average of compressible}). The rounded average of the compression schemes of these types gives an honest definition for $p$.

In fact, it turns out that full consistency of $p$ is not required here: for large enough $k$ we get uniform honest definitions for $k$-consistent families of instances of $\phi$, which we dub \emph{$k$-hypes} (\cref{cor:honest definitions for k-hypes}). Using this, we also obtain in \cref{thm:pseudofinite types are definable} uniform definability of $\phi$-types which are \emph{pseudofinite} in the sense that their positive and negative parts are pseudofinite (\cref{def:pseudofinite type}).

In order to prove transitivity, namely that $\tp(AB/C)$ is compressible when $\tp(A/BC)$ and $\tp(B/C)$ are, we return to the global setting of an NIP theory and use the type-decomposition theorem from \cite{simon-decomposition}. We show in \cref{prop:comp over smaller set with constants} that compressibility can be \emph{rescoped} to an arbitrary subset of the domain: if $\tp(a/B)$ is compressible and $C\subseteq B$, then $\tp(a/C)$ is compressible in the language with constants for elements of $B$. We deduce transitivity in \cref{prop:transitivity}.

Finally, we conclude that for countable NIP theories (or even countable theories naming any set of constants) one can construct models which are compressible over arbitrary sets (\cref{prop:comp constructible,prop:comp constructible atomic}). We give several applications:

$\bullet$ Given a definable unary set $X$ whose induced structure is stable, and any model $N$ of the theory of the induced structure, there is a model $M$ of $T$ such that $X(M)=N$ and moreover, if $N' \succ N$ then there is $M' \succ M$ such that $X(M')=N'$. This is \cref{cor:reduct map surjective}.

$\bullet$ If the theory is not stable, we can extend models without realising any non-algebraic generically stable type (\cref{cor:extend Preserve Stable,rem:extend preserve stable omega-sat}).

$\bullet$ We analyse compressiblity in ACVF, showing that then any model $M$ containing $A$ whose residue field is algebraic over $A$ is compressible over $A$ (\cref{exa:ACVF}).


\subsection{Acknowledgements}
We thank to Nati Linial and Shay Moran for answering a question that turned out 
to be precisely about the existence of a bound for the recursive teaching 
dimension, introducing us to this notion and to \cite{k-isolationQuadratic}.

We also thank Timo Krisam for helpful conversation which led to an improvement in the formulation of \cref{sec:decompComp}, and Eran Alouf for asking questions that led to \cref{thm:pseudofinite types are definable}.

Additionally we thank Anand Pillay and Martin Hils for suggesting that we consider the problem that led us to \cref{cor:reduct map surjective}.

Furthermore, we thank the anonymous referee for their careful reading of the manuscript and their many useful and precise comments which improved the presentation of the paper.
\section{Preliminaries}

\subsection{Languages, formulas and types} \label{sec:notations}
Our notation is standard. We use $\L$ to denote a first order language and $\phi(x,y)$ to denote a formula $\phi$ with a partition of (perhaps a superset of) its free variables. When $x$ is a (possibly infinite) tuple of variables and $A$ is a set contained in some structure (perhaps in a collection of sorts), we write $A^{x}$ to denote the tuples of the sort of $x$ (and of length $|x|$) of elements from $A$; alternatively, one may think of $A^{x}$ as the set of assignments of the variables $x$ to $A$. When $M$ is a structure and $A \subseteq M^x$, $b\in M^y$, we define $\phi(A,b)=\set{a\in A}{M \vDash \phi(a,b)}$.

$T$ will denote a complete theory in $\L$ (we do not really need $T$ to be complete, but it is more convenient), and $\U \vDash T$ will be a monster model (a sufficiently large saturated model\footnote{There are set theoretic issues in assuming that such a model exists, but these are overcome by standard techniques from set theory that ensure the generalised continuum hypothesis from some point on while fixing a fragment of the universe. The reader can just accept this or alternatively assume that $\U$ is merely $\kappa$-saturated and $\kappa$-strongly homogeneous for large enough $\kappa$.}). The word \emph{small} means ``of cardinality $<|\U|$''. As usual, we will assume that all models, tuples and sets of parameters are small and are contained in (perhaps a collection of sorts from) $\U$ unless stated otherwise. Some results, such as \cref{thm:rounded average of compressible}, hold for any set, by considering a bigger monster model and applying the result there.

When $B \subseteq \U$, $\L(B)$ is the language $\L$ augmented with constants for elements from $B$, and $\U_B$ is the natural expansion of $\U$ to $\L(B)$. A \recall{partial type} in variables $x$ (perhaps infinite, perhaps from different sorts) over $B \subseteq \U$ is a set of $\L(B)$-formulas in $x$ consistent with $\Th(\U_B)$ (i.e., formulas over $B$). For a  partial type $\pi$ over $B$ and $C \subseteq B$, we use the notation $\pi |_C$ for the \recall{restriction} of $\pi$ to $C$, namely all formulas $\phi(x) \in \L(C)$ implied by $\pi$ (i.e., $\pi \vdash \phi(x)$). Similarly, if $x'$ is a sub-tuple of $x$, the restriction of $\pi$ to $x'$ is the partial type consisting of all formulas in $x'$ implied by $\pi$.

A (complete) type over $B$ is a maximal partial type  over $B$. We denote the space of types over $B$ in $x$ by $S^x(B)$. It is a compact Hausdorff topological space in the logic topology (a basic open set has the form $\set{p \in S^x(B)}{\phi(x) \in p}$). For $a \in \U^x$, write $\tp(a/B)$ for the type of $a$ over $B$. $S(B)$ is the union of all types over $B$.

For an $\L$-formula $\phi(x,y)$, an \recall{instance} of $\phi$ over $B \subseteq  \U$ is a formula $\phi(x,b)$ where $b \in B^y$, and a (complete) \recall{$\phi$-type} over $B$ is a maximal partial type consisting of instances and negations of instances of $\phi$ over $B$. We write $S_{\phi}(B)$ 
for the space of $\phi$-types over $B$ in $x$ (in this notation we keep in mind the partition $(x,y)$, and $x$ is the first tuple there). As above, it is a compact Hausdorff topological space in the logic topology. For $a \in \U^x$, we write $\tp_\phi(a/B) \in S_\phi(B)$ for its $\phi$-type over $B$. We also use the notation $\phi^1=\phi$ and $\phi^0 = \neg \phi$. When $p(x)\in S(B)$ is a type, we write $p\restriction \phi \in S_\phi(B)$ for the complete $\phi$-type over $B$ implied by $p$. If $\Delta$ is a set of partitioned formulas $\phi(x,y)$, we define $S_\Delta(B)$ and the restriction $p\restriction \Delta \in S_\Delta(B)$ similarly.

We will also consider the case where $B \subseteq \U^y$ and (abusing notation) define $S_\phi(B)$ similarly --- this should never cause a confusion.

If $\pi(x)$ is a small partial type (over some small set contained in $\U$), we write $S_\phi^\pi(B)$ for the closed subspace of $S^x_\phi(B)$ consisting of the $\phi$-types which are consistent with $\pi$.

Generally we do not limit our discussion to finite tuples of variables (but in the context of $\phi$-types for a formula $\phi$ this does not matter).

We write $A \subseteq_{\fin}  B$ to mean that $A$ is a finite subset of $B$.

\subsection{Global and invariant types}
For $A \subseteq \U$, an \defn{$A$-invariant} type is a \recall{global} type, i.e., a type over $\U$, which is invariant under the action of $\Aut(\U/A)$, the group of automorphisms fixing $A$ pointwise.

For a sequence $\sequence{x_i}{i \in I}$ and $j \in I$, we write $x_{< j}$ for $\sequence{x_i}{i<j}$, and similarly for $x_{\leq j}$.

\begin{definition}
  If $q(x)$ and $r(y)$ are $A$-invariant global types, then the type $(q \otimes r)(x,y)$ is defined to be $\tp(a,b/\U)$ (in a bigger monster model) for any $b \vDash r$ and $a \vDash q|_{\U b}$ (here we understand $q$ to mean its unique $A$-invariant extension to a bigger model). (This can also be defined without stepping outside of the monster model, see \cite[Chapter~2]{simon-NIP}.)

  We define $q^{(n)}(x_{<n})$ for $n <\omega$ by   induction:  $q^{(1)}(x_0) = q(x_0)$,
  \[q^{(n+1)}(x_{\leq n})= q(x_{n}) \otimes q^{(n)}(x_{<n}),\]
  and $q^{(\omega)}(x_{<\omega})=\bigcup_{n<\omega}q^{(n)}$.

  For any linear order $(X,<)$, we can define $q^{(X)}\varsequence{x_i}{i\in X}$ similarly, as the union of $q^{(X_0)}\varsequence{x_i}{i\in X_0}$ for every finite $X_0 \subseteq X$.

\end{definition}

\begin{fact}\cite[Chapter~2]{simon-NIP} \label{fac:sequences generated by invariant types}
  Given a global $A$-invariant type $q$ and a linear order $(X,<)$, $q^{(X)}$ is an $A$-invariant global type.  In addition, it is the type of an indiscernible sequence over $\U$.

  For any small set $B \supseteq A$, $q^{(X)}|_B$ is given by $\tp(\varsequence{a_i}{i\in X} / B)$ where $a_i \vDash q|_{Ba_{<i}}$. This is a \defn{Morley sequence of $q$ over $B$} (indexed by $X$).
\end{fact}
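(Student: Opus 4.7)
The plan is to induct on finite initial segments of $X$ and then pass to the union $q^{(X)} = \bigcup_{X_0 \subseteq_{\fin} X} q^{(X_0)}$. First I would establish the two basic properties of the tensor product of $A$-invariant global types: if $q(x)$ and $r(y)$ are both $A$-invariant, then $q \otimes r$ is well-defined (independent of the choice of witness $b \vDash r|_{\U c}$ taken in a bigger monster model) and is itself $A$-invariant. The well-definedness reduces to the observation that any two such witnesses $b, b'$ satisfy $bc \equiv_{\U} b'c$, hence a fortiori $bc \equiv_A b'c$, so $A$-invariance of $q$ forces $\phi(x, b, c)$ and $\phi(x, b', c)$ to receive the same answer. $A$-invariance of $q \otimes r$ then follows immediately, using $A$-invariance of $r$ to transport a witness for $c$ to one for $c'$ whenever $c \equiv_A c'$.

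From this a routine induction on $n$ gives that each $q^{(n)}$ is an $A$-invariant global type. Taking the union over finite $X_0 \subseteq X$ yields $q^{(X)}$ with the same properties; well-definedness of the union rests on the observation that $q^{(X_0)}$ depends only on $|X_0|$ up to the obvious relabelling of variables, so the restrictions of $q^{(X_0)}$ and $q^{(X_1)}$ to common sub-tuples of variables agree.

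For the Morley-sequence characterisation over a small $B \supseteq A$, I would again induct on $n$ to show that if $a_i \vDash q|_{B a_{<i}}$ for each $i < n$, then $(a_i)_{i<n} \vDash q^{(n)}|_B$. The successor step unpacks the definition of $\otimes$: a formula $\phi(x_n, x_{<n}, c)$ with $c \in B$ lies in $q^{(n+1)}|_B$ iff for some (equivalently, any) $b \vDash q^{(n)}|_{Bc}$ one has $\phi(x_n, b, c) \in q|_{Bbc}$. Since $(a_i)_{i<n} \vDash q^{(n)}|_B = q^{(n)}|_{Bc}$ by the inductive hypothesis, and since $a_n \vDash q|_{B a_{<n}}$, the witness $b = a_{<n}$ does the job. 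Applied in a bigger monster with $B = \U$, the same argument gives indiscernibility over $\U$: any two increasing $n$-subsequences of $(a_i)_{i \in X}$ realise the common type $q^{(n)}|_\U$, hence have equal types over $\U$. The statement for infinite $X$ reduces immediately to the finite case by passing to the sub-tuple of variables involved in a given formula.

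The only point requiring care is the set-theoretic bookkeeping around the bigger monster model needed to witness $r|_\U$ or to realise $q^{(X)}$; one must check that the resulting $\phi$-answers over $\U$ do not depend on the particular extension chosen. $A$-invariance of $q$ and $r$ is precisely what guarantees this, so this is less a genuine obstacle than a point to keep in mind throughout.
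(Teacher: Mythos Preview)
The paper does not give a proof of this statement: it is recorded as a \emph{Fact} with a citation to \cite[Chapter~2]{simon-NIP}, so there is no in-paper argument to compare against. Your sketch is the standard proof one finds there---well-definedness and $A$-invariance of $\otimes$, induction on $n$ for $q^{(n)}$, then the union over finite sub-tuples---and it is correct.
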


\subsection{VC-dimension and NIP}

\begin{definition} [VC-dimension]
  Let $X$ be a set and $\mathcal{F}\subseteq\SS(X)$. The pair $(X,\mathcal{F})$ is called a \defn{set system}.
  We say that $A\subseteq X$ is \defn{shattered} by $\mathcal{F}$ if for every $S\subseteq A$ there is $F\in\mathcal{F}$ such that $F\cap A=S$. A family $\mathcal{F}$ is said to be a \defn{VC-class} on $X$ if there is some $n<\omega$ such that no subset of $X$ of size $n$ is shattered by $\mathcal{F}$. In this case the \defn{VC-dimension of $\mathcal{F}$}, denoted by $\vc(\mathcal{F})$, is the smallest integer $n$ such that no subset of $X$ of size $n+1$ is shattered by $\mathcal{F}$.

  If no such $n$ exists, we write $\vc(\mathcal{F})=\infty$.
\end{definition}

\begin{definition} \label{def:NIP formula, theory}
  Suppose $T$ is an $\L$-theory and $\phi(x,y)$ is a formula.
  Say $\phi(x,y)$ is \defn{NIP} if for some/every $M\vDash T$, the family $\set{\phi(M^x,a)}{a\in M^{y}}$ is a VC-class.

  The theory $T$ is \defn{NIP} if all formulas are NIP.
\end{definition}

\begin{definition}
  Suppose $T$ is an $\L$-theory and $\phi(x,y)$ is an NIP formula. Let $\vc(\phi)$ be the VC-dimension of $\set{\phi(M^x,a)}{a\in M^{y}}$, where $M$ is any (some) model of $T$.  Note that this definition depends on the partition of variables.

  Let $\phiopp$ be the partitioned formula $\phi(y,x)$ (it is the same formula with the partition reversed). Let $\vc^*(\phi) = \vc(\phiopp)$ be the \defn{dual} VC-dimension of $\phi$.
\end{definition}

\begin{fact} \cite[Lemma~6.3]{simon-NIP} \label{fact:dual}
  Suppose $\mathcal{F}$ is a VC-class on $X$. Let $\mathcal{F}^* = \set{\set{s \in \mathcal F}{x \in s}}{x \in X} \subseteq \SS(\mathcal{F})$ be the \defn{dual} of $\mathcal{F}$. Then $\mathcal{F}$ is a VC-class iff $\mathcal{F}^*$ is, and moreover $\vc^*(\mathcal{F}):=\vc(\mathcal{F}^*)<2^{\vc(\mathcal{F})+1}$.
\end{fact}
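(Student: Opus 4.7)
The plan is to prove the quantitative bound $\vc(\mathcal{F}^*) < 2^{\vc(\mathcal{F})+1}$ directly, and then derive the equivalence as a consequence (using that $\mathcal{F}$ embeds, as a set system, into its double dual).

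For the bound, I would argue by contrapositive: assuming $\mathcal{F}^*$ shatters some collection $\{F_1,\dots,F_N\} \subseteq \mathcal{F}$ with $N = 2^{d+1}$, I will produce a subset of $X$ of size $d+1$ shattered by $\mathcal{F}$, so that $\vc(\mathcal{F}) \geq d+1$. The trick is to use the $2^{d+1}$ available sets as rows of a ``truth table'' of length $d+1$. Concretely, identify the index set $\{1,\dots,N\}$ with $\{0,1\}^{d+1}$ via a bijection $i \mapsto \sigma_i$, and for each coordinate $k \in \{1,\dots,d+1\}$ consider the subfamily
\[
T_k = \set{F_i}{\sigma_i(k)=1} \subseteq \{F_1,\dots,F_N\}.
\]
Since $\mathcal{F}^*$ shatters $\{F_1,\dots,F_N\}$, for each $k$ there is a witness $x_k \in X$ with $\set{F_i}{x_k \in F_i} = T_k$, i.e.\ $x_k \in F_i \iff \sigma_i(k) = 1$.

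Next I would check that $\mathcal{F}$ shatters $\{x_1,\dots,x_{d+1}\}$. Given any $S \subseteq \{x_1,\dots,x_{d+1}\}$, let $\sigma \in \{0,1\}^{d+1}$ be its indicator and pick $i$ with $\sigma_i = \sigma$; then by the defining property of the $x_k$,
\[
F_i \cap \{x_1,\dots,x_{d+1}\} = \set{x_k}{\sigma(k)=1} = S,
\]
contradicting $\vc(\mathcal{F})=d$. This gives the strict inequality $\vc(\mathcal{F}^*) < 2^{\vc(\mathcal{F})+1}$.

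For the ``iff'', one direction is immediate from the bound. For the converse, apply the same inequality to $\mathcal{F}^*$: if $\vc(\mathcal{F}^*)=e<\infty$ then $\vc(\mathcal{F}^{**}) < 2^{e+1}$, where $\mathcal{F}^{**} \subseteq \SS(\mathcal{F}^*)$ is the dual of the dual. The map $X \to \SS(\mathcal{F}^*)$ sending $x$ to $\{s^* \in \mathcal{F}^* : s^* \ni x\}$ (where $s^* = \set{s \in \mathcal{F}}{x_s \in s}$) realises each element of $\mathcal{F}$, viewed as a subset of $X$, as a trace of an element of $\mathcal{F}^{**}$; hence any set shattered by $\mathcal{F}$ in $X$ would be shattered by $\mathcal{F}^{**}$ after transport, forcing $\vc(\mathcal{F}) \leq \vc(\mathcal{F}^{**}) < \infty$. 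The only subtlety is to write this embedding/shattering transfer carefully, which I expect to be the main technical nuisance — the counting argument itself is short and clean.
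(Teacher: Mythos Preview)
The paper does not prove this statement; it is quoted as a fact from \cite{simon-NIP} without argument. Your proof is the standard one and is correct. One small point you leave implicit: in the main argument you should note that the witnesses $x_1,\dots,x_{d+1}$ are pairwise distinct, since if $x_k=x_{k'}$ then $\sigma_i(k)=\sigma_i(k')$ for all $i$, which fails as the $\sigma_i$ enumerate all of $\{0,1\}^{d+1}$. Your sketch of the double-dual direction is accurate; the ``technical nuisance'' you anticipate amounts to checking that the map $x\mapsto\{s\in\mathcal F:x\in s\}$ is injective on any set shattered by $\mathcal F$, which follows immediately from shattering.
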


\begin{remark}\label{rem:bound on vc-dimension of dual}
  By \cref{fact:dual}, $\phi$ is NIP iff $\phiopp$ is NIP, and $\vc(\phiopp)=\vc^*(\phi)<2^{\vc(\phi)+1}$.
\end{remark}

By \cite[Lemma~2.9]{simon-NIP}, a Boolean combination of NIP formulas is NIP. In particular, if $\phi_i(x,y_i)$ is NIP for $i<k$ then so is $\bigwedge_{i<k} \phi_i(x,y_i)$. We end this subsection by giving an explicit bound on its VC-dimension; see also \cite[Theorem~9.2.6]{MR876078}. (This will be used only in \cref{sec:rounded average proof}.).

\begin{definition}\label{def:binom vc}
  Let $\binom s{\leq k} = \sum_{i\leq k} \binom s i$, and let $\Bvc(n,k) = \max \set{s \in \N}{\binom s{\leq k}^n \geq  2^s}$.
\end{definition}

\begin{remark} \label{rem:Bvc(n k) >= n k}
  For all $1 \leq n,k\in \N$, $\Bvc(n,k) \geq n,k$.

  Indeed, $\binom n {\leq k} \geq 2$, so $\binom n {\leq k}^n \geq 2^n$. Hence $\Bvc(n,k) \geq n$.
  Similarly, to show that $\Bvc(n,k) \geq k$, note that $\binom k {\leq k}^n = 2^{kn} \geq 2^k$.
\end{remark}

\begin{lemma} \label{lem:vcBool}
  Let $k \in \N$, and let $\phi_1(x;y_0),\ldots ,\phi_n(x;y_{n-1})$ be partitioned formulas with $\vc(\phi_i) \leq  k$.
  Let $\theta(x;y_0,\ldots ,y_{n-1}) = \bigwedge_{i<n} \phi_i(x,y_i)$.
  Then $\vc(\theta) \leq  \Bvc(n,k)$.
\end{lemma}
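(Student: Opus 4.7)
The plan is to combine the Sauer-Shelah lemma with the simple counting observation that a trace of a conjunction on a finite set is an intersection of traces of the conjuncts.

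First I would fix a model $M \vDash T$ and a finite set $A \subseteq M^x$ of size $s$, and set $\mathcal{F}_i = \{\phi_i(M^x,b) : b \in M^{y_i}\}$ and $\mathcal{G} = \{\theta(M^x,\bar{b}) : \bar{b} \in M^{y_0}\times\cdots\times M^{y_{n-1}}\}$. Since each $\vc(\phi_i) \leq k$, the Sauer-Shelah lemma gives $|\{F \cap A : F \in \mathcal{F}_i\}| \leq \binom{s}{\leq k}$ for each $i$.

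Next, the key observation is that the trace of an instance of $\theta$ on $A$ factors as $\theta(M^x,\bar{b}) \cap A = \bigcap_{i<n} (\phi_i(M^x, b_i) \cap A)$, so the trace of $\theta$ on $A$ is determined by the $n$-tuple of traces of the $\phi_i$'s on $A$. Consequently,
\[
|\{G \cap A : G \in \mathcal{G}\}| \;\leq\; \prod_{i<n} |\{F \cap A : F \in \mathcal{F}_i\}| \;\leq\; \binom{s}{\leq k}^n.
\]

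Finally, if $A$ is shattered by $\mathcal{G}$, then $2^s \leq |\{G \cap A : G \in \mathcal{G}\}| \leq \binom{s}{\leq k}^n$. By the definition of $\Bvc(n,k)$ as the maximum of all $s$ satisfying $\binom{s}{\leq k}^n \geq 2^s$, we conclude $s \leq \Bvc(n,k)$, so $\vc(\theta) \leq \Bvc(n,k)$. There is no serious obstacle here; the only thing to double-check is that the definition of $\Bvc(n,k)$ makes sense (i.e., the set is nonempty and bounded), which is immediate since $s=0$ trivially qualifies and $\binom{s}{\leq k}^n$ grows polynomially in $s$ while $2^s$ grows exponentially.
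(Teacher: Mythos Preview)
Your proof is correct and follows essentially the same approach as the paper: bound the number of $\theta$-traces on a set of size $s$ by $\binom{s}{\leq k}^n$ via Sauer--Shelah and the intersection observation, then conclude from the definition of $\Bvc(n,k)$. The paper presents it contrapositively (starting with $s > \Bvc(n,k)$ and showing no shattering), but the content is identical.
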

\begin{proof}
  Let $s > \Bvc(n,k)$,
  and let $A_0 \subseteq  \U^x$ with $|A_0| = s$.
  For each $i$, by Sauer-Shelah \cite[Lemma~6.4]{simon-NIP},
  at most $\binom s{\leq k}$ subsets of $A_0$ are defined by instances of $\phi_i$; hence at most $\binom s{\leq k}^n$ are defined by instances of $\theta$. It follows from the definition of $\Bvc(n,k)$ that $\theta$ does not shatter $A_0$.
\end{proof}

\subsection{Compressible types} \label{sec:compressible types}
Here we will review the basic properties of compressible types.

\begin{definition}\label{def:compressible types prelim}
  A type $p(x) \in S(A)$ is \defn{compressible} if for any formula $\phi(x,y)$ there is a formula $\psi(x,z)$ such that for every finite set $A_0 \subseteq A$, there is some $c \in A^z$ such that
  \begin{itemize}
    \item $\psi(x,c) \in p$ and
    \item $\psi(x,c) \vdash (p \restriction \phi)|_{A_0}$ (i.e., it implies the set $\set{\phi(x,a)}{\phi(x,a)\in p,\; a \in A_0^y} \cup \set{\neg \phi(x,a)}{\phi(x,a)\notin p,\; a \in A_0^y}$).
  \end{itemize}
\end{definition}

Suppose $A \subseteq \U$. Given $a \in \U^x$ any tuple, we let $(A,a)$ be the structure with universe $A$ and the induced structure coming from $a$-definable sets. In other words, for every formula $\phi(x,y)$, there is a relation $R_\phi(y)$ interpreted by $R_\phi(c)$ iff $\U \vDash \phi(a,c)$ for any $c\in A^y$. Note that if $M \equiv (A,a)$ then $M \cong (A',a)$ for some $A' \subseteq \U$, and moreover if $M \succ (A,a)$ then there is such an $A' \subseteq \U$ such that $M$ and $(A',a)$ are isomorphic over $A$. 
Thus, whenever we have such a structure, we will always assume it has the form $(A',a)$ for some $A' \subseteq \U$.

This construction preserves useful information on the type $\tp(a/A)$. For example, recall that a type $p(x) \in S(A)$ is \recall{definable} if for every formula $\phi(x,y)$, the set $\set{a \in A^y}{\phi(x,a)\in p}$ is definable over $A$. It is easy to see that if $\tp(a/A)$ is definable and $(A',a') \equiv (A,a)$ then $\tp(a'/A')$ is also definable (with the same definition scheme). Moreover, we have:

\begin{fact}\label{fac:compressiblity preserved under elem eq}
  \cite[Lemma~3.2]{simon-decomposition} If $\tp(a/A)$ is compressible and $(A',a') \equiv (A,a)$, then so is $\tp(a'/A')$.
\end{fact}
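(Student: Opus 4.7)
The plan is to encode the compressibility of $\tp(a/A)$ as a family of first-order sentences in the induced structure $(A,a)$, and then transfer them to $(A',a')$ via elementary equivalence. The key observation is that the entailment clause ``$\psi(x,c) \vdash \phi(x,b)^\epsilon$'' in \cref{def:compressible types prelim} is itself a first-order property of the parameters $c,b$ in $\U$. Indeed, for fixed $\phi(x,y)$, $\psi(x,z)$, and $\epsilon \in \{0,1\}$, set
\[
  \chi_\epsilon(z,y) \;:=\; \forall x\bigl(\psi(x,z) \to \phi(x,y)^\epsilon\bigr).
\]
Since $x$ does not appear freely in $\chi_\epsilon$, it can be viewed as a formula $\chi_\epsilon(x,z,y)$ with no dependence on $x$, and thus gives rise to a relation $R_{\chi_\epsilon}$ on $A^{z,y}$ in the induced structure $(A,a)$.

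With this in place, for each $\phi(x,y)$, each candidate $\psi(x,z)$, and each $n<\omega$, consider the $\L(A,a)$-sentence
\[
  \sigma_{\phi,\psi,n} \;:=\; \forall y_1 \ldots \forall y_n\, \exists z\, \Bigl[\, R_\psi(z) \wedge \bigwedge_{i=1}^{n}\bigwedge_{\epsilon\in\{0,1\}}\bigl(R_\phi(y_i)^\epsilon \to R_{\chi_\epsilon}(z,y_i)\bigr)\,\Bigr].
\]
Unwinding definitions, $\tp(a/A)$ is compressible if and only if, for every $\phi(x,y)$, there exists $\psi(x,z)$ such that $(A,a) \models \sigma_{\phi,\psi,n}$ for all $n<\omega$. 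Compressibility therefore amounts to the satisfaction of an $\L(R_\phi,R_\psi,R_{\chi_\epsilon} : \phi,\psi,\epsilon)$-theory in $(A,a)$.

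Now suppose $\tp(a/A)$ is compressible and $(A',a') \equiv (A,a)$. Fix $\phi$ and let $\psi$ be the witnessing formula provided by compressibility. Then $(A,a)\models \sigma_{\phi,\psi,n}$ for every $n$, so the same holds in $(A',a')$ by elementary equivalence. Applying $\sigma_{\phi,\psi,n}$ in $(A',a')$ to an arbitrary finite $A'_0 \subseteq A'$ yields some $c \in A'^z$ with $\U\models\psi(a',c)$ and $\U\models \chi_\epsilon(c,b)$ whenever $b\in A'_0$ satisfies $\phi(a',b)^\epsilon$; this is exactly the compressibility condition for $\tp(a'/A')$ with respect to $\phi$ (with witness $\psi$). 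The only real point requiring care is recognising that the entailment appearing in \cref{def:compressible types prelim} is first-order expressible in $\U$ via the auxiliary formulas $\chi_\epsilon$, and hence encodable in the induced structure; once this is observed the transfer is immediate.
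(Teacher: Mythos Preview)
Your proof is correct. The paper does not give its own proof of this fact (it cites \cite[Lemma~3.2]{simon-decomposition}), but your approach---expressing compressibility as a schema of first-order sentences in the induced structure $(A,a)$ and transferring via elementary equivalence---is exactly the natural one, and matches the argument the paper later spells out for the analogous statement about l-isolation in \cref{rem:being l-isolated is elementary}.
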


Compactness gives the following equivalent definition of compressibility:
\begin{fact}\label{fac:compressible in terms of elementary extension}
  The type $p=\tp(a/A)$ is compressible iff for any (some) $|A|^+$-saturated elementary extension $(A',a) \succ (A,a)$ and any formula $\phi(x,y)$, there is some formula $\psi(x,z)$ and $d \in (A')^z$ such that $\psi(a,d)$ holds and $\psi(x,d) \vdash (p \restriction \phi)|_A$.
\end{fact}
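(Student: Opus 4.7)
The plan is to prove both directions by reformulating the compressibility data as a partial type over $A$ in the auxiliary structure $(A,a)$, and then invoking saturation (respectively elementary equivalence).

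The main setup: fix $\phi(x,y)$ and a candidate $\psi(x,z)$. Let
\[
\theta_1(z,y) \;=\; \forall x\,(\psi(x,z) \to \phi(x,y)), \qquad \theta_0(z,y) \;=\; \forall x\,(\psi(x,z) \to \neg\phi(x,y)).
\]
These are pure $\L$-formulas, hence define relations in $(A,a)$ (using dummy $x$ in the indexing convention for $(A,a)$). For each $a'\in A^y$ let $\epsilon(a')\in\{0,1\}$ record whether $\phi(x,a')\in p$. Consider the partial type in variables $z$ over $A$:
\[
\Sigma_\psi(z) \;=\; \{R_\psi(z)\} \;\cup\; \{R_{\theta_{\epsilon(a')}}(z,a') : a' \in A^y\}.
\]
A realization of $\Sigma_\psi(z)$ by some $d$ in an extension $(A',a)\subseteq \U$ is precisely what the right-hand side of the equivalence demands: $R_\psi(d)$ says $\psi(a,d)$ holds, and the remaining conditions say $\psi(x,d)\vdash (p\restriction\phi)|_A$.

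For the forward direction, assume $\tp(a/A)$ is compressible and let $\psi$ be the compression scheme for $\phi$ given by \cref{def:compressible types prelim}. For each finite $A_0\subseteq A$, the witness $c\in A^z$ guaranteed by compressibility realizes the restriction of $\Sigma_\psi$ to $A_0$. Hence $\Sigma_\psi$ is finitely satisfiable in $(A,a)$, so it is a consistent partial type. In a $|A|^+$-saturated elementary extension $(A',a)$, $\Sigma_\psi$ is realized by some $d\in (A')^z$, giving the required $\psi,d$.

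For the backward direction, fix $\phi$ and obtain $\psi,d\in (A')^z$ from the hypothesis. For any finite $A_0\subseteq A$, the existential $\L(A_0)$-sentence
\[
\exists z\Bigl(R_\psi(z) \wedge \bigwedge_{a'\in A_0^y} R_{\theta_{\epsilon(a')}}(z,a')\Bigr)
\]
holds in $(A',a)$ (witnessed by $d$), hence by $(A,a)\preceq (A',a)$ it holds in $(A,a)$, yielding the required $c\in A^z$. This gives the ``some $\Rightarrow$ compressibility $\Rightarrow$ any'' chain, so the ``some'' and ``any'' formulations coincide.

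No step here is truly difficult; the only thing to keep track of is the bookkeeping in $(A,a)$ (making sure that the relations $R_{\theta_\epsilon}$ legitimately live in this structure via dummy-variable padding, and that saturation is applied in variables $z$ over the parameter set $A$).
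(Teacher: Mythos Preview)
Your proof is correct and is precisely the compactness argument the paper leaves implicit (the paper merely states ``Compactness gives the following equivalent definition of compressibility'' before the fact, without spelling out the details). Your encoding of the compressibility conditions as the partial type $\Sigma_\psi$ in the structure $(A,a)$, followed by saturation for the forward direction and elementarity for the backward direction, is exactly the intended unfolding.
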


In fact, this was the original definition of compressibility in \cite[Definition~3.1]{simon-decomposition}.

We give another useful characterisation of compressible types.
Recall that two types $p(x)$, $q(y)$ over $A$ are \recall{weakly orthogonal} if $p \cup q$ implies a complete type in $x,y$ over $A$.

A type $q$ is \recall{finitely satisfiable} in some set $A$ if every formula from $q$ is realised in $A$. We write $\Sdfs{x}{A}{B} \subseteq  S^x(B)$ for the subspace consisting of those types in $x$ which are finitely satisfiable in $A$. Recall that such types can be extended to global types in $\Sdfs{x}{A}{\U}$ (using ultrafilters). Note that $\Sdfs{x}{A}{B}$ is a closed (and hence compact) subspace of $S^x(B)$. As usual, omitting the $x$ means taking all types (allowing infinite (small) tuples).

\begin{fact}\cite[Lemma~3.3]{simon-decomposition} \label{fac:compressible iff weakly orthogonal}
  ($T$ arbitrary) The following are equivalent for $a,A$ and an $|A|^+$-saturated extension $(A',a) \succ (A,a)$:
  \begin{enumerate}
    \item $\tp(a/A)$ is compressible.
    \item For all $q(y) \in \Sfs{A}{A'}$, $\tp(a/A')$ (as a type in $x$) and $q(y)$ are weakly orthogonal.
    \item For all $q(y) \in \Sfs{A}{A'}$, $\tp(a/A')$ (as a type in $x$) and $q(y)$ imply a complete type in $xy$ over $\emptyset$.
  \end{enumerate}
\end{fact}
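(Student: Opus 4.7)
I would prove the equivalence via the cycle $(1) \Rightarrow (2) \Rightarrow (3) \Rightarrow (1)$. The direction $(2) \Rightarrow (3)$ is immediate, since any complete type over $A'$ restricts to a complete type over $\emptyset$.

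For $(1) \Rightarrow (2)$: First, by \cref{fac:compressiblity preserved under elem eq} applied to $(A', a) \succ (A, a)$, the type $\tp(a/A')$ is itself compressible. Fix $q \in \Sfs{A}{A'}$, an $\L$-formula $\phi(x, y, z)$, and $c \in A'^z$; I aim to show that $\phi(a, b, c)$ is determined by $q$ for every $b \vDash q$. Viewing $\phi$ as partitioned $\phi'(x; yz)$, I would apply compressibility of $\tp(a/A')$ along the parameter set $A^y \times \{c\} \subseteq (A')^{yz}$, which has size $|A|$: by \cref{def:compressible types prelim} each finite subset is handled by some $d \in A'$, and by the $|A|^+$-saturation of $(A', a)$ a single $d \in A'^w$ works for the whole infinite set. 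Thus there exist $\psi(x, w) \in \L$ and $d \in A'^w$ with $\psi(a, d)$ and $\psi(x, d) \vdash \phi(x, b_0, c)^{\epsilon(b_0, c)}$ for all $b_0 \in A^y$, where $\epsilon(b_0, c) = 1$ iff $\phi(a, b_0, c)$. Now set $\chi_\epsilon(y) := \forall x (\psi(x, d) \to \phi(x, y, c)^{\epsilon})$, a formula over $A'$. Compression forces $\chi_0(b_0) \vee \chi_1(b_0)$ for every $b_0 \in A^y$, so by finite satisfiability of $q$ in $A$ the disjunction $\chi_0 \vee \chi_1$ lies in $q$; by completeness, exactly one $\chi_\epsilon$ is in $q$. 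For any $b \vDash q$, $\chi_\epsilon(b)$ combined with $\psi(a, d)$ yields $\phi(a, b, c)^{\epsilon}$, establishing weak orthogonality.

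For $(3) \Rightarrow (1)$: I argue contrapositively. Suppose $\tp(a/A)$ is not compressible; then by \cref{fac:compressible in terms of elementary extension} there is a formula $\phi(x, y) \in \L$ such that for every $\psi(x, z) \in \L$ and $d \in A'^z$ with $\psi(a, d)$, the set $\psi(\U^x, d)$ fails to be $\phi(\cdot, b_0)$-homogeneous for some $b_0 \in A^y$. Consider the $\L$-formula
\[\chi(\psi, d, y) := \exists x_1 x_2\,\bigl(\psi(x_1, d) \wedge \psi(x_2, d) \wedge \phi(x_1, y) \wedge \neg \phi(x_2, y)\bigr),\]
and the partial type $\Pi(y) := \set{\chi(\psi, d, y)}{\psi(x, d) \in \tp(a/A')}$ over $A'$. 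Then $\Pi$ is finitely satisfiable in $A$: given finitely many $\chi(\psi_i, d_i, y)$, the conjunction $\bigwedge \psi_i(x, d_i)$ is itself a formula in $\tp(a/A')$, and non-compressibility provides $b_0 \in A^y$ witnessing non-homogeneity, which then witnesses all the $\chi(\psi_i, d_i, b_0)$ simultaneously (a smaller set being inhomogeneous forces the larger to be). Extend $\Pi$ to some complete $q \in \Sfs{A}{A'}$ and pick $b \vDash q$. For each $\psi(x, d) \in \tp(a/A')$, $\chi(\psi, d, b)$ holds, so $\psi(\U^x, d)$ contains some $a^*_\psi$ with $\phi(a^*_\psi, b)$ opposite to $\phi(a, b)$. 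By compactness (the partial type $\tp(a/A')(x) \cup \{\phi(x, b)^{1 - \phi(a, b)}\}$ is finitely consistent), there exists $a^* \in \U^x$ with $\tp(a^*/A') = \tp(a/A')$ and $\phi(a^*, b) \neq \phi(a, b)$. But then $(a, b)$ and $(a^*, b)$ both realise $\tp(a/A') \cup q(y)$ while disagreeing on the $\L$-formula $\phi(x, y)$, contradicting $(3)$.

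The main obstacle I anticipate is the parameter bookkeeping in $(1) \Rightarrow (2)$, specifically arranging that the compressing formula $\psi(x, d)$ has its parameter $d$ inside $A'$ rather than in a further saturated extension; this is the point where the stated $|A|^+$-saturation of $A'$ is essential (rather than the mere existence of some saturated $(A'', a) \succ (A', a)$), since only then can the $\L(A')$-formula $\chi_\epsilon$ be tested membership-in against the type $q$ directly.
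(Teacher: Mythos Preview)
Your proof is correct. The paper does not include its own proof of this statement; it records it as a Fact citing \cite[Lemma~3.3]{simon-decomposition}, so there is no in-paper argument to compare against. Your cycle $(1)\Rightarrow(2)\Rightarrow(3)\Rightarrow(1)$ is the natural one, and both nontrivial implications are handled cleanly: in $(1)\Rightarrow(2)$ you correctly use \cref{fac:compressiblity preserved under elem eq} to pass to $\tp(a/A')$ and then exploit $|A|^+$-saturation of $(A',a)$ to find a single compressing parameter $d\in A'$ handling all of $A^y\times\{c\}$; in $(3)\Rightarrow(1)$ the inhomogeneity argument and the observation that a superset of an inhomogeneous set remains inhomogeneous give exactly the finite satisfiability needed to produce the witnessing $q$.
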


\begin{corollary} \label{cor:orthFSComp}
  ($T$ arbitrary)
  A type $p(x) = \tp(a/A)$ is compressible if and only if
  there is some (possibly infinite) $d \subseteq  \U$ of length $|d| \leq  |T|$ such that
  \begin{itemize}\item $\tp(d/Aa)$ is finitely satisfiable in $A$, and
  \item for every $q \in \Sfs{A}{\U}$,
  \[q|_{Ad} \vdash  q|_{Aa}.\]
  \end{itemize}
\end{corollary}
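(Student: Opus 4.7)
The plan is to extract the small witness $d$ from an $|A|^+$-saturated elementary extension $(A',a) \succ (A,a)$ with $A' \subseteq \U$ (taken $(|A|+|T|)^+$-saturated for the converse, allowed since \cref{fac:compressible iff weakly orthogonal} holds for any such $A'$), using that Fact as the bridge.

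For the forward direction, assume $p$ is compressible. For each $\phi(x,y) \in \L$, the definition of compressibility makes the partial type
\[\{\psi_\phi(a,z_\phi)\} \cup \{\psi_\phi(x,z_\phi) \to \phi(x,b)^{\epsilon(b)} : b \in A^y\}\]
(where $\psi_\phi$ is the compression formula and $\epsilon(b) = 1 \iff \phi(x,b) \in p$) finitely satisfiable over $Aa$, so saturation of $(A',a)$ yields a realization $c_\phi \in A'$; let $d$ be the concatenation over $\phi$, so $|d| \leq |T|$. Property (i) is immediate from $(A',a) \succ (A,a)$. For (ii), given $q \in \Sfs{A}{\U}$ and a formula $\phi(a,y)^\epsilon \in q$ (with parameters from $A$ absorbed into $y$), consider $\theta(y) := \forall x \,(\psi_\phi(x,c_\phi) \to \phi(x,y)^\epsilon)$. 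If $\neg\theta \in q$, then finite satisfiability in $A$ yields $b \in A^y$ with $\phi(a,b)^\epsilon \wedge \neg\theta(b)$; the latter produces $a^* \in \U$ with $\psi_\phi(a^*,c_\phi) \wedge \phi(a^*,b)^{1-\epsilon}$, contradicting the compression implication $\psi_\phi(x,c_\phi) \vdash \phi(x,b)^{\epsilon(b)} = \phi(x,b)^\epsilon$ (the equality since $\phi(a,b)^\epsilon$ forces $\epsilon(b)=\epsilon$). So $\theta \in q|_{Ad}$, and instantiation at $x=a$ using the true sentence $\psi_\phi(a,c_\phi)$ gives $q|_{Ad} \vdash \phi(a,y)^\epsilon$.

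For the converse, assume (i) and (ii) hold for some $d \subseteq \U$. By (i) and the saturation of $A'$, realize $\tp(d/Aa)$ by some $d' \in (A')^z$; property (ii) transfers from $d$ to $d'$ via any $\sigma \in \Aut(\U/Aa)$ mapping $d \mapsto d'$ (noting $\sigma^{-1}(q) \in \Sfs{A}{\U}$ for $q \in \Sfs{A}{\U}$ since $\sigma$ fixes $A$). To establish \cref{fac:compressible iff weakly orthogonal}(3), take $q \in \Sfs{A}{A'}$ and realizations $b, b'$ of $q$; extend $\tp(b/\U), \tp(b'/\U)$ to $\tilde q, \tilde q' \in \Sfs{A}{\U}$. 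Since $d' \subseteq A'$, both restrict on $Ad'$ to $q|_{Ad'}$, and (ii) for $d'$ makes their $Aa$-restrictions coincide; hence $\phi(a,b) \iff \phi(a,b')$ for every $\phi(x,y) \in \L$, and $A'$-homogeneity promotes this to implication of a complete type in $xy$ over $\emptyset$ by $\tp(a/A') \cup q$.

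The main obstacle is the forward direction: converting the compression bound, a statement about $\phi$-instances over $A$, into uniform information about an arbitrary finitely satisfiable type over all of $\U$. The $\theta(y)$-device combined with finite satisfiability in $A$ accomplishes this, crucially exploiting that $\psi_\phi(x,c_\phi) \vdash \phi(x,b)^{\epsilon(b)}$ holds for all $b \in A^y$ simultaneously rather than only for those in some prespecified finite subset.
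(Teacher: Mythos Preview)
Your proof is correct and follows essentially the same approach as the paper's: construct $d$ from the compression witnesses (the paper handles the extra $A$-parameters via an explicit variable $w$ in $\phi(x,y,w)$, which is cleaner than your ``absorbed into $y$'', but your argument goes through once one reads it as indexing $c_\phi$ by $\L$-formulas $\phi(x,(y,w))$), and for the converse transport $d$ into $A'$ and invoke \cref{fac:compressible iff weakly orthogonal}(3). One phrasing slip: ``extend $\tp(b/\U),\tp(b'/\U)$'' is meaningless since these are already global types and need not be finitely satisfiable in $A$; what you need (and what the rest of your argument uses) is to extend $q=\tp(b/A')=\tp(b'/A')$ to some $\tilde q\in\Sfs{A}{\U}$, after which $q|_{Ad'}=\tilde q|_{Ad'}\vdash\tilde q|_{Aa}$ gives $b,b'\vDash\tilde q|_{Aa}$ and hence $\tp(ab/\emptyset)=\tp(ab'/\emptyset)$.
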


\begin{proof}
  Suppose $\tp(a/A)$ is compressible.
  Let $\phi(x,y,w)$ be an $\L$-formula.
  Then there is $\zeta(x,z)$ such that $\set{\bigvee_{\eps<2} \forall x(\zeta(x,z) \rightarrow  \phi(x,c',c)^\eps)}{c' \in A^y, c \in A^w } \cup \{ \zeta(a,z) \}$ is finitely satisfiable in $A$, so let $e_\phi$ realise a completion in $\Sdfs{z}{A}{A}$.
  If $q \in \Sdfs{y}{A}{\U}$ and $c \in A^w$, then $q|_{Ae_\phi}(y) \vdash  \bigvee_{\eps<2} \forall x (\zeta(x,e_\phi) \rightarrow  \phi(x,y,c)^\eps)$ by finite satisfiability, and $\vDash  \zeta(a,e_\phi)$, so
  $q|_{Ae_\phi}(y) \vdash  \phi(a,y,c)^\eps$ for some $\eps<2$.
  So $d := \sequence{e_\phi}{\phi(x,y,w) \in \L}$ is as required.

  The other direction follows from \cref{fac:compressible iff weakly orthogonal}(3$\Rightarrow$1), since by saturation we can assume $d \subseteq A'$.
\end{proof}

\section{Density of (local) compressibility}

Here we will prove that (local) compressible types are dense. In \cref{sec:infinite CCT} we prove an abstract version of this dealing with set systems of finite VC-dimension (generalising \cite[Lemma~4]{CCT} to infinite sets).
Then in \cref{sec:local compression} we deduce that locally compressible types are dense for NIP formulas, and in \cref{sec:density of compressible in ctbl NIP} we deduce that compressible types are dense in countable NIP theories.

\subsection{Compressibility for set systems of finite VC-dimension}\label{sec:infinite CCT}

Let $A$ be a (possibly infinite) set. As usual, $2^A$ is the (Hausdorff compact) space of functions $A\to 2 = \{0,1\}$ equipped with the product topology. Any $\C \subseteq 2^A$ naturally induces a set system on $A$ (those sets whose characteristic functions are in $\C$) and as such has a VC-dimension $\vc(\C)$. For $\C \subseteq 2^A$ and $B\subseteq A$, let $\C|_B := \set{c|_B}{c \in \C}$, the set of restrictions to $B$.

Let $\C \subseteq  2^A$.

\begin{definition} \label{def:compressible Boolean functions}
  \begin{itemize}\item For $B \subseteq  A$ and $c' \in \C|_B$, define the relativisation $\C_{c'} := \set{c \in \C}{c|_{B} = c'} = \set{c \in \C}{c \supseteq c'}$.

  \item For $c \in \C$ and $B,C \subseteq  A$, write
  $c|_B \vdash _\C c|_C$ to mean that $c'|_C = c|_C$ for any $c' \in \C$ with $c'|_B = c|_B$.

  \item For $k<\omega$, say $c \in \C$ is \defn{$k$-compressible in $\C$}
  if for any finite $A_0 \subseteq  A$ there exists $A_1 \subseteq  A$ with $|A_1| \leq  k$ such that $c|_{A_1} \vdash _\C c|_{A_0}$.

  \item Say $c$ is \defn{compressible} in $\C$ if it is $k$-compressible for some $k<\omega$.
  \end{itemize}
\end{definition}

\begin{remark}
  This terminology is originally inspired by, but does not precisely agree with, the terminology around compression schemes in the statistical learning literature.
\end{remark}

\begin{remark}\label{rem:cofinal finite colouring}
  Suppose $(P,\leq)$ is a directed partial order, and $c:P \to r<\omega$ is some colouring. Then there is some  subset $X\subseteq P$ which is monochromatic ($X \subseteq c^{-1}(i)$ for some $i<r$) and cofinal (for all $p\in P$ there is some $q \in X$ such that $q \geq p$).

  Indeed, if not, then for every $i<r$ there is some $p_i \in P$ such that $c(q) \neq i$ for all $q \geq p_i$. Let $p \geq p_i$ for all $i<r$. Then $c(p) \neq i$ for all $i<r$, contradiction.
\end{remark}

The proof of the following theorem is an adaptation to the case of infinite $A$ of the proof of \cite[Lemma~4]{CCT}, which proves it for finite $A$ with the same bound on $k$.

\begin{theorem} \label{thm:infinite CCT compression}
  For any $d<\omega$, let $\kc(d) := 2^{d+1}(d-2)+d+4$.
  For any set $A$, if $\C \subseteq  2^A$ is closed, non-empty, and has VC-dimension $\leq d$,
  then there exists $c \in \C$ which is $\kc(d)$-compressible in $\C$.
\end{theorem}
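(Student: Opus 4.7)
The plan is to adapt the combinatorial argument of \cite[Lemma~4]{CCT}, which treats the finite case, to arbitrary $A$, leveraging compactness of $\C$ as a closed subset of the compact Hausdorff space $2^A$.

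First I would invoke the finite CCT locally: for each finite $B \subseteq A$, the restriction $\C|_B \subseteq 2^B$ has VC-dimension at most $d$, so by \cite[Lemma~4]{CCT} there exists $\bar c_B \in \C|_B$ that is $\kc(d)$-compressible in $\C|_B$. A key observation is that the restriction map $\C \twoheadrightarrow \C|_B$ is surjective, so for $A_0, A_1 \subseteq B$ the relation $c|_{A_1} \vdash_{\C|_B} c|_{A_0}$ is equivalent to $c|_{A_1} \vdash_\C c|_{A_0}$; these local compressions are thus valid in the ambient $\C$.

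Next I would try to assemble the local compressions into a single $c \in \C$. For each finite $A_0 \subseteq A$, let
\[Y_{A_0} := \set{c \in \C}{\exists A_1 \subseteq A,\ |A_1| \leq \kc(d),\ c|_{A_1} \vdash_\C c|_{A_0}}.\]
Each $Y_{A_0}$ is an open subset of $\C$ (a union of basic clopens indexed by candidate $A_1$), and the family $\{Y_{A_0}\}$ has the finite intersection property: given $A_0^{(1)},\ldots,A_0^{(n)}$, set $A_0^* := \bigcup_i A_0^{(i)}$ and apply \cite[Lemma~4]{CCT} to $\C|_{A_0^*}$ to produce $\bar c \in \C|_{A_0^*}$ that is $\kc(d)$-compressible in $\C|_{A_0^*}$; then any lift of $\bar c$ to $\C$ lies in every $Y_{A_0^{(i)}}$. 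The desired conclusion is $\bigcap_{A_0} Y_{A_0} \neq \emptyset$.

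The main obstacle is that each $Y_{A_0}$ is only open, not closed, so compactness with FIP does not directly yield non-empty intersection. I would resolve this by inducting on $d$, mirroring CCT: the base case $d=0$ is trivial since then $|\C| \leq 1$. For the inductive step, if there exist $a \in A$ and $i \in \{0,1\}$ with $\vc(\set{c \in \C}{c(a)=i}) < d$, then the inductive hypothesis applied to that closed subclass yields a $\kc(d-1)$-compressible element, and appending $a$ to each witnessing $A_1$ gives $\kc(d-1)+1 \leq \kc(d)$-compressibility in $\C$. The remaining case, in which no single-coordinate restriction reduces VC-dimension, is where CCT's full quantitative argument (producing the $2^{d+1}(d-2)$ term in $\kc(d)$) is needed; its adaptation to the infinite setting goes through by verifying that each finitary combinatorial step of CCT, applied to the restrictions $\C|_B$, extends via compactness of $\C$ to yield a global element.
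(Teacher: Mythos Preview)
Your setup is correct: the sets $Y_{A_0}$ are open with the finite intersection property, and you rightly observe that this alone does not give $\bigcap_{A_0} Y_{A_0} \neq \emptyset$. The gap is in your proposed resolution. Your inductive case split---either some single-coordinate slice $\{c \in \C : c(a)=i\}$ has strictly smaller VC-dimension, or not---does not match the structure of the CCT argument, and the first case can be vacuous (e.g.\ for $\C$ the characteristic functions of intervals in $\Q$, fixing any single value leaves VC-dimension equal to $2$). All the content therefore lies in your ``remaining case,'' which you dispatch only by asserting that the finitary CCT steps ``extend via compactness.'' That is precisely what must be proved and is not automatic: the local witnesses $\bar c_B$ you produce for different finite $B$ need not cohere into a single $c \in \C$, and nothing in your outline forces coherence.

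The paper's argument is substantively different from a direct compactness lift of the finite result. In the inductive step one uses Zorn's lemma to find a \emph{maximal} $k_0$-compressible element $c' \in \C|_D$ (with $k_0 = 2^{d+1}d+1$ and $D \subseteq A$ possibly infinite; closedness of $\C$ is what guarantees that chains have upper bounds). Maximality, combined with a pigeonhole and cofinal-finite-colouring argument (\cref{rem:cofinal finite colouring}), forces $\vc(\C_{c'}) \leq d$: if some $(d{+}1)$-set $B$ were shattered in $\C_{c'}$, one could extend $c'$ by a suitably chosen $e \in 2^B$ while preserving $k_0$-compressibility, contradicting maximality. The inductive hypothesis then yields $c \in \C_{c'}$ that is $\kc(d)$-compressible in $\C_{c'}$; a further (now legitimate) compactness step shows $c$ is $(\kc(d)+k_0)$-compressible in $\C$, which gives the stated bound. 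The need to build $c'$ on a potentially infinite $D$ via Zorn is exactly why neither a single-coordinate reduction nor a naive gluing of finite witnesses suffices.
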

\begin{proof}
  The proof is by induction on $d$.

  If $\vc(\C)=0$, then $\C$ is a singleton $\{ c \}$, and $c$ is clearly $0$-compressible in $\C$.

  Suppose that $\vc(\C) = d+1 > 0$.

  \begin{claim}
    Let $k_0 := 2^{d+1}d+1$.
    There is $D \subseteq  A$ and $c' \in \C|_D$ which is $k_0$-compressible in $\C|_D$ such that $\vc(\C_{c'}) \leq  d$.
  \end{claim}
  \begin{proof}
    We may assume $|A| \geq  k_0$, as otherwise the result is immediate (take $D=A$ and any $c'\in \C$).

    Let $\S := \set{c' \in \C|_D}{D \subseteq  A,\; c' \text{\;is\;}k_0\text{-compressible in\;} \C|_D}$.
    Equip $\S$ with the partial order of inclusion, i.e., $\C|_{D_1} \ni c'_1 \leq  c'_2 \in \C|_{D_2}$ iff $D_1 \subseteq  D_2$ and $c'_2|_{D_2} = c'_1$.

    Then $\S$ is closed under unions of chains.
    Indeed, if $(c'_i)_{i\in I}$ is a chain with $c'_i \in \C|_{D_i}$, then the sets of extensions to $\C$, $\{ c' \in \C : c'|_{D_i} = c'_i \}$, form a chain $\mathcal{F}$ of closed non-empty subsets of $\C$; but $\C$ is closed in $2^A$ hence compact, so $\mathcal{F}$ has non-empty intersection. Hence $\bigcup_{i \in I} c'_i \in \C|_{\bigcup_{i \in I} D_i}$. Meanwhile, $\bigcup_{i \in I} c'_i$ is $k_0$-compressible since each $c'_i$ is.

   So by Zorn's lemma, $\S$ has a maximal element $c' \in \C_D$.

    We conclude by showing that $\vc(\C_{c'}) \leq  d$.

    Otherwise, $2^B \subseteq  \C_{c'}$ for some $B \subseteq  A$ with $|B| = d+1$. Note that $B \cap D = \emptyset$.
    We claim that there is $e \in 2^B$ such that $c' \cup e \in \S$, contradicting maximality of $c'$.

    Indeed, let $A_0 \subseteq_{\fin}  D\cup B$ and let $D_0 := A_0 \cap D$.
    Then there is $D_1 \subseteq  D$ with $|D_1| \leq  k_0$ such that $c'|_{D_1} \vdash _{\C_{D}} c'| _{D_0}$.
    In fact we may take $D_1$ with $|D_1| = k_0$, since $|D| \geq  k_0$ by maximality and the assumption that $|A| \geq  k_0$. 
    Since $\vc(\C) \leq  d+1 = |B|$ and $2^B \subseteq  \C_{c'}$, for each $a \in D_1$ there is $e_a \in 2^B$ such that $e_a \vdash _{\C} c'|_{\{a\}}$.
    By the choice of $k_0=2^{d+1}d+1$ and the pigeonhole principle, there exist $e \in 2^B$ and $E \subseteq  D_1$ such that $|E| = d+1$ and $e \vdash _{\C} c'|_E$.
    Let $A_1 := (D_1 \cup B) \setminus E$, so $|A_1| = |D_1| = k_0$.
    Then
    \begin{equation} \label{eqn:the point of e}
      (c' \cup e)|_{A_1} \vdash _\C (c' \cup e)|_{A_0} .
    \end{equation}

    In this way we obtain a $2^B$-colouring of the partial order of finite subsets of $D \cup B$, where each finite $A_0 \subseteq  D\cup B$ is coloured with an $e \in 2^B$ such that (\ref{eqn:the point of e}) holds for some $A_1 \subseteq  D\cup B$ with $|A_1| = k_0$. By \cref{rem:cofinal finite colouring} there is a cofinal monochromatic subset, yielding $e \in 2^B$ which is as required.
  \end{proof}

  Now by the induction hypothesis there is $c \in \C_{c'}$ which is $\kc(d)$-compressible in $\C_{c'}$.
  We conclude by showing that $c$ is $(\kc(d)+k_0)$-compressible in $\C$;
  this gives the stated bound, since $(2^{d+1}(d-2)+d+4) + (2^{d+1}d + 1) = 2^{(d+1)+1}((d+1)-2)+(d+1)+4$.

  So suppose $A_0 \subseteq_{\fin}  A$ and let $A_1 \subseteq  A$ be such that $|A_1| \leq  \kc(d)$ and $c|_{A_1} \vdash _{\C_{c'}} c|_{A_0}$.
  By compactness of $\C$ it follows that there is a finite subset $D_0 \subseteq  D$ such that $c|_{A_1} \vdash _{\C_{c'|_{D_0}}} c|_{A_0}$.

  Let $D_1$ be such that $|D_1| \leq  k_0$ and $c'|_{D_1} \vdash _{\C|_D} c'|_{D_0}$ (which exists as $c' \in \S$).
  Then $c|_{A_1\cup D_1} \vdash _{\C} c|_{A_0}$, as required.
\end{proof}

\begin{remark}
  For finite $A$, the exponential dependency of $\kc(d)$ on $d$ obtained in \cite{CCT} was improved to a quadratic dependency in \cite{k-isolationQuadratic}. Conjecturally it is even linear (see the introduction to \cite{k-isolationQuadratic}). The proof of this quadratic bound does not adapt so readily to the infinite case, and it would be interesting to find the best bound, and in particular to see whether \cref{thm:infinite CCT compression} holds with a quadratic bound.
\end{remark}

\subsection{Density of compressible local types}
\label{sec:local compression}

In the following definition, we use the notation $p\vdash_{\pi} q$ for $p \cup \pi \vdash q$, where $p,\pi$ are small partial types and $q$ a finite partial type (this is compatible with the notation in \cref{def:compressible Boolean functions} when $p,q$ are complete $\phi$-types, and $\C$ is the set of $\phi$-types consistent with $\pi$). As usual, we work in a complete $\L$-theory $T$.

\newcommand{\down}{\downarrow}
\begin{definition} \label{def:local compression}
  Fix a formula $\phi(x,y)$, a parameter set $A \subseteq  \U^y$ and a small partial type $\pi(x)$. Recall the notation $S_\phi^\pi(A)$ from \cref{sec:notations}.
  \begin{itemize}
    \item $p \in S_\phi^\pi(A)$ is \defn{$k$-compressible modulo $\pi$} if it is compressible in $S_\phi^\pi(A)$ considered as a (closed) subspace of $2^A$ as in \cref{def:compressible Boolean functions}: for any finite $A_0 \subseteq  A$ there is $A_1 \subseteq  A$ with $|A_1| \leq  k$ such that $p|_{A_1}(x) \vdash_{\pi} p|_{A_0}(x)$.

    \item $p \in S_\phi^\pi(A)$ is \defn{$\star$-compressible modulo $\pi$} if it is $k$-compressible modulo $\pi$ for some $k<\omega$.

    \item $p \in S_\phi(A)$ is \defn{$k$-} resp.\ \defn{$\star$-compressible} if it is $k$- resp.\ $\star$-compressible modulo $x=x$.
    \item $S_{\phi \down k}^\pi(A) \subseteq  S_\phi^\pi(A)$ (respectively $S_{\phi\down \star}^\pi(A)$) is the space of $k$-compressible (respectively $\star$-compressible) $\phi$-types modulo $\pi$. When $A \subseteq \U$, $S_{\phi \down k}^\pi(A) = S_{\phi \down k}^\pi(A^y)$.
  \end{itemize}
\end{definition}

\begin{remark}
  In terms of Borel complexity, if $A$ is countable then $S_{\phi \down \star}^\pi(A)$ is a $\mathbf{\Sigma^0_3}$-subset of $S_\phi^\pi(A)$: it is a countable union (going over all $k$) of countable intersections (going over all finite subsets of $A$) of countable unions (going over all subsets of $A$ of size $\leq k$) of clopen sets (the implication).
\end{remark}

\begin{corollary} \label{cor:localComp}
  Let $\phi(x,y)$ be a formula, $d \in \N$, $A \subseteq  \U^y$ and $\pi(x)$ a small partial type. Suppose that $\phi(x,y)$ is NIP and that $\vc^*(\phi) \leq d$. Let $k=\kc(d)$ be as in \cref{thm:infinite CCT compression}.
  \begin{enumerate}[(i)]\item $S_{\phi\down k}^\pi(A) \neq  \emptyset $.
  \item If $A' \subseteq  A$, then any $p' \in S_{\phi\down l}^\pi(A')$
  extends to some $p \in S_{\phi\down (l+k)}^\pi(A)$.
  \item $S_{\phi\down\star}^\pi(A)$ is dense in $S_\phi^\pi(A)$.
  \end{enumerate}
\end{corollary}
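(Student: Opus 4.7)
The overall plan is to identify $S_\phi^\pi(A)$ with a closed subspace of $2^A$ via the map $p \mapsto (a \mapsto [\phi(x,a) \in p])$, so that Theorem \ref{thm:infinite CCT compression} can be applied directly. The first step is to verify that, as a set system on $A$, $S_\phi^\pi(A)$ has VC-dimension at most $d$: if $\{a_0,\ldots,a_d\} \subseteq A$ were shattered, then realising in $\U^x$ each of the $2^{d+1}$ corresponding $\phi$-types together with $\pi$ would produce tuples $b_S$ such that $\{\phi(b_S,\cdot)\}_S$ shatters $\{a_0,\ldots,a_d\}$ in the sense of $\vc(\phiopp)$, contradicting $\vc^*(\phi) \leq d$. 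Given this bound, part (i) is immediate from Theorem \ref{thm:infinite CCT compression}.

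For (ii), I would apply Theorem \ref{thm:infinite CCT compression} not to $S_\phi^\pi(A)$ itself but to the closed, nonempty subspace $\mathcal{C} := \set{p \in S_\phi^\pi(A)}{p \supseteq p'}$, which still has VC-dimension $\leq d$; this supplies some $p \in \mathcal{C}$ which is $k$-compressible in $\mathcal{C}$. The nontrivial step is then to promote this to $(l+k)$-compressibility of $p$ in the ambient space $S_\phi^\pi(A)$. Given $A_0 \subseteq_{\fin} A$, the relative $k$-compressibility yields $A_1 \subseteq A$ with $|A_1| \leq k$ and $p|_{A_1} \cup p' \vdash_\pi p|_{A_0}$; compactness of the consequence relation then localises $p'$ to some $A_0' \subseteq_{\fin} A'$, i.e., $p|_{A_1} \cup p'|_{A_0'} \vdash_\pi p|_{A_0}$. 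Applying $l$-compressibility of $p'$ modulo $\pi$ to this $A_0'$ produces $A_1' \subseteq A'$ of size $\leq l$ with $p'|_{A_1'} \vdash_\pi p'|_{A_0'}$, and since $p$ extends $p'$ on $A'$, the set $A_1 \cup A_1'$ of size at most $l+k$ witnesses compressibility of $p$ at $A_0$. This compactness-and-bookkeeping step --- passing from relative compressibility inside $\mathcal{C}$ to absolute compressibility in $S_\phi^\pi(A)$ by trimming $p'$ to a finite piece --- is the main obstacle of the proof.

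Part (iii) then follows quickly: a nonempty basic open $U \subseteq S_\phi^\pi(A)$ is cut out by finitely many $\phi$-conditions with parameters in some $A' \subseteq_{\fin} A$, and the complete $\phi$-type $p'$ on $A'$ they determine is trivially $|A'|$-compressible modulo $\pi$ (one may take $A_1 = A_0$ for every input $A_0 \subseteq A'$). Applying (ii) then yields a $(|A'|+k)$-compressible extension $p \in U$, so that $U \cap S_{\phi \down \star}^\pi(A) \neq \emptyset$, giving density.
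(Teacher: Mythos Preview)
Your proof is correct and follows essentially the same approach as the paper. Your $\mathcal{C} = \set{p \in S_\phi^\pi(A)}{p \supseteq p'}$ is precisely the paper's $S_\phi^{\pi\cup p'}(A)$, and the compactness-plus-bookkeeping step in (ii) matches the paper's argument verbatim; you simply spell out the VC-dimension verification in (i) that the paper leaves implicit.
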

\begin{proof}
  \begin{enumerate}[(i)]
    \item This is immediate from \cref{thm:infinite CCT compression} by identifying $\C$ with $S^{\pi}_{\phi}(A)$.
    \item By (i), there is some $p \in S_{\phi\down k}^{\pi\cup p'}(A)$.
    Then if $A_0 \subseteq_{\fin}  A$, there is $A_1 \subseteq  A$ with $|A_1| \leq  k$ such that $p' \cup p|_{A_1} \vdash _\pi p|_{A_0}$. By compactness, there is a finite $A_0' \subseteq  A'$ such that $p'|_{A_0'} \cup p|_{A_1} \vdash _\pi p|_{A_0}$, and then by $l$-compressibility modulo $\pi$ of $p'$ we have $p'|_{A_1'} \vdash _\pi p'|_{A_0'}$ for some $A_1' \subseteq  A'$ with $|A_1'| \leq  l$. Then $p|_{A_1' \cup A_1} = p'|_{A_1'} \cup p|_{A_1} \vdash _\pi p|_{A_0}$. So $p$ is $(l+k)$-compressible modulo $\pi$.
    \item A basic open subset of $S_\phi^\pi(A)$ is of the form
    $S_\phi^{\pi\cup p'}(A)$ where $p' \in S_\phi^\pi(A')$ and $A' \subseteq_{\fin} A$.
    Clearly $p' \in S_{\phi\down|A'|}^\pi(A')$, so by (ii) there is $p \in S_{\phi\down(|A'|+k)}^\pi(A)$ extending $p'$.
  \end{enumerate}
\end{proof}

\begin{corollary} \label{cor:NIP iff k-compressible exists}
  The following are equivalent for a formula $\phi(x,y)$ and a partial type $\pi(x)$.
  \begin{enumerate}
    \item For some $\psi(x)$ such that $\pi \vdash \psi$, $\psi(x)\land \phi(x,y)$ is NIP.
    \item There exists $k<\omega$ such that for any set $A \subseteq \U^y$,  $S_{\phi\down k}^\pi(A) \neq  \emptyset$.
    \item For any set $A \subseteq \U^y$, there exists $k<\omega$ such that $S_{\phi\down k}^\pi(A) \neq  \emptyset$.
  \end{enumerate}
\end{corollary}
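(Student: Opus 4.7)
The plan is to prove $(1) \Rightarrow (2) \Rightarrow (3) \Rightarrow (1)$. The implication $(2) \Rightarrow (3)$ is immediate. For $(1) \Rightarrow (2)$, put $\phi'(x,y) := \psi(x) \wedge \phi(x,y)$ where $\psi$ witnesses (1). Since $\phi'$ is NIP, $d := \vc^*(\phi') < \infty$, and \cref{cor:localComp}(i) applied to $\phi'$ and $\pi$ gives $S_{\phi' \down k}^\pi(A) \neq \emptyset$ for $k := \kc(d)$ and every $A \subseteq \U^y$. As $\pi \vdash \psi$, the formulas $\phi(x,a)$ and $\phi'(x,a)$ are equivalent modulo $\pi$ for each $a$, so the obvious bijection $S_{\phi'}^\pi(A) \leftrightarrow S_\phi^\pi(A)$ preserves the relations $p|_{A_1} \vdash_\pi p|_{A_0}$ and hence $k$-compressibility modulo $\pi$, yielding (2) with the same $k$.

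For $(3) \Rightarrow (1)$, I would argue by contrapositive: assume that for every $\psi(x)$ with $\pi \vdash \psi$ the formula $\psi \wedge \phi$ has IP, and construct a single $A$ with $S_{\phi\down\star}^\pi(A) = \emptyset$. The main step is to build, by compactness, an infinite $A \subseteq \U^y$ every finite subset of which is shattered by realizations of $\pi$. Consider the partial type $\Gamma$ in fresh variables $(y_i)_{i<\omega}$ (each of sort $y$) together with fresh tuples $\tuple{x}_{F,S}$ (of sort $x$) for each pair $F \subseteq_{\fin} \omega$, $S \subseteq F$, asserting
\[ y_i \neq y_j \;\; (i \neq j), \qquad \pi(\tuple{x}_{F,S}), \qquad \phi(\tuple{x}_{F,S}, y_i) \;\; (i \in S), \qquad \neg\phi(\tuple{x}_{F,S}, y_i) \;\; (i \in F \setminus S). \]
A finite subset of $\Gamma$ involves only finitely many formulas drawn from the various $\pi(\tuple{x}_{F,S})$; substituted into a common variable $x$, their conjunction is a single formula $\psi(x)$ with $\pi \vdash \psi$. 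By hypothesis $\psi \wedge \phi$ has IP, so it shatters finite sets of any prescribed size; a shattering of size $|\bigcup F_j|$ (over the indices appearing) directly realizes the chosen finite subset. Compactness then supplies a realization, and setting $A := \{y_i : i<\omega\}$ gives the desired infinite shattered set.

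Finally, given such $A$ and any $k < \omega$, I would show no $p \in S_\phi^\pi(A)$ is $k$-compressible modulo $\pi$. Pick any $A_0 \subseteq A$ with $|A_0| = k+1$. For any $A_1 \subseteq A$ with $|A_1| \leq k$, choose $b \in A_0 \setminus A_1$, set $F := A_0 \cup A_1$, and let $S := \{b' \in A_1 : \phi(x,b') \in p\}$. Shattering of $F$ by realizations of $\pi$ produces $a_+, a_- \vDash \pi$ whose traces on $F$ are $S \cup \{b\}$ and $S \setminus \{b\}$; both have trace $S$ on $A_1$ and so satisfy $p|_{A_1}$, yet they disagree on $\phi(x,b)$. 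Hence $p|_{A_1} \nvdash_\pi p|_{A_0}$, so $p$ fails to be $k$-compressible modulo $\pi$; letting $k$ vary contradicts (3) for this $A$. The main obstacle is the compactness setup in the middle paragraph: it must simultaneously produce an infinite set of $y$-parameters and, for every finite subset and every sign pattern on it, a witnessing realization of $\pi$ — which is precisely the purpose of the indexing by $(F,S)$.
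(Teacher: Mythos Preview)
Your proof is correct and follows essentially the same approach as the paper's. For $(1)\Rightarrow(2)$ both arguments reduce to \cref{cor:localComp}(i) applied to $\psi\wedge\phi$ and transfer along the equivalence modulo $\pi$; for $\neg(1)\Rightarrow\neg(3)$ both build by compactness an infinite $A$ whose finite subsets are shattered by realisations of $\pi$, and then defeat $k$-compressibility by a direct combinatorial argument. The only cosmetic differences are that the paper claims shattering for \emph{all} $A'\subseteq A$ (via saturation of $\U$) and uses a single witness $b_{A'}$ that disagrees with $p$ everywhere outside $A_0$, whereas you work only with finite shattering and produce a pair $a_+,a_-$ disagreeing at one point; your version is slightly more explicit about the compactness step.
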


\begin{proof}
  (1) implies (2) is \cref{cor:localComp}(i) (any type in $S_{{\psi \land \phi}\down k}^\pi(A)$ naturally induces one in $S_{\phi\down k}^\pi(A)$) and (2) implies (3) is clear.

  $\neg$(1) implies $\neg$(3). By $\neg$(1), $\U^y$ is infinite. By compactness there is $A:=\set{a_i}{i<\omega} \subseteq \U^y$ such that for any $A' \subseteq A$, there is some $b_{A'} \vDash \pi$ such that $\phi(b_{A'},a)$ holds iff $a \in A'$ for any $a \in A$. Suppose $p \in  S_{\phi\down k}^\pi(A)$ for some $k <\omega$. Then for some $A_0 \subseteq A$ of size $\leq k$, $p|_{A_0} \vdash_\pi p|_{a_{\leq k}}$. But setting $A' := \{ a \in A_0 : \phi(x,a) \in p \} \cup \{ a \in A \setminus A_0 : \neg\phi(x,a) \in p \}$, we have $b_{A'} \vDash p|_{A_0}$; but $b_{A'} \not \vDash p|_a$ for any $a \in A \setminus A_0$, and $a_{\leq k} \not\subseteq A_0$ since $|A_0| \leq k$.
\end{proof}

This gives a new characterisation of NIP types.

\begin{definition}\label{def:NIP type}
  We say that a partial type $\pi(x)$ has \defn{IP} if there is a formula $\phi(x,y)\in\L$ which has IP as witnessed by realisations of $\pi$, i.e., if $\vc(\{ \phi(\pi(\U), a) : a \in \U^y\}) = \infty$.
  A formula or a partial type is \defn{NIP} if it does not have IP.
\end{definition}

By compactness we have that:
\begin{remark}\label{rem:NIP type in term of formula}
  A partial type $\pi(x)$ is NIP iff for every formula $\phi(x,y)$ there is a formula $\psi(x)$ implied by $\pi$ such that $\psi(x)\land \phi(x,y)$ is NIP (as a formula over $\U$).
\end{remark}

By \cref{cor:NIP iff k-compressible exists} and \cref{rem:NIP type in term of formula} we have:
\begin{corollary}
  A partial type $\pi(x)$ is NIP iff for every formula $\phi(x,y)$ and $A \subseteq \U^y$ there is $k<\omega$ such that $S_{\phi\down k}^\pi(A) \neq  \emptyset$.
\end{corollary}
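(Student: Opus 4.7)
The plan is to derive the statement directly from \cref{cor:NIP iff k-compressible exists} and \cref{rem:NIP type in term of formula}, treating each direction formula-by-formula.

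For the forward direction, assume $\pi(x)$ is NIP, and fix an arbitrary formula $\phi(x,y)$ together with a set $A \subseteq \U^y$. By \cref{rem:NIP type in term of formula}, there exists a formula $\psi(x)$ implied by $\pi$ such that $\psi(x) \land \phi(x,y)$ is NIP. This is exactly condition (1) of \cref{cor:NIP iff k-compressible exists} for the pair $(\phi,\pi)$. Invoking the implication (1)$\Rightarrow$(3) of that corollary then yields some $k<\omega$ with $S_{\phi\down k}^\pi(A) \neq \emptyset$, as required.

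For the converse, suppose the density-style compressibility property holds, and fix any formula $\phi(x,y)$. Then condition (3) of \cref{cor:NIP iff k-compressible exists} is satisfied for $(\phi,\pi)$, so the implication (3)$\Rightarrow$(1) of that corollary provides a formula $\psi(x)$ implied by $\pi$ such that $\psi(x) \land \phi(x,y)$ is NIP. Since $\phi$ was arbitrary, \cref{rem:NIP type in term of formula} now gives that $\pi$ is NIP.

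There is no real obstacle: the substantive content has already been packaged into \cref{cor:NIP iff k-compressible exists}, whose nontrivial direction rests on \cref{cor:localComp}(i) and hence ultimately on \cref{thm:infinite CCT compression}, while the other direction is a standard compactness-plus-pigeonhole extraction of an IP witness from the failure of $k$-compressibility. The only point to verify is the quantifier shuffle between ``for every $\phi$'' (in the definition of NIP for $\pi$) and the localisation to a single $\phi$ (in \cref{cor:NIP iff k-compressible exists}), which is immediate from how both equivalences quantify over $\phi$.
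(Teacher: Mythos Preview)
Your proof is correct and follows exactly the approach the paper takes: the corollary is stated immediately after \cref{rem:NIP type in term of formula} with the one-line justification ``By \cref{cor:NIP iff k-compressible exists} and \cref{rem:NIP type in term of formula} we have'', and you have simply unpacked this into its two directions.
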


\subsection{Density of compressible types in countable NIP theories} \label{sec:density of compressible in ctbl NIP}

Now we turn from local types to types.

\begin{definition} \label{def:global compression}
  Let $\pi(x) \subseteq  \pi'(x)$ be partial types over a parameter set $A \subseteq \U$ (perhaps contained in a collection of sorts).

  \begin{itemize}
    \item A formula $\zeta(x,z)$ \defn{compresses $\pi$ within $\pi'$ with respect to $A$} if for any finite $A_0 \subseteq  A$ there exists $a \in A^{z}$ such that
    \[ \pi'(x) \vdash  \zeta(x,a) \vdash  \pi|_{A_0}(x) .\]
    If $\pi'$ is clear from the context we omit it.

    \item $\pi$ is \defn{compressible within $\pi'$ with respect to $A$} if some $\zeta$ compresses $\pi$ within $\pi'$ with respect to $A$.

    \item $\pi$ is \defn{t-compressible}\footnote{The letter 't' stands for totally, thoroughly, or typewise.} with respect to $A$ if $\pi$ is compressible within $\pi$ with respect to $A$.
  \end{itemize}

  Let $p(x) \in S(A)$.
  \begin{enumerate}
    \item $p$ is \defn{compressible} if for each formula $\phi(x,y)$, the restriction $p \restriction \phi \in S_\phi(A)$ of $p$ to a $\phi$-type is compressible within $p$ with respect to $A$.

    \item $p$ is \defn{strongly compressible} if for each formula $\phi$  there exists a finite set of formulas $\Delta \ni \phi$ such that $p \restriction \Delta$ is t-compressible with respect to $A$.
  \end{enumerate}
\end{definition}

\begin{remark}
  Note that the definition above of a compressible type is the same as \cref{def:compressible types prelim}.
\end{remark}

\begin{remark}
  The reason we say ``with respect to $A$'' in the definition is because a partial type over $A$ is also a partial type over any set containing $A$. In the future we will usually omit this since $A$ will be clear from the context.
\end{remark}

\begin{remark} \label{rem:compressible iff finite restrictions}
  As we said in \cref{sec:notations}, we do not restrict ourselves to finitary types. Note that $p\in S^x(A)$ is compressible iff all of its restrictions to finite tuples of variables are compressible. 
\end{remark}

\begin{remark}
  The relations between these definitions and the definitions for $\phi$-types in \cref{def:local compression} are slightly subtle. In particular, for $A \subseteq \U^y$ and a $\phi$-type $p \in S_\phi(A)$, the condition that $p$ is $\star$-compressible (i.e., $k$-compressible for some $k$) is strictly stronger than the condition that $p$ is t-compressible. For example, in $\Th(\N;<)$, the non-realised $(x=y)$-type in $S_{x=y}(\N)$ is t-compressed by $x>z$, but is not $k$-compressible for any $k<\omega$.
\end{remark}

\begin{remark} \label{rem:passing to eq}
  Note that for a model $M$ and $p \in S(M)$, $p$ is (strongly) compressible iff its unique extension $p^{\eq}$ to $M^{\eq}$ is (strongly) compressible (by translating formulas in $\L^{\eq}$ to formulas in $\L$, see \cite[Lemma~1.1.4]{PillayGeometric}).
\end{remark}

\begin{lemma} \label{lem:adding one more formula}
  Let $\pi(x)$ be a t-compressible partial type over a set $A \subseteq \U^y$,
  and let $\phi(x,y)$ be an NIP formula.
  Then there exists $p_\phi \in S_\phi(A)$ such that $\pi \cup p_\phi$ is consistent and t-compressible.

  Moreover, there is a formula $\xi(x,w)$ which is a Boolean combination (depending only on $\vc^*(\phi)$) of instances of $\phi$ and equality such that if $\zeta(x,w')$ t-compresses $\pi$ (i.e., compresses $\pi$ within itself) then $\zeta(x,w') \wedge \xi(x,w)$ t-compresses $\pi \cup p_\phi$.
\end{lemma}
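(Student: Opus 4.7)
The plan is to apply \cref{cor:localComp}(i) to produce a $\phi$-type $p_\phi$ that is already locally $k$-compressible modulo $\pi$, and then exhibit a fixed formula $\xi$ which, via its parameters, can encode any sign pattern on the $k$-many $\phi$-instances arising from the compression. Combining $\zeta$ (compressing $\pi$) with an appropriate instance of $\xi$ (encoding the $\phi$-content of $p_\phi$ on the compressing set) will then t-compress $\pi \cup p_\phi$.

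Concretely, set $d := \vc^*(\phi)$ and $k := \kc(d)$. By \cref{cor:localComp}(i), pick $p_\phi \in S_{\phi\down k}^\pi(A)$; in particular $\pi \cup p_\phi$ is consistent, and for every finite $A_0 \subseteq A$ there is $A_1 \subseteq A$ with $|A_1| \leq k$ such that $p_\phi|_{A_1} \vdash_\pi p_\phi|_{A_0}$. Define
\[
\xi(x;\, w_0, \dots, w_{k-1}, u_0, \dots, u_{k-1}) \;:=\; \bigwedge_{i<k} \bigl(\phi(x, w_i) \leftrightarrow w_i = u_i\bigr),
\]
a Boolean combination of $k$ instances of $\phi$ and $k$ equalities whose shape depends only on $k$, and hence only on $\vc^*(\phi)$. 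The crucial feature is that for any $\bar a \in A^k$ and any sign pattern $\bar\epsilon \in \{0,1\}^k$, setting $b_i := a_i$ when $\epsilon_i = 1$ and $b_i \in A \setminus \{a_i\}$ otherwise makes $\xi(x, \bar a, \bar b)$ equivalent to $\bigwedge_i \phi(x, a_i)^{\epsilon_i}$.

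To check the moreover, let $\zeta(x, w')$ t-compress $\pi$ and let $A_0 \subseteq A$ be finite. Apply the $k$-compressibility of $p_\phi$ modulo $\pi$ to get $A_1 = \{a_0, \dots, a_{k-1}\} \subseteq A$ (padding by repetition if $|A_1| < k$) with $p_\phi|_{A_1} \vdash_\pi p_\phi|_{A_0}$, and by compactness a finite $A_0' \subseteq A$ with $\pi|_{A_0'} \cup p_\phi|_{A_1} \vdash p_\phi|_{A_0}$. Apply t-compressibility of $\pi$ to $A_0 \cup A_0'$ to obtain $c \in A^{w'}$ with $\pi \vdash \zeta(x, c) \vdash \pi|_{A_0 \cup A_0'}$, and let $\bar b \in A^k$ encode the sign pattern of $p_\phi$ on $A_1$ as above. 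Then $\pi \cup p_\phi \vdash \zeta(x, c) \wedge \xi(x, \bar a, \bar b)$, while
\[
\zeta(x, c) \wedge \xi(x, \bar a, \bar b) \;\vdash\; \pi|_{A_0 \cup A_0'} \cup p_\phi|_{A_1} \;\vdash\; (\pi \cup p_\phi)|_{A_0}.
\]
Hence $\zeta \wedge \xi$ t-compresses $\pi \cup p_\phi$, as required.

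The main obstacle, though minor, is the degenerate case $|A| \leq 1$: the sign encoding above requires finding $b_i \in A$ distinct from $a_i$. When $|A| \leq 1$ the partial type $\pi \cup p_\phi$ is essentially determined by $\pi$ together with at most one additional $\phi$-instance, so t-compressibility is immediate from that of $\pi$, and one can either modify $\xi$ (e.g.\ by appending a vacuous disjunct) or choose $p_\phi$ judiciously (when $\pi$ does not already decide the $\phi$-instance) so that $\xi(x, a_0, \dots, a_0)$ has the correct content.
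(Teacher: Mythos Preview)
Your proof is correct and follows essentially the same approach as the paper's: obtain $p_\phi \in S_{\phi\down k}^\pi(A)$ via \cref{cor:localComp}(i), encode any $k$-fold sign pattern on $\phi$-instances by a single Boolean combination of $\phi$ and equality, then chain the compactness step with the $\zeta$-compression of $\pi$. Your encoding $\bigwedge_{i<k}(\phi(x,w_i)\leftrightarrow w_i=u_i)$ is a minor variant of the paper's $\bigwedge_{j<k}(\phi(x,w^0_j)\leftrightarrow w^1_j=w^2_j)$ (you reuse the $\phi$-parameter in the equality rather than introducing two fresh witness variables), but the mechanism is identical; the paper likewise disposes of the $|A|\le 1$ case with a one-line remark.
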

\begin{proof}
  We may assume $|A| > 1$, as otherwise the result is clear.

  By \cref{cor:localComp}(i), there is $p_\phi \in S_{\phi\down k}^\pi(A)$ for some $k<\omega$ depending only on $\vc^*(\phi)$.
  By a coding of finitely many formulas as one as in the proofs of e.g.\ \cite[Theorem~II.2.12(1)]{Sh-CT} and \cite[Lemma~2.5]{MR2963018}, we obtain a formula $\xi(x,w)$ such that for any finite $A_0 \subseteq A$, there is $a \in A^{w}$ such that $p_\phi(x) \vdash  \xi(x,a)$ and $\pi \cup \{\xi(x,a)\} \vdash  p_\phi|_{A_0}$.
  Explicitly, we may take $\xi(x,w)$ with $w=\sequence{w^i_j}{i<3,j<k}$ to be
  $\bigwedge_{j<k} ( \phi(x,w^0_j) \leftrightarrow  w^1_j = w^2_j )$. Then, for any finite $A_0$, there is some $\sequence{\epsilon_j}{j<k} \in 2^k$ and $\sequence{c_j}{j<k} \in A^k$ such that $\bigwedge_{j<k} \phi(x,c_j)^{\epsilon_j} \vdash p|_{A_0}$. Let $d_0 \neq d_1 \in A$. For $j<k$, let $a^0_j = c_j$, $a^1_j = d_0$, and let $a^2_j = d_0$ if $\epsilon_i = 1$ and otherwise let $a^2_j = d_1$. Finally, let $a = \sequence{a^i_j}{i<3,j<k}$.

  Now assume that $\zeta(x,w')$ is as in the lemma and fix some finite set $A_0 \subseteq A$. Let $a \in A^{w}$ be as above. By compactness there is a finite $A_0' \subseteq  A$ such that $A_0 \subseteq A_0'$ and $\pi|_{A_0'} \cup \{\xi(x,a)\} \vdash  p_\phi|_{A_0}$,
  and so (by the assumption on $\zeta$) there is $a' \in A^{w'}$ such that $(\pi \cup p_\phi)(x) \vdash  (\zeta(x,a') \wedge \xi(x,a)) \vdash  (\pi \cup p_\phi)|_{A_0}$.
\end{proof}

\begin{corollary} \label{cor:density of comp in countable NIP}
  ($T$ countable NIP)
  Suppose $A \subseteq \U$ is a set of parameters and $x$ is a countable tuple of variables. Then, compressible types are dense in $S^x(A)$:

  If $\theta(x)$ is a consistent formula over $A$,
  then there exists a compressible type $p(x) \in S(A)$ with $p(x) \vdash  \theta(x)$.

  More generally, if $\pi(x)$ is a t-compressible partial type over $A$, then there exists a strongly compressible $p \in S(A)$ with $\pi \subseteq  p$.
\end{corollary}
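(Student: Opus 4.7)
The plan is to iteratively extend $\pi$ by applying \cref{lem:adding one more formula}, running through every partitioned formula. Since $T$ is countable and $x$ is a countable tuple, there are only countably many partitioned formulas $\phi(x,y)$ with $y$ a finite tuple from sorts of $T$; let $\zeta_0(x,z_0)$ t-compress $\pi$ and enumerate these formulas as $\sequence{\phi_n(x,y_n)}{n<\omega}$, arranging that $\zeta_0 = \phi_{n_0}$ for some $n_0 < \omega$. Set $\pi_0 := \pi$ and inductively $\pi_{n+1} := \pi_n \cup p_{\phi_n}$, where, since $\pi_n$ is t-compressible and $\phi_n$ is NIP (as $T$ is NIP), \cref{lem:adding one more formula} supplies a complete $\phi_n$-type $p_{\phi_n} \in S_{\phi_n}(A)$ making $\pi_{n+1}$ consistent and t-compressible. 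Let $p := \bigcup_{n<\omega} \pi_n$.

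Every $\L(A)$-instance of every formula in $x$ is an instance of some $\phi_n$ and hence decided by $p_{\phi_n} \subseteq p$, so $p$ is a complete type; moreover $\pi \subseteq p$. For plain compressibility, observe that for each $\phi = \phi_n$, the witnessing formula for t-compressibility of $\pi_{n+1}$ also compresses $p \restriction \phi_n = p_{\phi_n}$ within $p$, since $p \supseteq \pi_{n+1}$ and $\pi_{n+1}|_{A_0} \supseteq p_{\phi_n}|_{A_0}$.

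For strong compressibility, fix $\phi = \phi_n$ and set $m := \max(n, n_0)$ and $\Delta := \{\phi_0, \ldots, \phi_m\}$, which contains both $\phi$ and $\zeta_0$. I claim $p \restriction \Delta$ and $\pi_{m+1}$ are logically equivalent, so that t-compressibility of $\pi_{m+1}$ transfers to $p \restriction \Delta$. The direction $\pi_{m+1} \vdash p \restriction \Delta$ holds because $\pi_{m+1} \supseteq \bigcup_{i \leq m} p_{\phi_i}$ decides every $\Delta$-instance. For the reverse, it suffices to check $p \restriction \Delta \vdash \pi$: given $\phi(x,c) \in \pi$, t-compressibility of $\pi$ by $\zeta_0$ applied to $A_0 = \{c\}$ yields $a \in A^{z_0}$ with $\pi \vdash \zeta_0(x,a) \vdash \phi(x,c)$, so $\zeta_0(x,a) \in p_{\phi_{n_0}} \subseteq p \restriction \Delta$, whence $p \restriction \Delta \vdash \phi(x,c)$. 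The main obstacle is precisely this packaging of the $\pi$-information into a finite $\Delta$-type, handled by including $\zeta_0$ in the enumeration. The first statement of the corollary then follows from the general one by taking $\pi := \{\theta\}$, which is t-compressed by $\theta$ itself.
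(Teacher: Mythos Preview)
Your proof is correct and follows the same overall approach as the paper: enumerate the partitioned formulas, making sure the compressor $\zeta_0$ of $\pi$ appears in the list, and build $p$ by iterating \cref{lem:adding one more formula}.

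The one noteworthy difference is in how you extract \emph{strong} compressibility. The paper invokes the ``moreover'' clause of \cref{lem:adding one more formula} to track the shape of the compressing formula at each stage, ensuring it stays a Boolean combination of formulas from $\Delta_{i+1} = \{\phi_0,\ldots,\phi_i\}\cup\{x=y\}$; this directly yields that $\pi_{i+1}\restriction\Delta_{i+1}$ (and hence $p\restriction\Delta_{i+1}$) is t-compressible. You instead bypass the ``moreover'' clause entirely: since $\zeta_0 = \phi_{n_0} \in \Delta$, the original t-compression of $\pi$ lets you recover all of $\pi$ from $p\restriction\zeta_0 \subseteq p\restriction\Delta$, whence $p\restriction\Delta$ and $\pi_{m+1}$ are logically equivalent and t-compressibility transfers. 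This is a clean variant that uses only the basic statement of \cref{lem:adding one more formula}.
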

\begin{proof}
  Clearly it is enough to prove the ``more generally'' part, so assume $\pi$ is t-compressible and $\zeta$ compresses $\pi$ within $\pi$.

  Enumerate the formulas $\phi(x,y)$ as $\sequence{\phi_i(x,y_i)}{i<\omega}$ (where the $y_i$'s are finite), with $\phi_0 = \zeta$. For $i<\omega$, let $\Delta_i = \set{\phi_j}{j<i} \cup \{x=y\}$.
  Let $\pi_0 = \pi$.
  Recursively applying \cref{lem:adding one more formula}, let $p_{\phi_i} \in S_{\phi_i}(A)$ be such that $\pi_{i+1} := \pi_i \cup p_{\phi_i}$ is t-compressible, and moreover is compressed by a Boolean combination of formulas from $\Delta_{i+1}$.
  Then each $\pi_i\restriction \Delta_i$ is t-compressible,
  and so $p := \bigcup_{i<\omega} \pi_i$ is strongly compressible.
\end{proof}

For an example showing the necessity of the countability assumption, see \cref{rem:countability necessary} below.

\begin{remark} \label{rem:converses}
  It follows from \cref{cor:NIP iff k-compressible exists} that \cref{lem:adding one more formula} characterises $\phi$ being NIP (letting $\pi$ be the empty type). However, \cref{cor:density of comp in countable NIP} does not characterise NIP for countable theories. An easy example is $\Th(\N,+,\cdot)$, and in fact any theory with IP in which $\dcl(A)$ is a model for any set $A$ (given a consistent formula $\theta(x)$ over a set $A$, let $c \vDash \theta$ be in $\dcl(A)$, then $\tp(c/A)$ is compressible and even isolated).
\end{remark}

\begin{question}\label{que:weak compressibility}
  We could consider an apparently weaker notion of compressibility of a type: say $p \in S(A)$ is \defn{weakly compressible} if for any formula $\phi(x,y)$ there is some formula $\zeta(x,z)$ such that for any finite $A_0 \subseteq A$ there is some $d \in \U^z$ such that $p \vdash \zeta(x,d)$ and $\zeta(x,d)\vdash (p\restriction\phi)|_{A_0}$. Note that if the base $A$ is a model, then weak compressibility is equivalent to compressibility, but for general sets it is less clear. In \cref{exa:transitivity is false for ABA} below we will see that if $T$ is the theory of atomless Boolean algebras, this can fail. Is it true that if $T$ is NIP then $p$ is weakly compressible iff $p$ is compressible?


\end{question}

\section{Compressibility and stability} \label{sec:comp and stability}

Here we discuss compressibility in the context of stability, in both the local and global senses, and point out that compressibility is equivalent to l-isolation (see \cref{def:l-isolation}) in these contexts. The main results are:
\begin{itemize}
  \item For stable formulas, $k$-compressibility is equivalent to $k$-isolation (\cref{lem:stable Comp = Isol Local}).
  \item For stable types, compressibility is equivalent to l-isolation (\cref{lem:comp stable type}), and in particular when $T$ is stable these two notions are the same.
  \item For generically stable types, compressibility is equivalent to l-isolation (\cref{prop:generically stable compressible types are l-isolated}).
\end{itemize}

\subsection{Stable formulas}

Recall that a formula $\phi(x,y)$ is \defn{stable} if it does not have the order property: there are no $\sequence{a_i,b_i}{i<\omega}$ such that $\phi(a_i,b_j)$ holds iff $i<j$, and $\phi$ has \defn{the strict order property} (\defn{SOP}) if there is a sequence $\sequence{b_i}{i<\omega}$ such that $\sequence{\phi(\U^x,b_i)}{i<\omega}$ forms a strictly decreasing sequence of definable sets (with respect to containment). A theory $T$ is \defn{stable} if no formula has the order property. Clearly if $\phi$ is stable, it is NIP.

\begin{definition}
  Suppose $k<\omega$ and $p\in S_\phi(A)$ for some $A \subseteq \U^y$. Then $p$ is \defn{$k$-isolated} if for some $A_0 \subseteq A$ such that $|A_0|\leq k$,  $p|_{A_0} \vdash  p$.  $p$ is \defn{isolated} if it is $k$-isolated for some $k$ (this coincides with the usual topological definition).
\end{definition}

\begin{remark}\label{rem:k-comp + isolated => k-isol}
  Note that if $p \in S_\phi(A)$ is $k$-isolated then it is $k$-compressible. Also, if $p$ is $k$-compressible and isolated, then $p$ is $k$-isolated.
\end{remark}

The following says in particular that under stability, $k$-compressibility and $k$-isolation are the same.

\begin{lemma} \label{lem:stable Comp = Isol Local}
  Let $\phi(x,y)$ be NIP. Then the following are equivalent:
  \begin{enumerate}[(i)]
    \item $\phi$ is stable.
    \item For any $B \subseteq  \U^y$ and $p \in S_\phi(B)$ and $k \in \N$, if $p$ is $k$-compressible then $p$ is isolated (and hence $k$-isolated by \cref{rem:k-comp + isolated => k-isol}).
    \item For all $k<\omega $ and $\ee \in \{0,1\}^k$,
    \[\theta_{\ee}(x,\yy) := \bigwedge_{j<k} \phi(x,y_j)^{\eps_j}\]
    does not have the strict order property.
  \end{enumerate}
\end{lemma}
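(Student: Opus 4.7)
The plan is to establish the equivalences through the cycle (i) $\Rightarrow$ (iii) $\Rightarrow$ (ii) $\Rightarrow$ (iii) $\Rightarrow$ (i), reducing the work to the two substantial implications (iii) $\Rightarrow$ (ii) and its contrapositive (ii) $\Rightarrow$ (iii). The direction (i) $\Rightarrow$ (iii) is immediate: stability is preserved under Boolean combinations, so each $\theta_\ee$ is stable (as a partitioned formula in $(x,\yy)$) and in particular has no SOP. The direction (iii) $\Rightarrow$ (i) is the classical NIP fact---if $\phi$ is NIP with OP then some Boolean combination of $\phi$-instances has SOP, and this combination can be arranged to be a $\theta_\ee$---which closes the cycle but is not the novel content; I would invoke it with a reference to \cite{simon-NIP}.

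For (iii) $\Rightarrow$ (ii), I argue by contradiction. Suppose no $\theta_\ee$ has SOP, let $p \in S_\phi(B)$ be $k$-compressible, and assume $p$ is not isolated; by \cref{rem:k-comp + isolated => k-isol} it suffices to derive a contradiction. For each $A \subseteq_\fin B$ there is $b_A \in B$ with $p|_A \not\vdash p|_{b_A}$. I iteratively build a sequence $A^{(n)} \subseteq B$ of size $\leq k$ as follows: starting from any $A^{(0)}$ of size $\leq k$, given $A^{(n)}$ let $b^{(n)} := b_{A^{(n)}}$ and note that $p|_{A^{(n)} \cup \{b^{(n)}\}}$ is strictly stronger than $p|_{A^{(n)}}$; then by $k$-compressibility obtain $A^{(n+1)} \subseteq B$ of size $\leq k$ with $p|_{A^{(n+1)}} \vdash p|_{A^{(n)} \cup \{b^{(n)}\}}$. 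This yields a strict chain $p|_{A^{(n)}}(\U) \supsetneq p|_{A^{(n+1)}}(\U)$. Padding each $A^{(n)}$ to length exactly $k$ (by repeating entries with their signs from $p$), the formula $p|_{A^{(n)}}$ becomes $\theta_{\ee^{(n)}}(x, \bb^{(n)})$ for some $\ee^{(n)} \in \{0,1\}^k$ and $\bb^{(n)} \in B^k$. By pigeonhole on the finite set $\{0,1\}^k$, some fixed $\ee^*$ occurs for infinitely many $n$, producing an infinite strictly decreasing chain of sets $\theta_{\ee^*}(\U, \bb^{(n)})$---so $\theta_{\ee^*}$ has SOP, contradicting (iii).

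For (ii) $\Rightarrow$ (iii), I prove the contrapositive. Suppose $\theta_\ee$ with $\ee \in \{0,1\}^k$ has SOP, witnessed by parameters $(\bb_i)_{i < \omega}$ with $\theta_\ee(\U, \bb_i) \supsetneq \theta_\ee(\U, \bb_{i+1})$. Let $B \subseteq \U^y$ collect the components of all the $\bb_i$, and (by saturation of $\U$) choose $a^* \in \bigcap_{i<\omega} \theta_\ee(\U, \bb_i)$; set $p := \tp_\phi(a^*/B)$. For $k$-compressibility, given finite $A_0 \subseteq B$ pick $i^*$ with $A_0$ contained among the components of $\bb_0, \ldots, \bb_{i^*}$, and take $A_1$ to be the $k$ components of $\bb_{i^*}$: then $p|_{A_1} = \theta_\ee(x, \bb_{i^*})$, and by the SOP containments $\theta_\ee(\U, \bb_{i^*}) \subseteq \theta_\ee(\U, \bb_i)$ for $i \leq i^*$, so $p|_{A_1}$ implies each $\phi(x, b)^{\eps}$ appearing in $p|_{A_0}$. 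For non-isolation, given any candidate finite isolator $A^* \subseteq B$, choose $N$ with $A^*$ among the components of $\bb_0, \ldots, \bb_N$ and pick $c \in \theta_\ee(\U, \bb_N) \setminus \theta_\ee(\U, \bb_{N+1})$ (existing by strict SOP); then $\tp_\phi(c/B)$ agrees with $p$ on $A^*$ but differs on some component of $\bb_{N+1}$, contradicting isolation. The main obstacle is the classical (iii) $\Rightarrow$ (i) step: an OP-indiscernible $(a_i,b_i)$ for $\phi$ in the ``incomparable'' case (where $\phi(\U, b_i)$ and $\phi(\U, b_j)$ are $\subseteq$-incomparable) does not directly produce a $\theta_\ee$-chain, and extracting one requires the NIP alternation bound together with Ramsey-style extraction from an appropriately chosen Morley-style sequence.
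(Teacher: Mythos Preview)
Your proposal is correct and follows essentially the same route as the paper. Both arguments establish (i)$\Leftrightarrow$(iii) by the standard facts (Boolean combinations of stable formulas are stable; NIP $+$ OP yields SOP for some $\theta_{\ee}$, citing \cite[Theorem~2.67]{simon-NIP}), and both prove (ii)$\Leftrightarrow$(iii) via the same constructions: from a $k$-compressible non-isolated $p$ build a strictly decreasing chain of $\theta_{\ee^{(n)}}$-instances and pigeonhole on $\ee$, and from an SOP witness for $\theta_{\ee}$ extract the $\phi$-type determined by $\bigcap_i \theta_{\ee}(\U,\bb_i)$ and verify it is $k$-compressible but not isolated. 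Your write-up is slightly more explicit in places (padding to length $k$, the choice of $a^*$ and $c$), but the ideas coincide.
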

\begin{proof}

  (i) implies (iii) as a Boolean combination of stable formulas is stable (see e.g., \cite[Lemma~2.1]{PillayGeometric}) and the strict order property implies the order property.

  $\neg$(i) implies  $\neg$(iii) by the proof of \cite[Theorem~2.67]{simon-NIP} and the subsequent remark.

  $\neg$(ii) implies $\neg$(iii): let $p \in S_\phi(B)$ be $k$-compressible but not $k$-isolated.

  Inductively we find $\bb^i \in B^k$ and $\ee^i \in \{0,1\}^k$ for $i < \omega$ such that we have $p(x) \vdash  \theta_{\ee^i}(x,\bb^i)$ for all $i$,
  and $\theta_{\ee^i}(x,\bb^i) \vdash  \theta_{\ee^j}(x,\bb^j)$ iff $i \geq  j$.
  (Given $i$, since $\theta_{\ee^i}(x,\bb^i) \not \vdash p$, there is $b \in B$ such that $\theta_{\ee^i}(x,\bb^i) \not \vdash p|_b$. Let $\theta_{\ee^{i+1}}(x,\bb^{i+1}) \vdash p|_{\bb^ib}$ be from $p$.)
  But then some $\ee$ occurs infinitely often, and then $\theta_{\ee}(x,\yy)$ has SOP.

  $\neg$(iii) implies  $\neg$(ii):
  suppose $\bb^i \in \U^{\yy}$ for $i<\omega$ and
  $\theta_{\ee}(x,\bb^i) \vdash \theta_{\ee}(x,\bb^j)$ iff $i \geq  j$.
  Let $B = \set{b^i_j}{i<\omega,\; j<k} \subseteq  \U^y$, and note that $\set{\theta_{\ee}(x,\bb^j)}{j<\omega}$ implies a complete type $p \in S_\phi(B)$.
  Then $p$ is $k$-compressible but not isolated.
\end{proof}

\subsection{Stable types and theories}

\begin{definition} \label{def:stable type}
  A partial type $\pi(x)$ over $A$ is \defn{stable} if every extension $p\in S(B)$, over every $B\supseteq A$, is definable.
\end{definition}

It is well-known that $T$ is stable if and only if every type is definable (see e.g., \cite[Corollary 8.3.2]{TentZiegler}), so $T$ is stable if and only if every partial type is stable. For more on stable types (including equivalent definitions), see \cite{Gen,MR4216280}. We will use the following equivalence:

\begin{fact}\cite[Remark~2.6]{MR4216280}\label{fac:stable types}
 The following are equivalent for a partial type $\pi(x)$:
 \begin{enumerate}
  \item $\pi$ is stable.
  \item For every formula $\phi(x,y)$ there is a formula $\psi(x)$ implied by $\pi$ such that $\phi(x,y) \land \psi(x)$ is stable (as a formula over $\U$).
 \end{enumerate}
 \end{fact}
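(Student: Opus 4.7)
The plan is to prove each direction separately. The direction (2)$\Rightarrow$(1) follows from definability of types of stable formulas, while (1)$\Rightarrow$(2) proceeds by contrapositive: starting from a $\phi$ for which no $\psi$ implied by $\pi$ renders $\phi\wedge\psi$ stable, construct a non-definable complete extension of $\pi$ via a cut in an indiscernible order-sequence of realizations of $\pi$.

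For (2)$\Rightarrow$(1), I would take any extension $p \in S(B)$ of $\pi$ with $B \supseteq A$ and any formula $\phi(x,y)$, and use (2) to pick $\psi(x)$ implied by $\pi$ with $\phi\wedge\psi$ stable. Since $\psi \in p$, one has $\phi(x,b)\in p$ iff $(\phi\wedge\psi)(x,b)\in p$ for every $b \in B^y$, so the $\phi$-type of $p$ agrees as a subset of $B^y$ with its $(\phi\wedge\psi)$-type. The latter is definable over $B$ by the standard fact that every type of a stable formula is definable, hence so is the former. Varying $\phi$ shows $p$ is definable.

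For (1)$\Rightarrow$(2), suppose that some $\phi(x,y)$ satisfies: for every $\psi(x)$ with $\pi\vdash\psi$, the formula $\phi(x,y)\wedge\psi(x)$ has the order property. The first key step is a compactness argument producing a sequence $(a_i, b_i)_{i \in I}$ in $\U$, indexed by a dense linear order $(I,<)$ of arbitrary cardinality, with each $a_i\vDash\pi$, with $\phi(a_i,b_j)$ iff $i<j$, and indiscernible. Indeed, for each $\pi_0 \subseteq_{\fin}\pi$, instability of $\phi(x,y)\wedge\bigwedge\pi_0(x)$ yields such an $\omega$-sequence with $a_i\vDash\pi_0$, so the partial type asserting the existence of an $\omega$-sequence with all $a_i\vDash\pi$ is finitely satisfiable; by compactness one obtains such a sequence, and by Ramsey plus stretching one may assume arbitrary length and indiscernibility.

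For each Dedekind cut $C$ of $I$ the partial type $\pi(x) \cup \{\neg\phi(x,b_i) : i \in C\} \cup \{\phi(x,b_i) : i \in I\setminus C\}$ is consistent by finite satisfaction in the $a_k$'s (the $\phi$-part from $\phi(a_k, b_j) \Leftrightarrow k<j$), hence extends to a complete type $p_C \in S(B)$ with $B := \{b_i : i \in I\}$. Distinct cuts yield distinct $\phi$-type fibers $\{b\in B : \phi(x,b)\in p_C\}$, producing $2^{|I|}$ pairwise distinct extensions of $\pi$ to $B$. If (1) held each $p_C$ would have a $\phi$-definition given by an $\L(B)$-formula in $y$, of which there are only $|B|+|\L|$ many; choosing $|I|$ with $2^{|I|} > |I|+|\L|$ yields a contradiction. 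The main obstacle is the compactness step: one must ensure the order-property witnesses $a_i$ realize the full $\pi$, not merely finite fragments, since the hypothesis only provides $\psi$-dependent witnesses. Once this is arranged, the cut construction and the cardinality count are standard.
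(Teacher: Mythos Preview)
The paper does not supply its own proof of this statement; it is recorded as a fact with a citation to \cite[Remark~2.6]{MR4216280}. Your argument is the standard one and is essentially correct, so there is no alternative approach in the paper to compare against.

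Two imprecisions are worth flagging. First, in the contrapositive of $(1)\Rightarrow(2)$ you set $B := \{b_i : i \in I\}$, but the definition of stability of $\pi$ (a partial type over $A$) concerns extensions to sets $B \supseteq A$; you should take $B := A \cup \{b_i : i \in I\}$, which also means the final count of $\L(B)$-formulas is $|A|+|I|+|\L|$ rather than $|I|+|\L|$. Second, and more to the point, an arbitrary dense linear order $I$ need not have $2^{|I|}$ Dedekind cuts (for instance $\mathbb{R}$ has only $|\mathbb{R}|$ cuts), so the sentence ``producing $2^{|I|}$ pairwise distinct extensions'' is not justified as written. The fix is routine: for any infinite $\lambda$ there is a linear order of size $\lambda$ with strictly more than $\lambda$ cuts (take $\kappa$ least with $2^\kappa>\lambda$ and linearly order the binary tree $2^{<\kappa}$ so that branches correspond to cuts); choosing $\lambda \geq |A|+|\L|$ and such an $I$ of size $\lambda$ makes the counting go through. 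With these adjustments your proof is complete.
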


Under stability, the analogue of compressibility of a type is l-isolation.

 \begin{definition}\label{def:l-isolation}
  A type $p(x) \in S(A)$ is \defn{l-isolated} if for each formula $\phi(x,y)$ there is $\zeta \in p$ with $\zeta \vdash  p \restriction \phi$.
\end{definition}

Clearly, an l-isolated type is compressible. By considering the formula $x\neq y$, we easily obtain:

\begin{remark} \label{rem:l-isol is realised}
  Any l-isolated type over a model is realised.
\end{remark}

The following is analogous to (but not actually comparable with) \cref{lem:stable Comp = Isol Local}.

\begin{lemma} \label{lem:comp stable type}
  \begin{enumerate}[(i)]\item
  Suppose $p \in S(A)$ is compressible but not l-isolated. Then:
  \begin{enumerate}
    \item There are tuples $a_i,b_i$ in $A$ which witness the order property for some $\L$-formula.
    \item $p$ is not stable.
  \end{enumerate}
  In particular, if $T$ is stable then any compressible type is l-isolated.
  \item ($T$ countable NIP) $T$ is stable iff any compressible type is l-isolated, iff there is some $\omega$-saturated model $M$ such that every strongly compressible type over $M$ is l-isolated.
  \end{enumerate}
\end{lemma}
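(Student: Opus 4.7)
The plan is to prove (i) by a Shelah-style iterative construction that exploits a conjunctive strengthening of the non-l-isolation hypothesis, and to derive (ii) by combining (i) with density of compressible types in countable NIP theories (\cref{cor:density of comp in countable NIP}).

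For (i)(a), fix $\phi(x,y)$ witnessing non-l-isolation of $p$ and let $\psi(x,z)$ compress $p\restriction\phi$ within $p$. I construct $(c_i)_{i<\omega}$ and $(a_i)_{i<\omega}$ in $A$ recursively: at stage $n$, compressibility yields $c_n\in A^z$ with $\psi(x,c_n)\in p$ and $\psi(x,c_n)\vdash(p\restriction\phi)|_{\{a_0,\ldots,a_{n-1}\}}$; then, since $\bigwedge_{i\leq n}\psi(x,c_i)\in p$ does not imply $p\restriction\phi$ by non-l-isolation, pick $a_n\in A^y$ with $\bigwedge_{i\leq n}\psi(x,c_i)\not\vdash p|_{a_n}$. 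The key subtlety is that a single realisation of $\bigwedge_{i\leq n}\psi(x,c_i)\wedge\neg p|_{a_n}$ \emph{simultaneously} witnesses $\psi(x,c_i)\not\vdash p|_{a_n}$ for every $i\leq n$. After passing to a subsequence on which $\phi(x,a_i)^{\eps}\in p$ has constant polarity (WLOG $\phi(x,a_i)\in p$), the $\L$-formula $\Theta(z;y):=\forall x\,(\psi(x,z)\to\phi(x,y))$ satisfies $\Theta(c_j;a_i)\Leftrightarrow i<j$: the forward direction is the compressibility of $c_j$, and for $j\leq i$ the simultaneous witness chosen for $a_i$ refutes $\psi(x,c_j)\vdash\phi(x,a_i)$. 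This is OP of $\Theta$ witnessed in $A$.

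For (i)(b), I show that $\psi(x,z)$ itself already witnesses non-stability of $p$: for every $\psi^*\in p$, the formula $\psi(x,z)\wedge\psi^*(x)$ has OP, which by \cref{fac:stable types} implies $p$ is not stable. Fix $\psi^*\in p$ and rerun the construction with $\zeta(x,z):=\psi^*(x)\wedge\psi(x,z)$ in place of $\psi$ (still a compressor, since $\psi^*\in p$). The simultaneous witnesses become $\beta_k\models\psi^*(x)\wedge\bigwedge_{i\leq k}\psi(x,c_i)\wedge\neg\phi(x,a_k)$. I claim $\psi(\beta_k,c_l)\Leftrightarrow l\leq k$: the direction $l\leq k$ is by construction, and if $l>k$ then compressibility at stage $l$ gives $\psi(x,c_l)\vdash\phi(x,a_k)$, so $\beta_k\models\psi(x,c_l)$ would force $\phi(\beta_k,a_k)$, contradicting $\beta_k\models\neg\phi(x,a_k)$. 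As $\psi^*(\beta_k)$ always holds, the pairs $(\beta_k,c_l)$ witness OP of $\psi\wedge\psi^*$.

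For (ii), ``$T$ stable $\Rightarrow$ every compressible type is l-isolated'' is the contrapositive of (i)(b) (every type in a stable theory is stable), and this immediately yields the existence of an $\omega$-saturated $M$ with every strongly compressible type over $M$ l-isolated. The remaining implication reduces, by contraposition, to exhibiting for any $\omega$-saturated $M$ of an unstable theory a strongly compressible non-l-isolated type over $M$. Since unstable NIP theories have SOP, fix a formula $\theta(x,y)$ and a strictly decreasing chain $\theta(\U,c_0)\supsetneq\theta(\U,c_1)\supsetneq\cdots$ realised in $M$ by $\omega$-saturation. The partial type $\pi(x):=\{\theta(x,c_i):i<\omega\}$ is consistent and t-compressible via $\zeta(x,z):=\theta(x,z)$ (as $\theta(x,c_N)\vdash\theta(x,c_i)$ for $i\leq N$ along the chain), so by \cref{cor:density of comp in countable NIP} it extends to a strongly compressible $p\in S(M)$. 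A standard argument — any $\chi\in p$ over a finite subset of $M$ would be realised in $M$ by $\omega$-saturation, and such a realisation would witness $\pi$ in $M$, which is impossible for a suitably chosen chain — shows that $p$ is not l-isolated with respect to $\theta$, contradicting the assumption on $M$. The main obstacle in carrying out the plan is the care required in the construction for (i)(a)--(b) to ensure that the \emph{conjunctive} non-l-isolation (producing a single witness $\beta_n$ valid against all previous $c_i$) yields a clean order-property pattern on the nose, with no need for Ramsey extraction beyond the polarity step.
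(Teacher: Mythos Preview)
Your arguments for (i)(a) and (i)(b) are correct and essentially identical to the paper's. The only cosmetic difference in (i)(a) is that the paper sets $\theta(y,z)=\bigvee_{\epsilon<2}\forall x\,(\zeta(x,z)\to\phi(x,y)^\epsilon)$, absorbing both polarities at once and thereby avoiding your pass-to-a-subsequence step; in (i)(b) the paper runs the same recursive construction you describe, with the same conclusion via \cref{fac:stable types}.

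For (ii) your overall strategy matches the paper's (SOP, a t-compressible partial type extended via \cref{cor:density of comp in countable NIP} to a strongly compressible completion, then non-l-isolation), but the final step has a gap. You assert that the chain can be ``suitably chosen'' so that $\bigcap_i\theta(M,c_i)=\emptyset$, yet the bare SOP hypothesis on $\theta$ does not guarantee any such chain: nothing prevents every instance $\theta(\U,c)$ from containing a common point of $M$, in which case your $\pi$ is realised in $M$ and the argument collapses. The paper sidesteps this by passing from the SOP formula to the $\emptyset$-definable preorder it induces on a definable set $D$, taking a \emph{maximal} chain $C\subseteq D(M)$, and using the type of a cut in $C$: maximality forces the cut to be unrealised in $M$, so its strongly compressible completion is unrealised and hence not l-isolated by \cref{rem:l-isol is realised}. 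This maximal-chain manoeuvre is the missing ingredient in your sketch.
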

\begin{proof}

  For both (i.a) and (i.b), suppose $\phi(x,y)$ witnesses that $p$ is not l-isolated and $\zeta(x,z)$ compresses $p \restriction \phi$.

  (i.a) Let $\theta(y,z) = \bigvee_{\epsilon<2} \forall x (\zeta(x,z) \rightarrow  \phi(x,y)^\epsilon)$. We recursively construct $a_i \in A^y$ and $b_i \in A^z$ for $i < \omega$, such that $\vDash  \theta(a_i,b_j) \Leftrightarrow  i<j$:
  if $a_{<i}$ and $b_{<i}$ are already defined, let $b_i$ be such that $p \ni \zeta(x,b_i) \vdash  (p \restriction \phi)|_{a_{<i}}(x)$, and let $a_i$ be such that $\zeta(x,b_j) \not\vdash  (p \restriction \phi)|_{a_i}(x)$ for all $j\leq i$, which exists since $\bigwedge_{j\leq i} \zeta(x,b_j) \not\vdash  (p \restriction \phi)(x)$.

  (i.b) For $\psi(x) \in p$, we show that $\theta(x,z):=\zeta(x,z)\land \psi(x)$ has the order property by recursively constructing $\sequence{a_i, b_i, c_i}{i<\omega}$ such that  $\vDash \theta(a_i,b_j)$ iff $i \geq j$ and $\bigwedge_{j < i} \theta(x,b_j) \in p$ and $\vDash \phi(a_i,c_i) \Leftrightarrow \phi(x,c_i) \notin p$. This is enough by \cref{fac:stable types}. Suppose we found $\sequence{a_j,b_j,c_j}{j<i}$. Let $b_i$ be such that $p \ni \zeta(x,b_i) \vdash  (p \restriction \phi)|_{c_{<i}}(x)$. Since $\bigwedge_{j \leq i}\theta(x,b_j)$ does not isolate $p\restriction \phi$, there are some $a_i,c_i$ such that $\vDash \bigwedge_{j \leq i} \theta(a_i,b_j)$, $\phi(x,c_i)^{\epsilon} \in p$ for some $\epsilon<2$, and $\neg \phi(a_i,c_i)^{\epsilon}$ holds.

  (ii) The implications from left to right follow by (i) and trivially, respectively. For the other direction, assume that $T$ is not stable.
  By \cite[Theorem~2.67]{simon-NIP}, $T$ has the SOP.
  So say $<$ is an $\emptyset$-definable (strict) preorder on an $\emptyset$-definable set $D$ with infinite chains.

  Let $M$ be an $\omega$-saturated model. So $M$ contains an infinite chain $C$ which we may assume is maximal. Since $C$ is infinite, we can write $C = C_1 + C_2$ where either $C_1$ has no last element or $C_2$ has no first element (one of them may be empty).

  Let $\pi(x)$ be the unique type in $<$ over $C$ corresponding to the cut $(C_1,C_2)$ (i.e., $\pi(x)$ is determined by $\set{c_1<x}{c_1 \in C_1}\cup \set{x<c_2}{c_2 \in C_2}$). 
  Now, $\pi$ is t-compressible (e.g., if both $C_1,C_2$ are nonempty, then $z<x<w$ compresses $\pi$ within $\pi$ and if $C_1$ is empty then $x<z$ compresses $\pi$ within $\pi$). Hence by \cref{cor:density of comp in countable NIP}, $\pi$ has a strongly compressible completion $q \in S(M)$.
  By maximality of $C$, $\pi$ is not realised in $M$, so neither is $q$.
  So by \cref{{rem:l-isol is realised}}, $q$ is not l-isolated.

  (Note that we could have worked with a partial order instead of a preorder by passing to eq and using \cref{rem:passing to eq}.) 
\end{proof}

\begin{remark}\label{rem:enough infinite chain}
  From the proof of \cref{lem:comp stable type}(ii) it follows that if $M \vDash T$ contains an infinite chain in an $M$-definable preorder $D$ then there is $c\in D$ such that $\tp(c/M)$ is compressible but not l-isolated.
\end{remark}

\begin{remark}
  From \cref{cor:density of comp in countable NIP}, it follows that l-isolated types are dense in stable theories, but this was well-known and follows easily by 2-rank considerations, see \cite[Lemma~IV.2.18(4)]{Sh-CT}.
\end{remark}

\begin{remark} \label{rem:countability necessary}
  The following example demonstrates the necessity of the countability assumption on $T$ in \cref{cor:density of comp in countable NIP} even for stable theories.

  Let $\kappa$ be a cardinal, and consider $\kappa$ colourings on a set $X$, with each colouring using the same colours, such that no point gets the same colour according to two different colourings, but apart from this restriction all possibilities are realised. We can formalise this in the language with a sort $X$, a sort $C$ for the colours, and for each $i \in \kappa$ a function $f_i : X \rightarrow  C$ giving the colour of an element according to the $i$-th colouring. The theory is axiomatised by saying there are infinitely many colours and, for each finite set $\{i_1,\ldots ,i_n\} \subseteq  \kappa$ enumerated without repetitions and each $m \geq  0$, an axiom
  \begin{align*} \forall c_1,\ldots ,c_n \in C\; \forall x_1,\ldots ,x_m \in X\;
  (& \exists x \in X\; ( \bigwedge_{j=1}^n f_{i_j}(x) = c_j \wedge \bigwedge_{j=1}^m x \neq  x_j ) \\
  &\leftrightarrow  \bigwedge_{j \neq  k} c_j \neq  c_k ) \end{align*}

  This axiomatises a complete consistent theory $T$ with quantifier elimination in the given language. Indeed, restricting to any finite sublanguage $\L_0$ containing $X,C$ and finitely many function symbols $f_i$, $T \restriction \L_0$ is the Fra\"iss\'e limit of the class of finite structures $(X_0,C_0)$ where for every $f_i, f_j \in \L_0$ and all $x\in X_0$, if $f_i(x)=f_j(x)$ then $i=j$. It follows that $T$ is stable; indeed, $|S^1(A)| \leq  \lambda^\kappa$ for $|A| \leq  \lambda$, so $T$ is $2^{\kappa}$-stable.

  Now suppose $\kappa > \aleph_0$ and let $C_0 \subseteq  C$ with $|C_0| = \aleph_0$. We claim that the formula $x \in X$ has no compressible (equivalently, by \cref{lem:comp stable type}(i), l-isolated) completion $p \in S^x(C_0)$. Indeed, it is easy to see that $p$ would have to include for each $i$ a formula $f_i(x)=c_i$ for some $c_i \in C_0$, but then $c_i \neq  c_j$ for $i \neq  j$, contradicting $\kappa > |C_0|$.

  Other (hints for) examples are given in \cite[Exercise~IV.2.13]{Sh-CT} where Shelah also gives a \emph{superstable}\footnote{Recall that $T$ is \defn{superstable} if it is stable in all cardinals $\geq 2^{|T|}$.} counterexample, which we will describe briefly. Let $\L = \set{E_\nu,P_s}{\nu \in \omega^\omega,s\in \omega^{<\omega}}$ where the $P_s$'s are unary predicates and the $E_\nu$'s are binary relation symbols. Let $M$ be the $\L$-structure whose universe is $\omega^\omega \times \omega$ where $P_s^M = \set{(\eta,n) \in \omega^\omega\times \omega}{s \triangleleft \eta}$ and $E_\nu^M$ is an equivalence relation where $(\eta_1,n_1),(\eta_2,n_2)$ are equivalent iff ($\eta_1 = \eta_2$ or for some $n<\omega$, $\eta_1 \restriction n = \eta_2 \restriction n = \nu \restriction n$ and $\eta_1(n) = \eta_2(n) \neq \nu(n)$). Essentially, each class is infinite and two branches in the tree $\omega^\omega$ are $\nu$-equivalent if they divert from $\nu$ at the same point and in the same direction (starting the same cone), and $\nu$ is $\nu$-equivalent only to itself. Let $T = \Th(M)$. It is not too hard to see that $T$ has quantifier elimination.
  We leave it as exercise to check that for any set $A$, $S^1(A) \leq |A|+2^{\aleph_0}$, and thus $T$ is superstable. 

  Finally, working in $\U^{\eq}$ and letting $A = \set{(\eta,n)/E_\nu}{\eta \neq \nu, n<\omega}$, there is no l-isolated type $p\in S^1(A)$ (in the home sort). Indeed, if $p(x)$ is such a type, then for every $\nu \in \omega^\omega$ there is some $\eta \neq \nu$ such that $x/E_\nu = (\eta,0)/E_\nu$ is in $p$.
  It follows that for some $\nu \in \omega^\omega$, $p$ must contain $\set{P_s(x)}{s \triangleleft \nu}$. 
  But then $p \vdash x/E_\nu \neq (\eta,0)/E_\nu$ for all $\eta \neq \nu$, contradiction.

\end{remark}
\subsection{Generically stable types}

Generically stable types are invariant types that exhibit stability-like behavior ``generically'' i.e., when considering their Morley sequences. This notion was first studied in the NIP context by Shelah \cite{MR2062198} (under the name ``stable types'') and then by Hrushovski and Pillay \cite{MR2800483} and independently Usvyatsov \cite{MR2499428}. See also \citep[Section~2.2.2]{simon-NIP}. It was defined in general in \cite{AnandPredrag} by Pillay and Tanovi\'{c}. See also \cite{MR4033642} for more on generic stability outside of the NIP context.

\begin{definition} \label{def:GS global type}
  We say that a global type $p$ is \defn{generically stable over} $A$ if it is $A$-invariant and for every ordinal $\alpha$, every $\phi(x)$ with parameters in $\U$ and every Morley
  sequence $\sequence{a_i}{i<\alpha}$ of $p$ over $A$, the set $\set{i<\alpha}{\U \vDash\phi(a_i)}$ is finite or cofinite.
\end{definition}

\begin{fact} \label{fac:generically stable -> dfs}
  Suppose $p$ is generically stable over $A$. Then:
 \begin{enumerate}
    \item \cite[Proposition~2.1]{AnandPredrag} $p$ is $A$-definable and finitely satisfiable in every model containing $A$.
    \item \cite[Proposition~2.1]{AnandPredrag} If $I$ is a Morley sequence of $p$ over $A$ then $I$ is an indiscernible set (i.e., totally indiscernible).
    \item \cite[Proposition~3.2]{MR4033642} $p = \lim\sequence{a_i}{i<\omega}$ for any Morley sequence $\sequence{a_i}{i<\omega}$ of $p$ over $A$, i.e., $p$ is the limit type of any of its Morley sequences over $A$: $\theta(x) \in p$ iff $\theta(a_i)$ holds for all but finitely many $i<\omega$. (In \cite[Proposition~3.2]{MR4033642} it is stated over models, but it is also true over sets and follows directly from the definition; we leave this to the reader.) 
  \end{enumerate}

  Moreover, by \cite[Theorem~2.29]{simon-NIP} if $T$ is NIP then each one of these conclusions is equivalent to generic stability for an $A$-invariant type.
\end{fact}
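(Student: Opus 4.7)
The plan is to derive each of (1)–(3) from the finite-or-cofinite condition and then invoke a known converse for the NIP ``moreover''. Throughout, fix a Morley sequence $I=(a_i)_{i<\omega}$ of $p$ over $A$.

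First I would handle (3), which is essentially a restatement of the definition. Given $\theta(x)$ over $\U$, write $\theta(x)=\theta'(x,b)$; by the finite-or-cofinite condition the set $\{i<\omega : \vDash \theta(a_i)\}$ is finite or cofinite. If cofinite, pick $i$ in it large enough that $a_i \vDash p|_{Ab}$; this forces $\theta'(x,b)\in p$. Conversely, if $\theta'(x,b)\in p$ then $\vDash\theta'(a_i,b)$ for every $i$ with $b$ available in the base, so the complement in $\omega$ is finite. Hence $p=\lim I$.

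For (2), $I$ is already order-indiscernible over $\U$ by \cref{fac:sequences generated by invariant types}. To upgrade to total indiscernibility I would extend $I$ to a Morley sequence indexed by $\omega+\omega$ and apply the finite-or-cofinite condition to formulas of the form $\phi(a_{\omega+k},x)$ for various $k$; this pins down the truth value of each such $\phi$ on $I$ up to a finite exceptional set, and combined with $A$-invariance of $p$ together with the standard fact that in an order-indiscernible sequence transpositions suffice for total indiscernibility, one concludes that $I$ is indiscernible as a set.

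For (1), $A$-definability comes from a majority-vote construction along a long initial segment of $I$. For each formula $\phi(x,y)$, take a Morley sequence $(a_i)_{i<n}$ of $p$ over $A$ with $n$ large and odd, and let $d\phi(y)$ be the formula ``a strict majority of the $a_i$'s satisfy $\phi(a_i,y)$''. By finite-or-cofinite applied pointwise, one has $\vDash d\phi(b)$ iff $\phi(x,b)\in p$, provided $n$ is taken large enough (depending on $\phi$). This gives a definition with parameters in $I$; to descend to $A$, I would use that any two Morley sequences of $p$ over $A$ of a given length have the same type over $A$ and are therefore interchangeable under $\Aut(\U/A)$, so that the defining set $\{b:\phi(x,b)\in p\}$ is $\Aut(\U/A)$-invariant and hence $A$-definable (for example via a canonical code in $\U^{\eq}$). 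Finite satisfiability in any $M\supseteq A$ then follows by using $A$-definability to inductively build a Morley sequence of $p$ over $A$ entirely inside $M$ and reading off witnesses there. For the moreover under NIP the forward direction is (1)–(3), while the converse — that an $A$-invariant global type which is $A$-definable and finitely satisfiable in every model containing $A$ is generically stable — is the standard NIP characterisation \cite[Theorem~2.29]{simon-NIP}, which I would simply cite. The main obstacle is (1): descending the majority-vote definition to $A$ without circularity and then securing finite satisfiability in arbitrary $M\supseteq A$ without assuming NIP, which requires leveraging the indiscernibility-as-set from (2) in an essential way.
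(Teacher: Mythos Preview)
The paper does not prove this statement at all: it is recorded as a \emph{Fact}, with parts (1) and (2) attributed to Pillay--Tanovi\'c, part (3) to Garc\'ia--Onshuus--Usvyatsov (with the remark that the extension from models to sets ``follows directly from the definition; we leave this to the reader''), and the ``moreover'' to \cite[Theorem~2.29]{simon-NIP}. So there is nothing to compare against beyond the citations; your proposal is an attempt to supply arguments the paper deliberately omits.

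Your sketch has the right overall shape, but there is a genuine gap in your argument for (3). In a Morley sequence $(a_i)_{i<\omega}$ of $p$ over $A$ one has $a_i \vDash p|_{Aa_{<i}}$; there is no reason any $a_i$ should realise $p|_{Ab}$ for an arbitrary $b \in \U$, so ``pick $i$ large enough that $a_i \vDash p|_{Ab}$'' is unjustified. The standard fix is to extend $I$ to a Morley sequence $(a_i)_{i<\omega+\omega}$ over $A$ with $a_{\omega+j} \vDash p|_{Ab\,a_{<\omega+j}}$; then if $\theta(x,b)\in p$ the set $\{i<\omega+\omega:\vDash\theta(a_i,b)\}$ contains $[\omega,\omega+\omega)$, hence is infinite, hence cofinite by the hypothesis, which forces $\{i<\omega:\vDash\neg\theta(a_i,b)\}$ to be finite.

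For (1) and (2) your outline is in the right direction but, as you yourself flag, incomplete. In (1) the key missing step is why a single $n$ works uniformly in $b$ for the majority-vote formula: this uses total indiscernibility from (2) together with compactness (otherwise one could produce $b$ splitting the sequence into two infinite pieces, contradicting the hypothesis). Your descent of the definition to $A$ via $\Aut(\U/A)$-invariance is fine, but the argument for finite satisfiability (``build a Morley sequence inside $M$'') is circular as stated, since you have not yet shown $p|_M$ is realised in $M$. The cleanest route is the one in the cited references: once (2) and (3) are in hand, finite satisfiability follows from the limit description combined with total indiscernibility, not from definability alone.
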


\begin{proposition}\label{prop:generically stable compressible types are l-isolated}
  Let $p \in S(\U)$ be generically stable over $A \subseteq  \U$, and suppose $p|_A$ is compressible.
  Then $p|_A$ is l-isolated.
\end{proposition}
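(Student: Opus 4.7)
My plan is to fix $\phi(x,y)$, translate compressibility into finite satisfiability in $A$ of a partial $\L(A)$-type that captures l-isolation, and then use generic stability to promote finite satisfiability to satisfiability in $A$.

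Fix $\phi(x,y)$. By compressibility of $p|_A$ take a formula $\zeta(x,z)$ compressing $p|_A \restriction \phi$. Since $p$ is generically stable, it is $A$-definable by \cref{fac:generically stable -> dfs}(1); write $d_p\phi(y),\,d_p\zeta(z) \in \L(A)$ for the $\phi$- and $\zeta$-definitions. Let $\theta(z,y) := \forall x(\zeta(x,z) \to \phi(x,y)^{d_p\phi(y)})$, another $\L(A)$-formula. Then the compressibility hypothesis for $\phi$ is precisely the statement that the partial $\L(A)$-type
\[ \Sigma(z) := \{d_p\zeta(z)\} \cup \{\theta(z,b) : b \in A^y\} \]
is finitely satisfiable in $A$. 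Any realisation $c_0 \in A^z$ of $\Sigma$ gives a formula $\zeta(x,c_0) \in p|_A$ which isolates $p|_A \restriction \phi$, so the problem reduces to realising $\Sigma$ in $A$.

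Extend $\Sigma$ to a global type $q \in \Sdfs{z}{A}{\U}$; in particular $q$ is $A$-invariant. Generic stability of $p$ implies that $p$ commutes in the tensor product with every $A$-invariant type, so $p(x) \otimes q(z) = q(z) \otimes p(x)$. Choosing $(a,c) \vDash p \otimes q$ then gives simultaneously $a \vDash p|_A$, $a \vDash p|_{Ac}$, $c \vDash q|_A$, and $c \vDash q|_{Aa}$. Since $\Sigma \subseteq q|_A$, this $c$ realises $\Sigma$ in $\U$, and additionally $\zeta(a,c)$ holds and $\theta(c,b)$ holds for every $b \in A^y$.

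The decisive and hardest step is to pass from this witness $c \in \U^z$ to one in $A^z$. A naive compactness argument using finite satisfiability of $q$ in $A$ only produces matching finite approximations in $A^z$, not a single realisation, so the rigidity of generic stability must be used genuinely. The plan is to combine the finite satisfiability of $q$ in $A$ with a Morley sequence $\sequence{a_i}{i<\omega}$ of $p$ over $A$, which by \cref{fac:generically stable -> dfs}(2) is an indiscernible \emph{set} and by \cref{fac:generically stable -> dfs}(3) satisfies $p = \lim \sequence{a_i}{i<\omega}$: the commutativity $p \otimes q = q \otimes p$ together with $\tp(c/Aa_i)$ being uniformly dictated by $q|_A$ means that the truth values of $\theta(z,b)$ for $b \in A^y$ are constant along the $A$-indiscernible-set symmetries, so any failure of $\Sigma$ to be realised in $A$ would force some $\L(A)$-formula to alternate on the Morley sequence, contradicting its indiscernibility as a set. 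Making this final ``formula-by-formula to single realisation'' transfer precise, using that $\Sigma$ is an $\L(A)$-type built from the definability schemes of $p$ rather than a general partial type, is where I expect the main difficulty to lie.
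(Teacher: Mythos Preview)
There is a genuine gap. Your reduction to ``realise $\Sigma$ in $A$'' is a correct sufficient condition, but this is the entire content of the proposition and you have not carried it out; the sketch at the end does not work.

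First, the tools you invoke do not bridge finite satisfiability of $\Sigma$ in $A$ to an actual realisation in $A$. Commutativity $p\otimes q = q\otimes p$ only tells you that your $c\in\U^z$ also satisfies $q|_{Aa}$; it produces no element of $A^z$. Your sentence ``any failure of $\Sigma$ to be realised in $A$ would force some $\L(A)$-formula to alternate on the Morley sequence'' is unsupported: $\Sigma$ is a type in the $z$-variable while the Morley sequence lives in the $x$-variable, so there is no alternation mechanism in sight. Total indiscernibility of $(a_i)_i$ and $p=\lim_i a_i$ are facts about the $x$-side and say nothing about which $c_0\in A^z$ satisfies infinitely many $\theta(c_0,b)$ simultaneously.

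Second, your reduction may be too strong. You fixed $\zeta$ and are demanding that some instance $\zeta(x,c_0)$ with $c_0\in A^z$ isolate $p|_A\restriction\phi$. But l-isolation only asks for \emph{some} formula in $p|_A$ to do this; there is no reason it must be an instance of the compressing formula. So even once the proposition is proved, $\Sigma$ might not be realised in $A$ for your particular $\zeta$.

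The paper avoids this trap entirely. It passes to saturated extensions $(A'',a)\succ(A',a)\succ(A,a)$ of the pair structure, so that the compressing witness $d$ can be taken in $A''$ with $\zeta(x,d)\vdash(p\restriction\phi)|_{A'}$. From $p=\lim_i a_i$ and compactness it extracts a bound $N$ such that every length-$N$ Morley sequence of $p$ over $A$ contains a term realising $(p\restriction\phi)|_{A'}$. It then takes a \emph{maximal} Morley sequence $(a_i)_{i<n}$ whose terms all fail $(p\restriction\phi)|_{A'}$, records witnesses $c_i\in A'$ to the failures, and by compactness obtains $\psi\in p^{(n+1)}|_A$ such that $\psi$ together with the failure conditions forces the next term to satisfy $(p\restriction\phi)|_{A'}$. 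The isolating formula is then $\chi(x)=\exists x_0\dots x_{n-1}\bigl(\bigwedge_{i<n}\phi(x_i,c_i)^{\epsilon_i}\wedge\psi(x_0,\dots,x_{n-1},x)\bigr)$, with parameters from $A'$; this shows $p|_{A'}$ is l-isolated, and one transfers to $p|_A$ by elementarity of the pair (\cref{rem:being l-isolated is elementary}). The isolating formula is thus built by existentially quantifying over a finite Morley sequence, not by specialising $\zeta$ to a parameter in $A$; this is precisely the idea your outline is missing.
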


For the proof we will need the following observation. Recall the notations from \cref{sec:compressible types}.

\begin{remark} \label{rem:being l-isolated is elementary}
  Suppose $\tp(a/A)$ is l-isolated, and $(A,a) \equiv (A',a')$. Then $\tp(a'/A')$ is l-isolated. Indeed, if $\zeta(x,d) \in p$ isolates $p \restriction \phi$, then
  \[(A,a) \vDash \exists d \in A^z (\zeta(a,d) \land \forall y 
  (\bigwedge_{\epsilon<2}(\phi(a,y)^\epsilon \rightarrow \forall x (\zeta(x,d) 
  \rightarrow \phi(x,y)^\epsilon)))).\]
  Thus, the same is true in $(A',a')$, which suffices.
\end{remark}

\begin{proof}[Proof of \cref{prop:generically stable compressible types are l-isolated}]
  Suppose $p \in S(\U)$ is generically stable over $A$ and $p|_A$ is compressible. Let $a \vDash p|_A$, and let $M$ be a model containing $Aa$. Let $(M',A',a) \succ (M,A,a)$ be an $|M|^+$-saturated extension (in a language with a predicate $P$ for $A$ and constant symbols $a$), and let $(M'',A'',a) \succ (M',A',a)$ be an $|M'|^{+}$-saturated extension (with $M'' \subseteq \U$). Since $p$ is definable over $A$ by \cref{fac:generically stable -> dfs}(1), it follows that $p|_{A''} = \tp(a/A'')$.

  Let $\phi(x,y)$ be any formula. Note that $(A'',a) \succ (A',a) \succ (A,a)$ and that $(A'',a)$ is $|A'|^+$-saturated, so by \cref{{fac:compressiblity preserved under elem eq},{fac:compressible in terms of elementary extension}}, there is some $d \in (A'')^z$ and some formula $\zeta(x,z)$ such that $\zeta(a,d)$ holds (so $\zeta(x,d)\in p$) and $\zeta(x,d)\vdash (p\restriction \phi)|_{A'}$. By \cref{fac:generically stable -> dfs}(3) and compactness there is some $N<\omega$ such that for every Morley sequence $\sequence{a_i}{i<N}$ of $p$ over $A$, $\zeta(a_i,d)$ holds for some $i<N$, and hence $a_i \vDash (p\restriction \phi)|_{A'}$.

  Let $\sequence{a_i}{i<n}$ be a Morley sequence of $p$ over $A$ of maximal 
  length such that $a_i \not \vDash (p\restriction \phi)|_{A'}$ for all $i<n$. 
  For $i<n$, let $c_i \in (A')^y$ and $\epsilon_i < 2$ be such that 
  $\phi(a_i,c_i)^{\epsilon_i}$ holds but $\neg\phi(x,c_i)^{\epsilon_i} \in p$. 
  Then the following set of formulas over $M'$ is inconsistent:
  \[p^{(n+1)}(x_0,\dots,x_n)|_A \cup \set{\phi(x_i,c_i)^{\epsilon_i}}{i<n} \cup \neg \theta(x_n)\]
  where $\theta(x) = \forall y \in P(\phi(a,y)\leftrightarrow \phi(x,y))$. By compactness (and saturation of $M'$), there is some formula $\psi(x_0,\dots,x_n) \in p^{(n+1)}|_A$ such that $\psi \cup \set{\phi(x_i,c_i)^{\epsilon_i}}{i<n} \vdash \theta(x_n)$.


  Let $\chi(x) = \exists x_0\dots x_{n-1} (\bigwedge_{i<n}\phi(x_i,c_i)^{\epsilon_i} \land \psi(x_0,\dots,x_{n-1},x))$. Since $M'$ is a model, $\chi(x) \vdash (p\restriction \phi)|_{A'}$. Also, since $\psi(a_0,\dots,a_{n-1},x) \in p$, it follows that $\chi(x) \in p|_{A'}$.

  Since $\phi$ was arbitrary, this means that $p|_{A'}$ is l-isolated, and hence by \cref{rem:being l-isolated is elementary}, we are done.
\end{proof}

It is convenient to use the following definition.
\begin{definition} \label{def:GS types over models}
  Suppose $M \vDash T$. A type $p \in S(M)$ is \defn{generically stable} if it has a global $M$-invariant extension which is generically stable over $M$.
\end{definition}

\begin{remark}
  If $p\in S(M)$ is generically stable then it has a unique $M$-invariant extension by \cite[Proposition 2.1(iii)]{AnandPredrag}.
\end{remark}

Note that if $p\in S(M)$ then it has a global $M$-invariant extension (e.g., a coheir). Thus, together with \cite[Proposition~3.4]{MR4033642}, we get the following fact.

\begin{fact} \label{fac:stable => GS over model}
  If $M$ is a model and $p \in S(M)$ is stable then $p$ is generically stable.
\end{fact}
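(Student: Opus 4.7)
The plan is to produce a global $M$-invariant extension of $p$ and verify that it is generically stable over $M$ in the sense of \cref{def:GS global type}; the cited \cite[Proposition~3.4]{MR4033642} will then package the technical part of the verification, so the substantive step is producing the right extension and reducing its analysis to the local stability of its $\phi$-reducts.

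First I would use \cref{def:stable type} to extract a canonical global extension. Since $p \in S(M)$ is stable, $p$ itself is definable, and by \cref{fac:stable types} each $\phi$-definition $d_\phi p$ is an $\L(M)$-formula, living inside the stable formula $\phi(x,y) \wedge \psi_\phi(x)$ for some $\psi_\phi \in p$. These $\phi$-definitions assemble to a unique global extension $q \in S(\U)$ of $p$ whose $\phi$-definition scheme coincides with that of $p$; since every $d_\phi p$ has parameters from $M$, the resulting $q$ is $M$-invariant (in fact definable over $M$).

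Next I would verify that $q$ is generically stable over $M$. By \cref{fac:stable types} each $\phi$-reduct of $q$ sits inside a genuinely stable formula, so the standard stability-theoretic arguments apply locally: symmetry of forking for stable formulas, together with the fact that Morley sequences of a definable type inside a stable formula are totally indiscernible, show that any Morley sequence $\sequence{a_i}{i<\alpha}$ of $q$ over $M$ is an indiscernible set. Since a formula over $\U$ must hold on finitely or cofinitely many elements of an infinite indiscernible set, this yields the finite/cofinite dichotomy demanded by \cref{def:GS global type}. The main obstacle is precisely this last implication --- concluding total indiscernibility of a Morley sequence of $q$ from the local stability of all its $\phi$-reducts, without assuming the ambient $T$ is stable or even NIP --- and this is exactly the content that \cite[Proposition~3.4]{MR4033642} is being invoked to provide.
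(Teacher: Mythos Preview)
Your approach is correct and follows the same line as the paper: produce a global $M$-invariant extension of $p$ and invoke \cite[Proposition~3.4]{MR4033642}. The paper does this in a single sentence by taking a coheir of $p$ (which exists over any model and is automatically $M$-invariant, independently of stability), whereas you assemble the definable extension from the $\phi$-definition schemes; both choices work, but the coheir route is more economical since it sidesteps the consistency check for the global definition scheme. One minor imprecision worth flagging: your claim that an arbitrary formula over $\U$ must hold on finitely or cofinitely many members of an infinite indiscernible set is false in general (consider the edge relation in the random graph); what makes it true here is that every $\phi(x,y)$ is stable on realizations of $p$ by \cref{fac:stable types}, and this should be stated explicitly.
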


It follows that when the base is a model, \cref{lem:comp stable type}(i.b) is implied by \cref{prop:generically stable compressible types are l-isolated}.

\section{Rounded averages of compressible types and applications}

Let $\Maj$ be the majority rule Boolean operator, i.e., for truth values $P_0,\dots,P_{n-1}$, let
\[\Maj_{i < n} P_i =
\bigvee_{\substack{I_0 \subseteq n \\ |I_0| > n/2}}  \bigwedge_{i \in I_0} P_i.\]
We just write $\Maj_i$ if $n$ is clear.

More generally, for $\alpha \in (0,1)$, let $\Maj^\alpha$ be the ``greater than an $\alpha$-fraction'' Boolean operator, i.e.,
\[\Maj^\alpha_{i < n} P_i =
\bigvee_{\substack{I_0 \subseteq n \\ |I_0| > \alpha n}}  \bigwedge_{i \in I_0} P_i.\]

\begin{definition}\label{def:rounded average}
  Suppose $\phi(x,y)$ is a formula, $B \subseteq \U^y$ and $p_0(x),\ldots ,p_{n-1}(x) \in S_\phi(B)$.

  The \defn{rounded average} of  $p_0(x),\ldots ,p_{n-1}(x) \in S_\phi(B)$ is the following (possibly inconsistent) collection of formulas 
  \[\ravg{\varsequence{p_i}{i<n}}:=\set{\phi(x,b)^\eps}{b \in B, \eps<2, \Maj_{i<n} (\phi(x,b)^\eps \in p_i(x))}.\]

  More generally, for $\alpha \in [\frac{1}{2},1)$, the \defn{$\alpha$-rounded average} is the set
  \[\ravga{\varsequence{p_i}{i<n}}{\alpha}:=\set{\phi(x,b)^\eps}{b \in B, \eps<2, \Maj^\alpha_{i<n} (\phi(x,b)^\eps \in p_i(x))}.\]
\end{definition}

The main result of this section is:
\begin{theorem} \label{thm:rounded average of compressible intro}
  Let $\phi(x,y)$ be an NIP formula and suppose $\alpha \in [1/2,1)$.
  Then there exist $n$ and $k$ depending only on $\vc(\phi)$ and $\alpha$ such that for $A \subseteq  \U^y$,
  any $p \in S_{\phiopp}(A)$ is the $\alpha$-rounded average of $n$ types in 
  $S_{\phiopp\down k}(A)$.
\end{theorem}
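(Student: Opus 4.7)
The plan is a compactness reduction to a combinatorial step. Pass to a sufficiently saturated extension $\bar\U \succ \U$, realise $p$ by $b^* \in \bar\U^y$, and reformulate the goal as finding $b^*_0,\dots,b^*_{n-1} \in \bar\U^y$ such that each $\tp_{\phiopp}(b^*_i/A)$ is $k$-compressible and, for every $a \in A$,
\[ \vDash  \phi(a,b^*) \Leftrightarrow  |\{i<n : \vDash  \phi(a,b^*_i)\}| > \alpha n . \]
Setting $q_i := \tp_{\phiopp}(b^*_i/A)$ then recovers the rounded-average decomposition from \cref{def:rounded average}. A standard compactness argument in $\bar\U$ reduces the task to its finite version: for each finite $A_0 \subseteq A$, exhibit $k^*$-compressible $q_0,\dots,q_{n-1} \in S_{\phiopp\down k^*}(A)$ whose $\alpha$-rounded average matches $p$ on $A_0$, with $n$ and $k^*$ depending only on $\vc(\phi)$ and $\alpha$.

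The compressible candidates come from \cref{cor:localComp}(iii) applied to $\phiopp$: since $\vc^*(\phiopp) = \vc(\phi)$, any consistent partial $\phiopp$-type over $A$ has a $k^*$-compressible completion, where $k^* := \kc(\vc(\phi))$. Hence plenty of compressible types agree with $p$ on any prescribed finite part of $A$; the combinatorial difficulty is to coordinate $n$ of them into a majority vote matching $p$ uniformly on $A_0$.

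For this coordination I would use a boosting/weighted-majority argument. Write $\theta(a;y_0,\dots,y_{n-1}) := \Maj^\alpha_{i<n}\phi(a,y_i)$. As a disjunction of conjunctions of $n$ instances of $\phi$, \cref{lem:vcBool} together with \cref{rem:bound on vc-dimension of dual} bounds both $\vc(\theta)$ and $\vc^*(\theta)$ in terms of $n$ and $\vc(\phi)$. Starting from a $k^*$-compressible $q_0$ agreeing with $p|_{A_0}$ and iterating, at each step I would use \cref{cor:localComp}(iii) to pick a new $k^*$-compressible $q_j$ forced to agree with $p$ on those $a \in A_0$ where the current accumulated vote is incorrect or falls within the margin $\alpha - \tfrac12$ of being incorrect. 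A standard potential-function estimate should then bound the number of rounds required by some $n = n(\vc(\phi),\alpha)$.

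The main obstacle is the quantitative step: making $n$ depend only on $\vc(\phi)$ and $\alpha$, not on $|A_0|$. This is exactly where the VC-dimensional control on Boolean combinations of $\phi$-instances from \cref{lem:vcBool} combines with the uniform compression bound $k^* = \kc(\vc(\phi))$ from \cref{thm:infinite CCT compression}, and where the margin $\alpha - \tfrac12$ drives the convergence rate of the boosting scheme. Reconciling the combinatorial complexity of the majority-vote family with the fixed compression size, independently of $|A|$, is the technical crux.
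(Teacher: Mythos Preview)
Your compactness reduction has a genuine gap: $k$-compressibility is not a closed condition. The set $S_{\phiopp\down k}(A)$ is only $G_\delta$ in $S_{\phiopp}(A)$, and limits of $k$-compressible types need not be $k$-compressible. For a concrete witness, take $\phi(x,y)=E(x,y)$ with $E$ an equivalence relation having infinitely many infinite classes; over a model $M$, each realised-class type is $1$-isolated (hence $1$-compressible), but their limit --- the ``new class'' type $\{\neg E(a,y): a\in M\}$ --- is not $k$-compressible for any $k$. So even if your finite version held for every $A_0$, no compactness or saturation argument produces a single $n$-tuple of $k$-compressible types matching $p$ on all of $A$: the candidate tuple obtained in the limit will satisfy the (closed) majority condition but may fall out of $S_{\phiopp\down k}(A)$. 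Your boosting sketch for the finite step is also unsubstantiated: to force $q_j$ to agree with $p$ on the current ``bad set'' $A'\subseteq A_0$ while remaining compressible over $A$, \cref{cor:localComp}(ii) only yields $q_j\in S_{\phiopp\down(|A'|+\kc(d))}(A)$, and you give no mechanism bounding $|A'|$ independently of $|A_0|$.

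The paper's route is quite different and confronts infinite $A$ directly. It isolates a \emph{superdensity} condition \hyperlink{superdense}{$(\dagger)_{b,N,S}$} --- that every $N$-variable global $\phi$-type finitely satisfiable in $A$ can be completed against some $p\in S$ so as to agree with $b$ --- and shows via the $(p,q)$-theorem that superdensity alone makes $\tp_{\phiopp}(b/A)$ an $\alpha$-rounded average of $n$ elements of $S$ (\cref{lem:superdensity sufficient condition for rounded average}). The substantive work, \cref{prop:superdensity}, then establishes superdensity for $S=S_{\phiopp\down k}(A)$ over arbitrary $A$: write the finitely satisfiable type as a limit of realised types indexed by $\pfin(|A|)$, apply a tailored Ramsey theorem for chains in $\pfin(\kappa)$ (\cref{prop:Ramsey for Pfin}) to homogenise consistency patterns, and extract a uniformly compressible completion from the resulting alternation bound. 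This Ramsey step is precisely what replaces your failed compactness.
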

(We give a more precise and general statement in \cref{thm:rounded average of compressible}, allowing a partial type $\pi(x)$.)

We give some applications:
\begin{itemize}
  \item Uniformity of honest definitions for NIP formulas, see \cref{def:honest definitions}. This is \cref{cor:luhds}.
  \item Uniform definability of pseudofinite types, see \cref{thm:pseudofinite types are definable}.
\end{itemize}

\subsection{Superdensity}
In this section we isolate a sufficient condition for proving \cref{thm:rounded average of compressible intro}, which uses the $(p,q)$-theorem (see \cref{fac:(p q) theorem}). We then apply it to retrieve UDTFS in \cref{cor:UDTFS}, as a prelude to the proof of the uniformity of honest definitions in \cref{cor:luhds}.

\begin{definition} \label{def:p,q}
  Suppose $q \leq p<\omega$. A set system $(X,\mathcal{F})$ has the \defn{$(p,q)$-property} if for any $S \subseteq \mathcal{F}$ such that $|S| \geq p$, there exists $S_0 \subseteq S$ of size $|S_0| \geq q$ such that $\bigcap S_0 \neq \emptyset$.
\end{definition}

\begin{fact}\label{fac:(p q) theorem} \cite{MR2060639}
  (The $(p,q)$-theorem) There exists a function $\Npq : \N^2 \rightarrow  \N$ such that for any $q\leq p<\omega$, if $(X,\mathcal{F})$ is a finite set system with the $(p,q)$-property such that every $s\in \mathcal{F}$ is nonempty and $\vc^*(\mathcal{F})<q$, then there is $X_0 \subseteq X$ of size $|X_0| = \Npq(p,q)$ such that $X_0 \cap s \neq \emptyset$ for all $s\in \mathcal{F}$.
\end{fact}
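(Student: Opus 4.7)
The plan is to follow the Alon--Kleitman strategy, in Matou\v{s}ek's bounded-VC refinement, via three stages: fractional Helly, LP-duality to a fractional transversal, then an $\epsilon$-net to extract an integer transversal.

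First (\emph{fractional Helly}), I would show that there is a constant $\beta = \beta(p,q) > 0$ such that at least a $\beta$-fraction of the $q$-element subfamilies of $\mathcal{F}$ have non-empty intersection. The $(p,q)$-property says that every $p$-subfamily of $\mathcal{F}$ contains some intersecting $q$-subfamily; to upgrade this to a positive density among \emph{all} $q$-subfamilies one uses Sauer--Shelah applied to the dual set system (whose VC-dimension is $\vc^*(\mathcal{F}) < q$) to bound the number of distinct ``incidence patterns'' of members of $\mathcal{F}$, then pigeonholes to locate many repeated patterns, yielding many intersecting $q$-subfamilies.

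Next (\emph{fractional transversal}), from fractional Helly I would extract a probability measure $\mu$ on $X$ with $\mu(s) \geq \alpha$ for all $s \in \mathcal{F}$, for some $\alpha = \alpha(p,q) > 0$. This is LP duality: the minimum fractional transversal equals the maximum fractional matching, and fractional Helly supplies a first-selection-lemma-type bound on the fractional matching number (averaged over intersecting $q$-tuples, some point of $X$ lies in a positive fraction of members of $\mathcal{F}$), which rescales to the desired $\mu$.

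Finally (\emph{$\epsilon$-net}), I would apply the Haussler--Welzl $\epsilon$-net theorem to the weighted system $(X,\mathcal{F},\mu)$ with $\epsilon := \alpha$. The VC-dimension $\vc(\mathcal{F})$ is bounded in terms of $\vc^*(\mathcal{F}) < q$ by \cref{fact:dual} applied to $\mathcal{F}^*$, so an $\alpha$-net $X_0 \subseteq X$ exists of size $O(\alpha^{-1}\log\alpha^{-1})$, depending only on $p$ and $q$. Since $\mu(s) \geq \alpha$ for every $s \in \mathcal{F}$, $X_0$ meets every $s$, giving the required $\Npq(p,q)$. The main obstacle is the fractional Helly step: the $(p,q)$-hypothesis must be quantitatively leveraged against dual VC-dimension alone, with no appeal to geometric convexity, and tracking the constant $\beta(p,q)$ carefully enough to feed into the later two stages is the delicate part.
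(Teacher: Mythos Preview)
The paper does not prove this statement: it is stated as a \emph{Fact} with a citation to \cite{MR2060639} (Matou\v{s}ek), and no proof is given in the paper itself. Your outline is essentially the argument that appears in that cited reference --- fractional Helly for bounded dual VC-dimension, LP duality to obtain a bounded fractional transversal, then an $\epsilon$-net to pass to an integer transversal --- so there is nothing to compare against in the paper beyond noting that you have reproduced the proof the authors are quoting.
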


We isolate from the proof of \cite[Corollary~6.11]{simon-NIP} the following immediate generalisation of the $(p,q)$-theorem to infinite set systems.

\begin{lemma} \label{lem:compactness pq}
  Let $\phi(x,y)$ be NIP.
  Let $p \geq  q > \vc^*(\phi)$ be integers, and let $N = \Npq(p,q)$.
  Let $A \subseteq  \U^x$ and $B \subseteq  \U^y$.
  Suppose that $\phi(A,b) \neq  \emptyset $ for every $b \in B$,
  and that for every $B_0 \subseteq  B$ with $|B_0| = p$ there exists $B_1 \subseteq  B_0$ with $|B_1| = q$ such that for some $a \in A$ we have $\vDash  \bigwedge_{b \in B_1} \phi(a,b)$.

  Then $\set{\bigvee_{i<N} \phi(x_i,b)}{b \in B}$
  is finitely satisfiable in $A$.
\end{lemma}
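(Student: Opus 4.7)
The plan is to reduce the infinite statement to the finite $(p,q)$-theorem via a compactness argument on finite subsets of $B$. Concretely, by compactness it suffices to show that for every finite $B_0 \subseteq_{\fin} B$ there exist $a_0,\ldots,a_{N-1} \in A$ such that for every $b \in B_0$, $\vDash \bigvee_{i<N} \phi(a_i,b)$. So fix such a $B_0$.

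The goal is to invoke \cref{fac:(p q) theorem} for a judiciously chosen finite restriction of the set system $\{\phi(A,b) : b \in B_0\}$. First, for each $b \in B_0$, use the hypothesis $\phi(A,b) \neq \emptyset$ to pick $a_b \in A$ with $\vDash \phi(a_b,b)$. Second, for each subset $B_1 \subseteq B_0$ with $|B_1| = p$, use the $(p,q)$-hypothesis to choose $B_2 \subseteq B_1$ with $|B_2| = q$ and $a_{B_1} \in A$ with $\vDash \bigwedge_{b \in B_2} \phi(a_{B_1},b)$. Let $A_0 \subseteq_{\fin} A$ be the (finite) set consisting of all the $a_b$ and all the $a_{B_1}$.

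Now consider the finite set system $\mathcal{F} = \{\phi(A_0,b) : b \in B_0\}$ on the finite ground set $A_0$. By construction, every member of $\mathcal{F}$ is nonempty (containing the corresponding $a_b$), and $\mathcal{F}$ has the $(p,q)$-property witnessed inside $A_0$ itself (for each $p$-family of indices $B_1$, the chosen $a_{B_1} \in A_0$ lies in $\bigcap_{b \in B_2}\phi(A_0,b)$ for some $q$-subfamily $B_2$). The dual VC-dimension bound is the easy point: as noted around \cref{rem:bound on vc-dimension of dual}, $\vc^*(\mathcal{F}) \leq \vc(\phi^{\opp}) = \vc^*(\phi) < q$, and this is preserved by restriction to $A_0,B_0$.

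Thus \cref{fac:(p q) theorem} applies and yields a transversal $X_0 \subseteq A_0$ with $|X_0| = \Npq(p,q) = N$ meeting every $\phi(A_0,b)$ for $b \in B_0$. Enumerating $X_0 = \{a_0,\ldots,a_{N-1}\}$ (padding with repetitions if necessary) gives the desired witnesses, finishing the finite satisfiability verification and hence the lemma. The only mild subtlety, and the main point of the argument, is that a naive restriction of the ground set to some arbitrary finite $A_0$ need not inherit the $(p,q)$-property; this is precisely why one must pre-load $A_0$ with a witness for every $p$-subset of the fixed finite $B_0$ before applying the finite $(p,q)$-theorem.
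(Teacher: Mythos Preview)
Your proof is correct and follows essentially the same approach as the paper: reduce to finite $B$ by the definition of finite satisfiability, then apply the $(p,q)$-theorem. The paper's proof is a one-liner that leaves the reduction to a genuinely finite set system implicit; you have spelled out precisely the step of shrinking $A$ to a finite $A_0$ carrying enough witnesses so that the $(p,q)$-property persists, which is the only point requiring any care.
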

\begin{proof}
  By the definition of finite satisfiability, it suffices to see this in the case that $B$ is finite; but this case is a direct consequence of the $(p,q)$-theorem (\cref{fac:(p q) theorem}).
\end{proof}


Suppose $\phi(x,y)$ is a formula, $A \subseteq \U^x$ and $N \in \N$. For variables $\bar{x}=\varsequence{x_i}{i<N}$ of the same sort of $x$, we denote by $\Sdfsl{\bar{x}}{A}{\U^y}{\phi}$ the space of $\Delta$-types in $\bar{x}$ over $\U^y$ which are finitely satisfiable in $A$, where $\Delta = \set{\phi(x_i,y)}{i<N}$. If $p(y) \in S_\phiopp(A)$ then for any $q \in \Sdfsl{\bar{x}}{A}{\U^y}{\phi}$, the product $q(\bar{x}) \otimes p(y)$ is the partial type $q(\bar{x}) \cup p(y) \cup \set{\phi(x_i,y)^{\epsilon_i}}{i<N}$ where $\epsilon_i<2$ is the truth value of $\phi(a_i,b)$ for some (any) $b \vDash p$ and $\varsequence{a_i}{i<N} \vDash q|_{b}$. Note that this is well-defined.

For $b \in \U^y$, $N \in \N$ and $S \subseteq S_\phiopp(A)$, we consider the following condition:

\noindent\hypertarget{superdense}{$(\dagger)_{b,N,S}$} $\left\{
\begin{minipage}{0.8\textwidth}
  \vspace{0.5em}
  For every
  $q(\bar{x}) \in \Sdfsl{\bar{x}}{A}{\U^y}{\phi}$ where $\bar{x}=\varsequence{x_i}{i<N}$,
  there is $p \in S$
  such that
  \[q(\bar{x}) \otimes p(y) \vdash  \bigwedge_{i<N} (\phi(x_i,y) \leftrightarrow \phi(x_i,b)).\]
  \vspace{0.5em}
\end{minipage}
\right.$

\begin{definition} \label{def:superdensity}
  Suppose $\phi(x,y)$ is a formula and $A \subseteq \U^x$. A set $S \subseteq S_\phiopp(A)$ is \defn{superdense} in $S_\phiopp(A)$ if \hyperlink{superdense}{$(\dagger)_{b,N,S}$} holds for every $b \in \U^y$ and $N \in \N$.
\end{definition}

\begin{remark}
  By considering realised types in $A$, it follows that any superdense set $S \subseteq S_\phiopp(A)$ is also dense.
\end{remark}



\begin{lemma} \label{lem:superdensity sufficient condition for rounded average}
  Let $\alpha \in [1/2,1)$ and let $d \in \N$. Let $n \in \N$ be such that $(1-\alpha)n>2^{d+1}-1$, and let $N = \Npq(n,2^{d+1})$. Fix some formula $\phi(x,y)$ such that $\vc(\phi)\leq d$.
  Suppose $A \subseteq  \U^x$, $b \in \U^y$ and $S \subseteq  S_{\phiopp}(A)$ satisfy  \hyperlink{superdense}{$(\dagger)_{b,N,S}$}.


  Then $r:=\tp_\phiopp(b/A)$ is the $\alpha$-rounded average of $n$ elements of $S$.
\end{lemma}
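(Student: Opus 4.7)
I would argue by contradiction. Suppose $r := \tp_{\phi^\opp}(b/A)$ is not the $\alpha$-rounded average of any $(p_0,\ldots,p_{n-1})\in S^n$. (In particular $r \notin S$, since otherwise the constant tuple $(r,\ldots,r)$ would witness the conclusion.) Unpacking the failure, for every such tuple there is some $a \in A$ on which at least $(1-\alpha)n$ of the $p_i$'s disagree with $b$ about $\phi(a,-)$, and since this is an integer count strictly exceeding $2^{d+1}-1$, at least $2^{d+1}$ disagree.

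Next I would dualize. For each $p \in S$ pick a realization $c_p \in \U^y$ (passing to a larger monster if necessary) and set $\tilde S := \{c_p : p \in S\}$. Consider the formula $\psi(x,y) := \phi(x,y) \oplus \phi(x,b)$, which is NIP with $\vc(\psi) \leq \vc(\phi) \leq d$ and (via \cref{rem:bound on vc-dimension of dual}) $\vc^*(\psi) \leq \vc^*(\phi) < 2^{d+1}$; observe that $\psi(A,c_p) = B_p := \{a \in A : \phi(a,c_p) \not\leftrightarrow \phi(a,b)\}$ is nonempty for every $p \in S$ (as $r \notin S$). The contradictory hypothesis translates to the $(n,2^{d+1})$-property for the set system $\{\psi(A,c) : c \in \tilde S\}$ on $A$. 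Since $(1-\alpha)n > 2^{d+1}-1$ with $\alpha \geq 1/2$ gives $n > 2^{d+1}$, I apply \cref{lem:compactness pq} to $\psi$ (with $A \to A$, $B \to \tilde S$, $p \to n$, $q \to 2^{d+1}$) to conclude that the partial $\Delta$-type
\[\pi(\bar x) := \Bigl\{\bigvee_{i < N} \psi(x_i,c) : c \in \tilde S\Bigr\}\]
is finitely satisfiable in $A$.

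Now I extend $\pi$ to a complete $\Delta$-type $q(\bar x) \in \Sdfsl{\bar x}{A}{\U^y}{\phi}$ via the standard ultrafilter limit on $A^N$-realizations of finite subsets of $\pi$, and apply the superdensity hypothesis $(\dagger)_{b,N,S}$ to $q$. This produces some $p \in S$ with $q \otimes p \vdash \bigwedge_{i<N}(\phi(x_i,y) \leftrightarrow \phi(x_i,b))$. To close the contradiction, realize $\bar a \vDash q$ in an elementary extension. Then $(\bar a, c_p) \vDash q \otimes p$ --- the cross-formulas being automatically consistent by well-definedness of $\otimes$ --- and so $\phi(a_i, c_p) \leftrightarrow \phi(a_i, b)$ for every $i < N$. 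But $q$ implies the formula $\bigvee_{i<N}(\phi(x_i, c_p) \oplus \phi(x_i, b))$ (which lies in $\pi$), so $\bar a$ satisfies it, yielding some $j$ with $\phi(a_j, c_p) \not\leftrightarrow \phi(a_j, b)$ --- the sought contradiction.

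The main obstacle I expect is matching the combinatorial set-up cleanly: the auxiliary formula $\psi$ reverses the sort roles of $\phi$ (since $S$ lies on the ``parameter'' side), so one must track which variable plays which role when invoking \cref{lem:compactness pq}. A secondary subtlety is that the contradictory hypothesis naturally gives a statement about arbitrary $n$-tuples from $S$ (with repetition), whereas the $(n,2^{d+1})$-property concerns $n$-subsets of $\tilde S$; this is handled by restricting to distinct tuples when $|\tilde S| \geq n$ (and is vacuous otherwise). The quantitative choice $(1-\alpha)n > 2^{d+1}-1$ is tuned exactly so that a majority-failure crosses the integer threshold $2^{d+1}$ required by $\vc^*(\psi) < 2^{d+1}$ in the $(p,q)$-theorem.
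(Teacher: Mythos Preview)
Your proof is correct and follows essentially the same approach as the paper: introduce the auxiliary formula $\psi = \neg(\phi \leftrightarrow \phi(x,b))$, apply \cref{lem:compactness pq} to obtain a finitely satisfiable partial type, extend to a global $q \in \Sdfsl{\bar x}{A}{\U^y}{\phi}$, and contradict $(\dagger)_{b,N,S}$. The only cosmetic difference is that the paper derives the contradiction directly at the level of types ($q \otimes p \vdash \bigvee_i \neg\phi'(x_i,y)$ for every $p \in S$), while you realise $\bar a \vDash q$ explicitly; both are fine.
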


\begin{proof}
  Since $\alpha \geq 1/2$, $r = \ravga{\varsequence{p_i}{i<n}}{\alpha}$ iff for all $a \in A$, $|\set{i<n}{\phi(a,y)\in r \Leftrightarrow \phi(a,y) \in p_i}| > \alpha n$. 
  Let $\phi'(x,y) = \phi(x,y) \leftrightarrow  \phi(x,b)$ (as a formula over $\U$). In this notation, we must prove that for some $p_0, \ldots, p_{n-1} \in S$, for all $a\in A$, $|\set{i<n}{p_i(y) \vdash \phi'(a,y)}| > \alpha n$.


  Note that $\vc(\phi') = \vc(\phi) \leq d$; indeed, $\phi'$ and $\phi$ shatter the same subsets of $\U^x$.

  Now assume the conclusion fails. In particular, $r \notin S$ (else it is the rounded average of $n$ copies of itself).
  Let $B \subseteq  \U^y$ be a set of realisations of the types in $S$. By assumption, for every $B_0 \subseteq  B$ with $|B_0| = n$, there exists $a\in A$ and $B_1 \subseteq  B_0$ with $|B_1| \geq (1-\alpha)n > 2^{d+1}-1 \geq \vc^*(\phi')$ such that
  $\neg \phi'(a,c)$ holds for all $c \in B_1$ (the last inequality follows from \cref{rem:bound on vc-dimension of dual}).

  Note that for $b' \in B$ we have $\tp_\phiopp(b'/A) \neq  \tp_\phiopp(b/A)$ since $\tp_\phiopp(b/A) \notin S$, so $\neg\phi'(A,b') \neq  \emptyset$.

  So \cref{lem:compactness pq} applies to $\neg \phi'$ (with $p=n$, $q=2^{d+1}$; note that $\vc^*(\phi')=\vc^*(\neg \phi')$) and hence $\pi(\bar{x}):=\set{\bigvee_{i<N} \neg \phi'(x_i,c)}{c \in B}$ is finitely satisfiable in $A$, where  $\bar{x} = \varsequence{x_i}{i<N}$.

  Extend $\pi$ to
  $q(\bar{x}) \in\Sdfsl{\bar{x}}{A}{\U^y}{\phi}$ (formally, first extend $\pi$ to a global type finitely satisfiable in $A$, and then restrict to a global $\set{\phi(x_i,y)}{i<N}$-type $q$. Then note that $q$ implies $\pi$).
  Then for all $p \in S$, we have
  $q (\bar{x}) \otimes p(y) \vdash  \bigvee_{i<N} \neg \phi'(x_i,y)$. However, \hyperlink{superdense}{$(\dagger)_{b,N,S}$} implies that for some $p \in S$, $q(\bar{x}) \otimes p(y) \vdash  \bigwedge_{i<N} \phi'(x_i,y)$, contradiction.
\end{proof}

\begin{remark}
  When $\alpha = 1/2$, $n: = 2^{\vc(\phi)+2} - 1$ and $N: = \Npq(n,2^{\vc(\phi)+1})$ work in \cref{lem:superdensity sufficient condition for rounded average}.
\end{remark}

\begin{remark}\label{rem:ravg of distinct}
  From the proof of \cref{lem:superdensity sufficient condition for rounded average}, we get something slightly stronger (under the same assumptions): either $r \in S$, or $r$ is an $\alpha$-rounded average of \emph{distinct} types in $S$.
\end{remark}

\begin{remark}
  \cref{lem:superdensity sufficient condition for rounded average} admits a partial converse, for an arbitrary formula $\phi(x,y)$ and any $n$ and any $N$:
  letting $\bar{x} = \varsequence{x_i}{i<N}$, if $\tp_\phi(b/A)$ is the rounded average of $n$ elements from $S \subseteq  S^y_\phiopp(A)$,
  then for every
  $q (\bar{x}) \in  \Sdfsl{\bar{x}}{A}{\U^y}{\phi}$
  there is $p(y) \in S$
  such that
  \[q (\bar{x}) \otimes p(y) \vdash  \Maj_{i<N} (\phi(x_i,y) \leftrightarrow  \phi(x_i,b)).\]

  Indeed, suppose $\tp_\phiopp(b/A)=\ravg{\varsequence{\tp(c_j/A)}{i<n}}$ where $\tp(c_j/A)\in S$ for $j<n$
  and
  $q (\bar{x}) \in  \Sdfsl{\bar{x}}{A}{\U^y}{\phi}$.
  If the conclusion fails, we get that for each $j<n$
  \[q \vdash \neg \Maj_{i<N}  (\phi(x_i,c_j) \leftrightarrow  \phi(x_i,b)).\]

  By finite satisfiability, there is $(a_0,\ldots ,a_{N-1}) \in A^N$ satisfying this for every $j<n$.
  Hence $|\set{(i,j)\in N \times n}{\phi(a_i,c_j) \leftrightarrow \phi(a_i,b)}|\leq \frac{1}{2} n N$.
  Then by the pigeonhole principle,
  for some $i<N$ we have
  \[\vDash  \neg \Maj_{j<n} (\phi(a_i,c_j) \leftrightarrow \phi(a_i,b)),\]
  contradicting $\tp_\phiopp(b/A)$ being the rounded average of the $\tp(c_i/A)$.
\end{remark}

In \cref{sec:rounded average proof} we will prove that $S_{\phiopp\down \star}(A)$ is superdense in $S_\phiopp(A)$ and even in a uniform way, as in the proof of \cref{cor:localComp}(iii), which will imply \cref{thm:rounded average of compressible intro} by \cref{lem:superdensity sufficient condition for rounded average}. In the finite case we can already conclude the following, basically because superdensity is the same as density when $A$ is finite.

\begin{corollary} \label{cor:rounded average of compressible finite domain}
  Fix $\alpha \in [1/2,1)$ and some $d \in \N$. Then there are $k,n \in \N$ (depending only on $d,\alpha$) such that if $\phi(x,y)$ is a formula with $\vc(\phi)\leq d$, then for any finite $A \subseteq \U^x$, every $r(y)\in S_{\phiopp}(A)$ is the $\alpha$-rounded average of $n$ types in $S_{\phiopp\down k}(A)$.
\end{corollary}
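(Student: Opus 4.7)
My plan is to apply \cref{lem:superdensity sufficient condition for rounded average} to $S := S_{\phiopp\down k}(A)$ for an appropriate $k$, using the observation that for finite $A$ the superdensity condition $(\dagger)_{b,N,S}$ collapses to ordinary density at $\tp_\phiopp(b/A)$, which is supplied by \cref{cor:localComp}. Concretely, choose $n$ with $(1-\alpha)n > 2^{d+1}-1$ and set $N := \Npq(n,2^{d+1})$, as in \cref{lem:superdensity sufficient condition for rounded average}. Let $k_0 := \kc(d)$ be the bound from \cref{thm:infinite CCT compression}; since $\vc^*(\phiopp) = \vc(\phi) \leq d$ by \cref{rem:bound on vc-dimension of dual}, \cref{cor:localComp} applies to $\phiopp$ with this $k_0$. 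Set $k := N + k_0$; then $n,N,k$ depend only on $d$ and $\alpha$. Fix a finite $A \subseteq \U^x$, a formula $\phi$ with $\vc(\phi) \leq d$, and $b \in \U^y$ with $r(y) = \tp_\phiopp(b/A)$.

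To verify $(\dagger)_{b,N,S}$, let $q(\bar{x}) \in \Sdfsl{\bar{x}}{A}{\U^y}{\phi}$ where $\bar{x} = \varsequence{x_i}{i<N}$. Since $A^N$ is finite and $q$ is finitely satisfiable in $A$, the non-empty family $\{\,\{\bar{a} \in A^N : \bar{a} \vDash F\} : F \subseteq_{\fin} q\,\}$ has the finite intersection property, so some $\bar{a} \in A^N$ realises all of $q$; by completeness of $q$ as a $\Delta$-type, $q = \tp_\Delta(\bar{a}/\U^y)$. Let $A_0 := \{a_0,\ldots,a_{N-1}\} \subseteq A$, so $|A_0| \leq N$ and $\tp_\phiopp(b/A_0) \in S_{\phiopp\down N}(A_0)$ trivially (take the compressing subset to be $A_0$ itself). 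By \cref{cor:localComp}(ii) applied to $\phiopp$, this restriction extends to some $p \in S_{\phiopp\down (N+k_0)}(A) = S$ with $p|_{A_0} = \tp_\phiopp(b/A_0)$. Now for each $i<N$: since $\bar{a} \vDash q$, the formula $\phi(x_i,b)^{\phi(a_i,b)}$ belongs to $q$; and since $a_i \in A_0$, any $b' \vDash p$ satisfies $\phi(a_i,b') = \phi(a_i,b)$, so $q \otimes p$ contains $\phi(x_i,y)^{\phi(a_i,b)}$. Combining, $q(\bar{x}) \otimes p(y) \vdash \bigwedge_{i<N}(\phi(x_i,y) \leftrightarrow \phi(x_i,b))$, verifying $(\dagger)_{b,N,S}$. \cref{lem:superdensity sufficient condition for rounded average} then yields that $r$ is the $\alpha$-rounded average of $n$ types in $S$, as required.

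The only genuinely new point is recognising that in the finite case every $q \in \Sdfsl{\bar{x}}{A}{\U^y}{\phi}$ is the $\Delta$-type of an actual $N$-tuple from $A$; this reduces superdensity to density, and \cref{cor:localComp}(ii) then supplies the latter with a uniform compressibility bound depending only on $d$ and $\alpha$.
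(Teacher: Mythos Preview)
Your proof is correct and follows essentially the same approach as the paper: choose $n,N$ as in \cref{lem:superdensity sufficient condition for rounded average}, set $k = N + \kc(d)$, observe that finite satisfiability in a finite set forces $q$ to be realised by some $\bar{a} \in A^N$, and then apply \cref{cor:localComp}(ii) to $\phiopp$ to extend $\tp_\phiopp(b/\{a_0,\ldots,a_{N-1}\})$ to a $k$-compressible type. Your write-up is simply more explicit about why $q$ is realised and why the resulting $p$ satisfies the tensor condition.
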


\begin{proof}
  Let $n,N$ be as in \cref{lem:superdensity sufficient condition for rounded average}, and let $k=\kc(d)+N$. Fix some $b \in \U^y$. By \cref{lem:superdensity sufficient condition for rounded average}, to show the conclusion for $\tp_{\phiopp}(b/A)$, it is enough to show \hyperlink{superdense}{$(\dagger)_{b,N,S}$} with $S =S_{\phiopp\down k}(A)$. But every $q(\bar{x}) \in \Sdfsl{\bar{x}}{A}{\U^y}{\phi}$ is realised in $A$ (since $A$ is finite), 
  so it boils down to showing that for every $\bar{a} \in A^N$, there is some $p(y) \in S_{\phiopp\down k}(A)$ such that $p(y) \vdash \bigwedge_{i<N }(\phi(a_i,y) \leftrightarrow \phi(a_i,b))$. This follows from the choice of $k$ and \cref{cor:localComp}(ii) applied to $\phiopp$.
\end{proof}
As a corollary we retrieve UDTFS. First recall the definition.

\begin{definition}[UDTFS] \label{def:(UDTFS)}
  We say that
  $\phi(x,y)$ has \defn{uniform definability of types over finite
  sets} (\defn{UDTFS}) if there exists a formula $\psi(x,z)$
  such that for every finite set $A\subseteq \U^{x}$
  with $|A| \geq 2$ the following holds: for every $b \in \U^y$
  there exist $c\in A^{z}$ such that $\psi(A,c) = \phi(A,b)$.
\end{definition}

Every formula with UDTFS is easily NIP (see e.g., the proof of Theorem~14 in \cite{UDTFS}). The proof of UDTFS for NIP formulas in \cite{UDTFS} roughly goes by showing \cref{cor:rounded average of compressible finite domain} with $\alpha = 1/2$, and deducing UDTFS from that (this is not stated explicitly in this language, see the proof of Theorem~14, (1) implies (2), (3) there). We omit the details here since we will prove uniformity of honest definitions in \cref{cor:luhds} below. In \cref{thm:pseudofinite types are definable} we will extend UDTFS to pseudofinite types (see \cref{def:pseudofinite type}).

\begin{corollary} \label{cor:UDTFS}
  The formula $\phi(x,y)$ is NIP iff it has UDTFS.
\end{corollary}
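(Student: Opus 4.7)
The easy direction (UDTFS implies NIP) is already flagged in the paper. For NIP implies UDTFS the plan is to package \cref{cor:rounded average of compressible finite domain} (applied with $\alpha = 1/2$ and $d = \vc(\phi)$) with the usual ``truth-value by equality'' encoding trick used in the proof of \cref{lem:adding one more formula}, producing a single formula $\psi(x,z)$ that works uniformly for all finite $A \subseteq \U^x$ with $|A|\geq 2$.

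Fix $n,k$ from \cref{cor:rounded average of compressible finite domain} for $\alpha=1/2$ and $d=\vc(\phi)$; these depend only on $\vc(\phi)$. Given such $A$ and any $b \in \U^y$, let $r = \tp_{\phiopp}(b/A) \in S_{\phiopp}(A)$, so by \cref{cor:rounded average of compressible finite domain} we may write $r = \ravg{\varsequence{p_i}{i<n}}$ with $p_i \in S_{\phiopp\down k}(A)$. Because $A$ is finite, $k$-compressibility of $p_i$ applied to $A_0 := A$ yields a tuple $c_i = (c_{i,j})_{j<k} \in A^k$ (padding with a repeated coordinate if the witness has length $<k$) and signs $\eps_i \in 2^k$ such that $\bigwedge_{j<k} \phi(c_{i,j},y)^{\eps_{i,j}}$ pins down $p_i$ inside $S_{\phiopp}(A)$. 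The key consequence is that, for every $a \in A$,
\[ \phi(a,y) \in p_i \;\Longleftrightarrow\; \exists y\bigl(\bigwedge_{j<k}\phi(c_{i,j},y)^{\eps_{i,j}}\wedge \phi(a,y)\bigr), \]
since any realisation of the compressing formula is a realisation of $p_i$.

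Next I absorb the signs $\eps_{i,j}$ into parameters via the trick from \cref{lem:adding one more formula}. Using two distinct elements $d_0 \neq d_1 \in A$ (available since $|A|\geq 2$), define
\[ \psi(x,z) \;:=\; \Maj_{i<n}\,\exists y\Bigl(\bigwedge_{j<k}\bigl(\phi(z^{i,0}_j,y)\leftrightarrow z^{i,1}_j = z^{i,2}_j\bigr)\wedge \phi(x,y)\Bigr), \]
where $z = (z^{i,\ell}_j)_{i<n,\ell<3,j<k}$. The formula $\psi$ depends only on $n$ and $k$, hence only on $\vc(\phi)$. Choose parameters $c \in A^z$ by setting $c^{i,0}_j := c_{i,j}$, $c^{i,1}_j := d_0$, and $c^{i,2}_j := d_0$ if $\eps_{i,j}=1$, $c^{i,2}_j := d_1$ otherwise; then the biconditional inside $\psi$ becomes exactly $\phi(c_{i,j},y)^{\eps_{i,j}}$.

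Combining the two displays, for every $a \in A$ we get
\[ \psi(a,c) \;\Longleftrightarrow\; \Maj_{i<n}\,\bigl[\phi(a,y)\in p_i\bigr] \;\Longleftrightarrow\; \phi(a,y)\in r \;\Longleftrightarrow\; \vDash \phi(a,b), \]
where the middle equivalence is just the definition of rounded average. Thus $\psi(A,c) = \phi(A,b)$, establishing UDTFS. The only substantive input is the rounded-average decomposition; the rest is bookkeeping (padding compressions to uniform length $k$, the three-copy equality trick to turn sign patterns into parameters, and checking that the resulting $\psi$ has the shape permitted in \cref{def:(UDTFS)}), so I do not expect a real obstacle.
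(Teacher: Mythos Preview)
Your proof is correct and follows essentially the same approach the paper sketches (and carries out in full for the stronger \cref{cor:luhds}): decompose $\tp_{\phiopp}(b/A)$ as a rounded average of $k$-compressible types via \cref{cor:rounded average of compressible finite domain}, then package the compressing conjunctions into a single formula using the equality trick from \cref{lem:adding one more formula}. The only cosmetic difference is that you use an $\exists y\,\wedge$ formulation while \cref{cor:luhds} uses $\forall y\,\rightarrow$; for finite $A$ these coincide, since $p_i|_{A_1}$ actually isolates $p_i$.
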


We do point out that the proof here is, at least conceptually, simpler than the proof in \cite{UDTFS}: both proofs use the finite version of \cref{cor:density of comp in countable NIP}, but here the only other ingredient is the $(p,q)$-theorem, while there both the VC-theorem and von Neumann's minimax theorem are used.

\subsection{A variant of Ramsey's theorem for finite subsets}
Here we will prove a variant of Ramsey's theorem for finite subsets of a cardinal. This result generalises \cref{rem:cofinal finite colouring} for $(\pfin(\kappa), \subseteq)$ in the same way that Ramsey's theorem generalises the pigeonhole principle. It will be used in the proof of \cref{thm:rounded average of compressible intro}.

For a partial order $(X,\leq)$ and $n \in \N$, let $X^{n}_{<} = \set{(x_0,\ldots ,x_{n-1}) \in X^n}{x_0 <  \ldots  < x_{n-1}}$ be the set of ordered chains of size $n$ (in short, \defn{$n$-chains}).

For $\kappa$ a cardinal, let $\pfin(\kappa)$ be the set of finite subsets of $\kappa$, partially ordered by inclusion.

Say $f : \pfin(\kappa) \rightarrow  \pfin(\kappa)$ is
\defn{strictly increasing} if
$s \subsetneq  t \Rightarrow  f(s) \subsetneq  f(t)$ for all $s,t \in \pfin(\kappa)$, and say $f$ is
\defn{cofinal} if for all $s \in \pfin(\kappa)$ there is $t \in \pfin(\kappa)$ such that $f(t) \supseteq s$. Note that the image of an $n$-chain under a strictly increasing map is an $n$-chain.

\begin{proposition} \label{prop:Ramsey for Pfin}
  Let $\kappa$ be an infinite cardinal, $0<n \in \N$
  and let $c : \pfin(\kappa)^{n}_{<} \rightarrow  r<\omega$ be a finite colouring of the $n$-chains.
  Then there is a
  strictly increasing
  cofinal map $f : \pfin(\kappa) \rightarrow  \pfin(\kappa)$ such that the image of all $n$-chains $f(\pfin(\kappa)^{n}_{<})$ is monochromatic, i.e., $|(c\circ f)(\pfin(\kappa)^{n}_{<})| = 1$.
\end{proposition}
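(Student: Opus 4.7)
The plan is to construct $f$ by recursion on $|s|$, using a cofinal ultrafilter $\mathcal{D}$ on $\pfin(\kappa)$ to canonically select a target monochromatic color and to make the many color constraints that arise at each stage simultaneously satisfiable.

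I would first fix a non-principal ultrafilter $\mathcal{D}$ on $\pfin(\kappa)$ extending the filter generated by the cones $\set{v \in \pfin(\kappa)}{v \supseteq t}$, $t \in \pfin(\kappa)$; every $A \in \mathcal{D}$ is then cofinal in $\pfin(\kappa)$. Next, I would unwind $c$ one coordinate at a time via $\mathcal{D}$-majorities: set $\chi_0 := c$, and for $0 \leq j < n$ define $\chi_{j+1} : \pfin(\kappa)^{n-j-1}_< \to r$ by declaring $\chi_{j+1}(t_0,\ldots,t_{n-j-2})$ to be the unique $i \in r$ with $\set{v \in \pfin(\kappa)}{\chi_j(t_0,\ldots,t_{n-j-2},v) = i} \in \mathcal{D}$, with values on non-chains interpreted arbitrarily (the corresponding set of $v$ is $\mathcal{D}$-small by cofinality). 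The terminal constant $\chi_n \in r$ will be the target monochromatic color.

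Then I would build $f$ by recursion on $|s|$. Given $f(t)$ for every $t \subsetneq s$, set $U_s := s \cup \bigcup_{t \subsetneq s} f(t)$, pick $\alpha_s \in \kappa \setminus U_s$, and choose $f(s)$ in the intersection
\[ \set{v \in \pfin(\kappa)}{v \supseteq U_s \cup \{\alpha_s\}} \,\cap\, \bigcap_{j<n} \bigcap_{\substack{(t_0,\ldots,t_{j-1}) \in \pfin(\kappa)^{j}_{<} \\ t_{j-1} \subsetneq s}} \set{v}{\chi_{n-j-1}(f(t_0),\ldots,f(t_{j-1}),v) = \chi_n}. \]
The recursion is driven by the invariant that, for every $j$-chain $(f(t_0),\ldots,f(t_{j-1}))$ already built in the image (with $j \geq 1$), one has $\chi_{n-j}(f(t_0),\ldots,f(t_{j-1})) = \chi_n$; combined with the definition of $\chi_n$ for the $j = 0$ case, this forces each set in the intersection to be $\mathcal{D}$-large, because by the defining $\mathcal{D}$-majority property of $\chi_{n-j}$, the set $\set{v}{\chi_{n-j-1}(f(t_0),\ldots,f(t_{j-1}),v) = \chi_n}$ lies in $\mathcal{D}$ precisely when the invariant holds for the chain $(f(t_0),\ldots,f(t_{j-1}))$, and this was arranged at the earlier stage $t_{j-1}$. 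Only finitely many such chains lie below the finite set $s$, so the intersection is a finite intersection of $\mathcal{D}$-large sets, hence itself $\mathcal{D}$-large and nonempty; choosing $f(s)$ in it preserves the invariant.

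To finish, I would verify: $f$ is strictly increasing, since $\alpha_s \in f(s) \setminus U_s$ while $f(t) \subseteq U_s$ for every $t \subsetneq s$; $f$ is cofinal, since $f(s) \supseteq s$; and the invariant applied at stage $s_{n-1}$ with $j = n-1$ to any $n$-chain $(s_0,\ldots,s_{n-1})$ yields $c(f(s_0),\ldots,f(s_{n-1})) = \chi_0(f(s_0),\ldots,f(s_{n-1})) = \chi_n$. The main obstacle the ultrafilter is designed to overcome is the compatibility of the many color constraints at each recursive step: finite intersections of cofinal subsets of $\pfin(\kappa)$ need not be cofinal, nor even nonempty, so a direct greedy construction would stall. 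The filter property of $\mathcal{D}$ precisely upgrades ``cofinal'' to the filter-closed condition ``$\mathcal{D}$-large'', which is stable under finite intersections and therefore keeps the recursion going.
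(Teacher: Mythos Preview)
Your proof is correct and is essentially the same argument as the paper's, translated from the language of finitely satisfiable types into that of ultrafilters. The paper picks a type $q \in \Sfs{M}{N}$ extending the cofinal filter and reads off the target colour from a Morley sequence $\bar a \vDash q^{(n)}|_M$; your ultrafilter $\mathcal D$ is exactly such a $q$, and your iterated majorities $\chi_j$ are the $q^{(n)}$-computation of the colour. Both then build $f$ by recursion on $|s|$, using finite satisfiability (respectively the filter property of $\mathcal D$) to satisfy the finitely many constraints below $s$.
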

\begin{proof}
  \newcommand{\emptychain}{\left<\right>}

  Denote by $M$ the structure $(\pfin(\kappa),\subseteq, \sequence{P_k}{k<r})$ where $P_k = c^{-1}(k) \subseteq M^{n}_{<}$ for $k<r$. Let $N \succ M$ be an $|M|^+$-saturated extension. Let $\pi=\set{x\supsetneq s}{s \in M}$. As $\pi$ is finitely satisfiable in $M$, there is $q \in \Sfs{M}{N}$ extending $\pi$. Let $(a_{n-1},\dots,a_{0}) \vDash q^{(n)}|_{M}$ and let $\bar{a} = (a_0, \dots, a_{n-1})$. Note that $\bar{a} \in N^n_<$ (because for all $a \vDash q|_M$ and all $b \in M$, $b \subsetneq a$), and hence for some $k<r$, $\bar{a} \in P^N_k$. We claim that this colour $k$ works.



  For $m<\omega$, let $S_m = \set{s \in \pfin(\kappa)}{|s|=m}$. By recursion on $m<\omega$ we define $f\restriction S_m$ such that for any $s \in S_m$:

  \begin{enumerate}[(i)]
    \item for any $1\leq i\leq n$ and $i$-chain $s_0 \subsetneq  \ldots  \subsetneq  s_{i-1} = s$, $(f(s_0),\dots,f(s_{i-1}),a_i,\dots,a_{n-1}) \in P_k^N$.
    \item $f(s) \supseteq s$ and $f(s) \supsetneq f(t)$ for all $t \subsetneq s$.
  \end{enumerate}

  The construction is possible because $q$ is finitely satisfiable in $M$ and since there only finitely many conditions to fulfill for each $s \in \pfin(\kappa)$ (and because of the choice of $\bar{a}$):
  given $s \in S_m$ and a chain as in (i), by induction we have $q(x) \vdash P_k(f({s_0}),\dots,f(s_{i-2}),x,a_i,\dots,a_{n-1})$; since there are only finitely many such chains to consider and also $q(x) \vdash x \supsetneq f(t)\cup s$ for any $t \subsetneq s$, we can find $f(s)$ by finite satisfiability.


  Now the $i=n$ case of (i) implies that the image under $f$ of any $n$-chain has colour $k$. Meanwhile (ii) implies that $f$ is cofinal, and that $f$ is strictly increasing.
\end{proof}

\subsection{The proofs of superdensity and of \texorpdfstring{\hyperref[thm:rounded average of compressible intro]{\cref{thm:rounded average of compressible intro}}}{\autoref{thm:rounded average of compressible intro}}}\label{sec:rounded average proof}

In this section we will prove \cref{thm:rounded average of compressible intro}, by proving superdensity of compressible types in a uniform way.

\begin{proposition}[Superdensity of $\star$-compressible types] \label{prop:superdensity}
  Define $\ksd : \N^2 \rightarrow  \N$ by $\ksd(1,d) := \kc(d) + 2d + 2$,
  and $\ksd(n,d) := n\cdot \ksd(1,2^{\Bvc(n,2^{d+1})+1})$ for $n\neq 1$ (see \cref{def:binom vc}).

  Let $\phi(x,y)$ be a formula, $d \in \N$, and assume that $\vc(\phi)\leq d$.
  Let $A \subseteq  \U^x$.
  Let $\pi(y)$ be a (small) partial type.
  Let $b \in  \pi(\U)$.
  For $0<n \in \N$, let $S = S_{\phiopp\down \ksd(n,d)}^\pi(A)$.

  Then \hyperlink{superdense}{$(\dagger)_{b,n,S}$} holds.

\end{proposition}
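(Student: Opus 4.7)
The plan is to prove the statement by induction on $n$, with the base case being the main technical work. The first step is to reformulate the condition $(\dagger)_{b,n,S}$: extending $q$ to a global coheir $q^* \in \Sdfs{\bar x}{A}{\U}$ and taking a realisation $\bar a = (a_0,\ldots,a_{n-1})$ in a bigger monster $\U' \supsetneq \U$, the condition $q(\bar x) \otimes p(y) \vdash \bigwedge_{i<n}(\phi(x_i,y) \leftrightarrow \phi(x_i,b))$ is equivalent to saying that for $c \vDash p$ (viewed in $\U'$), the truth value of $\phi(a_i,c)$ equals the truth value $\eps_i \in \{0,1\}$ that $q^*$ assigns to $\phi(x_i,b)$, for every $i < n$.

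For the base case $n=1$, I set $\pi^+(y) := \pi(y) \cup \{\phi(a,y)^{\eps_b}\}$, which is consistent because $b$ realises it. I apply \cref{thm:infinite CCT compression} to the closed subspace $\mathcal{C} := S_{\phiopp}^{\pi^+}(A) \subseteq 2^A$, whose VC-dimension is at most $\vc(\phi) \leq d$, to obtain $p^+ \in S_{\phiopp\down \kc(d)}^{\pi^+}(A)$. Any realisation $c \vDash p^+$ then satisfies $\phi(a,c)^{\eps_b}$, so the $q$-matching condition holds. It remains to upgrade the compressibility from modulo $\pi^+$ to modulo $\pi$ at the cost of at most $2d+2$ extra parameters. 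Given $A_0 \subseteq_\fin A$ and $A_1 \subseteq A$ with $|A_1| \leq \kc(d)$ and $p^+|_{A_1} \cup \pi \cup \{\phi(a,y)^{\eps_b}\} \vdash p^+|_{A_0}$, I use finite satisfiability of $q^*$ in $A$ together with Sauer-Shelah on the $\phi$-traces over $A_0 \cup A_1$ (and \cref{prop:Ramsey for Pfin} to ensure a uniform choice of $A_2$ across varying $A_0$) to find $A_2 \subseteq A$ of size at most $2(d+1)$ that encodes the $\phi$-behaviour of $a$ sufficiently to replace the external constraint: $p^+|_{A_1 \cup A_2} \cup \pi \vdash p^+|_{A_0}$, giving $p^+ \in S_{\phiopp\down \ksd(1,d)}^\pi(A)$.

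For the inductive step $n > 1$, I reduce to the $n=1$ case via the aggregated formula $\psi(\bar x,y) := \bigwedge_{i<n} \phi(x_i,y)$. By \cref{lem:vcBool} applied to $\phiopp$ (whose VC-dimension is $< 2^{d+1}$ by \cref{rem:bound on vc-dimension of dual}), $\vc(\psi^{\opp}) \leq \Bvc(n,2^{d+1})$, and hence $\vc^*(\psi^{\opp}) = \vc(\psi) < 2^{\Bvc(n,2^{d+1})+1} = d'$. Applying the base case to $\psi$ with bound $d'$, the partial type $\pi^+(y) := \pi(y) \cup \{\phi(a_i,y)^{\eps_i} : i<n\}$ (consistent, witnessed by $b$), and a $\psi$-version of $q$ viewing tuples as elements of $A^n$, yields $c$ whose $\psi^{\opp}$-type over $A^n$ lies in $S_{\psi^{\opp}\down \ksd(1,d')}^\pi(A^n)$ and whose realisation satisfies the coordinatewise matching (via $\bar a \in (\U')^{\bar x}$ with $\psi$ applied to diagonal-like enrichments of the compression scheme). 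Converting compression from tuples in $A^n$ back to elements of $A$: each tuple-parameter consists of $n$ coordinates in $A$, so $\tp_{\phiopp}(c/A)$ is $n \cdot \ksd(1,d')$-compressible modulo $\pi$, which is exactly $\ksd(n,d)$ by definition.

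The main obstacle is the upgrade step in the base case: eliminating the single external constraint $\phi(a,y)^{\eps_b}$ from a $\pi^+$-compression scheme while introducing at most $2d+2$ parameters from $A$. This requires combining the finite satisfiability of $q^*$ with a VC-theoretic trace-counting argument to locate positive and negative witnesses in $A$ whose combined $\phi$-behaviour captures that of $a$ on the relevant $p^+$-instances, and using \cref{prop:Ramsey for Pfin} to guarantee a uniform choice of $A_2$ as $A_0$ ranges over finite subsets. The inductive step additionally requires care in ensuring that the coordinatewise matching (which is strictly stronger than $\psi$-matching) is obtained from the base-case application to $\psi$.
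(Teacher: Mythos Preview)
Your reduction from general $n$ to $n=1$ is close to the paper's, but you use $\psi(\bar x,y)=\bigwedge_i\phi(x_i,y)$ rather than $\phi'(\bar x,y)=\bigwedge_i\phi(x_i,y)^{\eps_i}$ with the signs built in. Only the latter lets the single $\psi$-matching condition $q'\otimes p'\vdash\phi'(\bar x,y)$ yield coordinatewise matching; with your $\psi$, matching $\psi(\bar x,y)\leftrightarrow\psi(\bar x,b)$ is strictly weaker. You also skip the nontrivial step (a separate Claim in the paper) that the resulting $(\phi')^{\opp}$-type over $A^n$ implies a \emph{complete} $\phiopp$-type over $A$; this uses that $p'(y)\vdash\phi'(a_0,\ldots,a_{n-1},y)$ for some witness tuple in $A^n$ and then substitutes an arbitrary $a\in A$ into one coordinate.

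The serious gap is your base-case ``upgrade step''. You obtain $p^+$ that is $\kc(d)$-compressible modulo $\pi\cup\{\phi(a,y)^{\eps_b}\}$ with $a\vDash q^*$ living outside $\U$, and then claim to eliminate this external constraint at the cost of $2d+2$ parameters from $A$ via ``finite satisfiability together with Sauer--Shelah on traces and \cref{prop:Ramsey for Pfin}''. This is not an argument. Finite satisfiability lets you, for each fixed $A_0,A_1$, find some $a^*\in A$ whose $\phi$-behaviour mimics $a$ on the relevant formulas; but there is no reason $\phi(a^*,y)^{\eps_b}$ lies in $p^+$, so you cannot add it to the compression scheme, and no bounded set $A_2$ is produced. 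Sauer--Shelah bounds the number of $\phi$-traces on a finite set, but that does not translate into replacing the single external definable set $\phi(a,\U^y)^{\eps_b}$ by a bounded Boolean combination of sets $\phi(a',\U^y)$ with $a'\in A$ \emph{compatibly with $p^+$}. Your invocation of \cref{prop:Ramsey for Pfin} to make the choice uniform presupposes a finite colouring of $\pfin(A)$ that you have not defined.

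The paper does not attempt any such upgrade. Instead it represents $q$ internally as a limit $q=\lim_{s\to\Ffin(\kappa)}\tp_\phi(a_s/\U^y)$ with $a_s\in A$ (after reducing to $q\vdash\phi(x,b)$), applies \cref{prop:Ramsey for Pfin} to the colouring recording which Boolean combinations of $\phi(a_{s_0},y),\ldots,\phi(a_{s_{m-1}},y)$ are $\pi$-consistent, and then \emph{constructs} an explicit partial $\phiopp$-type $p_1$ that alternates maximally for $l\leq 2\vc(\phi)$ steps and is then constantly positive. The $c^\pi_m$-homogeneity makes $p_1$ $(m-1)$-compressible modulo $\pi$ directly, with no external constraint to remove; one then extends $p_1$ via \cref{cor:localComp}(ii). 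The $2d+2$ you were aiming for is exactly $m-1=2\vc(\phi)+2$, but it arises from bounded alternation, not from a trace-replacement argument.
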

\begin{proof}
  Let $\bar{x}=\varsequence{x_i}{i<n}$ and let $q(\bar{x}) \in \Sdfsl{\bar{x}}{A}{\U^y}{\phi}$.

  We first reduce to the case
  \begin{equation}\tag{*} \label{eqn:n=1 case}
    n=1 \text{ and } q \vdash  \phi(x,b).
  \end{equation}

  For $i<n$, let $\epsilon_i<2$ be such that $q \vdash \phi(x_i,b)^{\epsilon_i}$.
  Let $\phi'(\xx,y) = \bigwedge_{i<n} \phi(x_i,y)^{\eps_i}$,
  and consider the global $\phi'$-type $q'(\xx) \in S^{\xx}_{\phi'}(\U^y)$ implied by $q$,
  which is finitely satisfiable in $A$.

  Now $\vc^*(\neg\phi) = \vc^*(\phi)$, and so by \cref{lem:vcBool},
  and \cref{rem:bound on vc-dimension of dual}, we have
  $\vc(\phi') < 2^{\vc^*(\phi')+1} \leq  2^{\Bvc(n,\vc^*(\phi))+1} <
  2^{\Bvc(n,2^{\vc(\phi)+1})+1} \leq 2^{\Bvc(n,2^{d+1})+1} =: d'$. (Note that $\Bvc$ is increasing in the second variable.)
  Note that $q' \vdash \phi'(\xx,b)$.

  Let $k=\ksd(1,d')$.
  Assuming the proposition in the \cref{eqn:n=1 case} case,
  we obtain
  $p' \in S_{(\phi')^{\opp}\down k}^\pi(A^n)$ such that
  $q'(\xx) \otimes p'(y) \vdash  \phi'(\xx,y)$.

  \begin{claim*}
    $p'$ implies a complete type $p \in S_{\phiopp\down \ksd(n,d)}^\pi(A)$ (which implies $p'$).
  \end{claim*}
  \begin{proof}
    We first show that if $a \in A$ then $p'(y) \vdash  \phi(a,y)^\eps$ for some $\eps$.

    Since $q'(\xx) \otimes p'(y) \vdash  \phi'(\xx,y)$ and since $q'$ is finitely satisfiable in $A$, there is some $(a_0,\ldots ,a_{n-1}) \in A^n$ with $p'(y) \vdash  \phi'(a_0,\ldots ,a_{n-1},y)$ (take some $c \vDash p'$, then since $\phi'(\xx,c)\in q'(\xx)$, there is some $\bar{a}:=(a_0,\dots,a_{n-1}) \in A^n$ such that $\phi'(\bar{a},c)$ holds, but as $p'$ is complete, the same is true for any such $c$).
    Now, if $p'(y) \vdash \phi'(a,a_1,\dots,a_{n-1},y)$ then $p'(y) \vdash \phi(a,y)^{\epsilon_0}$. If $p'(y) \vdash \neg \phi'(a,a_1,\dots,a_{n-1},y)$ then by the choice of $(a_0,\dots,a_{n-1})$, necessarily $p'(y)\vdash \phi(a,y)^{1-\epsilon_0}$.

    So $p'$ implies a complete type $p \in S_{\phiopp}^\pi(A)$. Let $A_0 \subseteq A$ be a finite subset. Then for some finite $A_0' \subseteq A^n$, $p'|_{A_0'} \vdash p|_{A_0}$. By $k$-compressibility of $p'$ modulo $\pi$, there is some $A_1' \subseteq A^n$ of size $k$ such that $p'|_{A_1'}\vdash_\pi p'|_{A_0'}$. Let $A_1 = \set{a\in A}{a \text{ appears in some } \bar{a}\in A_1'}$. Then $|A_1| \leq nk$ and $p|_{A_1}\vdash p'|_{A_1'}\vdash_{\pi} p|_{A_0}$, so $p$ is $nk$-compressible modulo $\pi$. Since $nk = \ksd(n,d)$, we are done.
  \end{proof}
  Now, $q(\xx) \otimes p(y) \vdash  q'(\xx) \otimes p'(y)$ and $q'(\xx) \otimes p'(y) \vdash  \phi'(\xx,y) =  \bigwedge_i \phi(x_i,y)^{\eps_i}$ so $p$ is as required in \hyperlink{superdense}{$(\dagger)_{b,n,S}$}.

  It remains to prove the proposition assuming \cref{eqn:n=1 case}, so assume that $n=1$ and $q(x) \vdash  \phi(x,b)$.

  If $A$ is finite then $q$ is realised in $A$, and we conclude (as in \cref{cor:rounded average of compressible finite domain}) by
  \cref{cor:localComp}(ii) applied to $\phiopp$ (note that $\kc(d) + 2d + 2 \geq \kc(d)+1$ which would be enough in this case).
  So suppose $\kappa := |A| \geq  \aleph_0$.

  Let $\Ffin(\kappa)$ be the filter on $\pfin(\kappa)$ generated by $\set{X_s}{s \in \pfin(\kappa)}$ where $X_s = \set{t\in \pfin(\kappa)}{t \supseteq s}$.

  By \cite[Lemma~2.9]{simon-invariant},
  $q$ is the limit of a sequence of types realised in $A$:
  \[q = \lim_{s\rightarrow \Ffin(\kappa)}(\tp_\phi(a_s/\U^y))\]
  where $a_s \in A$. (Note that in \cite[Lemma~2.9]{simon-invariant}, the type is over a model, but the same statement, with the same proof, works also over a set.)

  This means that for any $c \in \U^y$, $\phi(x,c) \in q$ iff $\set{s}{\phi(a_s,c)} \in \Ffin(\kappa)$ iff for some $s \in \pfin(\kappa)$, $\phi(a_t,c)$ holds for all $t \supseteq s$. (Since $q$ is complete, in fact the left-to-right implication suffices: $q = \lim_{s\rightarrow \Ffin(\kappa)}(\tp_\phi(a_s/\U^y))$ iff we have that $\set{s}{\phi(a_s,c)} \in \Ffin(\kappa)$ whenever $\phi(x,c) \in q$.)

  Since $q \vdash  \phi(x,b)$, we may assume that $\vDash  \phi(a_s,b)$ for all $s$ (indeed, if $s_0$ is such that $\phi(a_s,b)$ for all $s \supseteq s_0$, then we can ensure this by replacing $a_s$ with $a_{s\cup s_0}$).

  Let $m= 2\vc(\phi)+3$.
  Let $c^\pi_m$ be the $2^{2^m}$-colouring of $m$-chains in $\pfin(\kappa)$ indicating which Boolean combinations of the corresponding $m$ instances of $\phi$ are consistent with $\pi$, i.e., $c^\pi_m(s_0,\ldots ,s_{m-1}) = \set{\sequence{\epsilon_i}{i<m} \in 2^m}{\bigwedge_{i<m} \phi(a_{s_i},y)^{\eps_i} \not\vdash _\pi \bot}$.
  By \cref{prop:Ramsey for Pfin}, we may assume that $c^\pi_m$ is constant; indeed, if $f$ is as in \cref{prop:Ramsey for Pfin}, we may replace $a_s$ with $a_{f(s)}$, and then $q$ will still be the limit since $f$ is cofinal. We will refer to the property that $c^\pi_m$ is constant as  \defn{$c^\pi_m$-homogeneity}.

  Identify $i \in \N$ with $\{0,\ldots ,i-1\} \in \pfin(\kappa)$.
  Take a $\phiopp$-type $p_0(y) \in S_\phiopp^\pi((a_i)_{i<m})$ which first strictly alternates maximally and then is constantly true;
  i.e., $p_0(y) \vdash  \phi(a_i,y) \leftrightarrow  \neg \phi(a_{i+1},y)$ for $i < l$
  and $p_0(y) \vdash  \phi(a_i,y)$ for $i \in [l,m)$,
  and $l<m$ is maximal such that such a type exists.
  Note that $\tp(b/(a_i)_{i<m})$ is of this form with $l=0$, so some such $p_0$ exists (here we use the fact that $b \vDash \pi$).
  By $c^\pi_m$-homogeneity (in fact $c^\pi_{\vc(\phi)+1}$-homogeneity is enough) and the usual argument for bounding alternation number (see \cite[Lemma~2.7]{simon-NIP}), we have $l \leq  2\vc(\phi) < m-2$. 

  Define
  \[p_1(y) = p_0(y)|_{\{a_0,\ldots ,a_l\}} \cup \set{\phi(a_s,y)}{l \subseteq  s \in \pfin(\kappa)},\]
  and let $A'= \{a_0,\ldots ,a_l\} \cup \set{a_s}{l \subseteq  s \in \pfin(\kappa)}$ be the domain of $p_1$.

  \begin{claim*}
    $p_1 \in S_{\phiopp\down m-1}^\pi(A')$ and in particular is consistent with $\pi$.
  \end{claim*}
  \begin{proof}
    Suppose $A'_0 \subseteq_{\fin} A'$. Let $s_1 \in \pfin(\kappa)$ strictly contain all sets of the form $l\cup s \in \pfin(\kappa)$ such that $a_s \in A_0'$. Let $s_1 \subsetneq  \ldots  \subsetneq  s_{m-l-2}$ be an $m-l-2$-chain starting with $s_1$. Let
    \[p_0'(y) = p_1(y)|_{\set{a_i}{i \in [0,l] \cup \{s_1,\ldots ,s_{m-l-2}\}}}.\]

    Then by $c^\pi_m$-homogeneity, $p_0'$ is consistent with $\pi$ since $p_0$ is. 

    Suppose $l\subsetneq s_0 \subsetneq s_1$. We claim that $p_0' \vdash _\pi \phi(a_{s_0},y)$.


    Otherwise, by $c^\pi_m$-homogeneity,
    \[p_1(y)|_{\set{a_i }{i \in [0,l] \cup [l+2,m)}}
    \cup \{ \neg\phi(a_{l+1},y) \}\]
    is consistent with $\pi$. Since $l+2 < m$, this contradicts the maximality of $l$.

    In particular, $p_0' \vdash_\pi p_1(y)|_{A_0'}$ and the latter is consistent with $\pi$.
  \end{proof}

  Now by \cref{cor:localComp}(ii) applied to $\phiopp$,
  $p_1$ extends to $p \in S_{\phi\down(m-1) + \kc(d)}^\pi(A)$. Since $m-1+ \kc(d) \leq \ksd(1,d)$, to conclude it is enough to show that $q(x) \otimes p(y) \vdash  \phi(x,y)$.
  But this holds
  since $q = \lim_{s\rightarrow \Ffin(\kappa)}(\tp_\phi(a_s/\U^y))$ and $p(y) \vdash  \phi(a_s,y)$ for all $s \supseteq l$.
\end{proof}

We can now deduce \cref{thm:rounded average of compressible intro}.

\begin{theorem} \label{thm:rounded average of compressible}
  Let $d \in \N$ and $\alpha \in (1/2,1]$.
  Then there exist $n$ and $k$ depending only on $d, \alpha$ such that the following holds.
  If $\phi(x,y)$ is a formula such that $\vc(\phi)\leq d$, then
  for any $A \subseteq  \U^x$ and a (small) partial type $\pi(y)$,
  any $p \in S_{\phiopp}^\pi(A)$ is the $\alpha$-rounded average of $n$ types in $S_{\phiopp\down k}^\pi(A)$.

  Namely, we may take $n:= \min\set{m\in \N}{(1-\alpha)m>2^{d+1}-1}$ and $k:=\ksd(\Npq(n,2^{d+1}),d)$.
\end{theorem}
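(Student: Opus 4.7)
The approach is a direct synthesis of the two main results of this section: \cref{prop:superdensity} (superdensity of $\star$-compressible $\phiopp$-types, with an explicit compression bound) and \cref{lem:superdensity sufficient condition for rounded average} (superdensity implies representability as an $\alpha$-rounded average). The theorem is essentially the packaging of these two facts into one statement, with matching bookkeeping on the constants.

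Concretely, I would fix $d \in \N$ and $\alpha$ and set $n := \min\{m \in \N : (1-\alpha)m > 2^{d+1}-1\}$, $N := \Npq(n, 2^{d+1})$, and $k := \ksd(N, d)$, exactly as prescribed in the statement. Given any $\phi(x,y)$ with $\vc(\phi) \leq d$, any $A \subseteq \U^x$, any small partial type $\pi(y)$, and any $p \in S_{\phiopp}^\pi(A)$, I would pick a realisation $b \vDash p$ (so $b \in \pi(\U)$) and set $S := S_{\phiopp \down k}^\pi(A) \subseteq S_{\phiopp}(A)$. Applying \cref{prop:superdensity} with the integer $N$ (in place of the ``$n$'' appearing in that proposition) then yields that $(\dagger)_{b, N, S}$ holds; this is precisely the step that forces $k = \ksd(N,d)$. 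Finally, \cref{lem:superdensity sufficient condition for rounded average}, applied to the pair $(n,N)$ (where now ``$n$'' plays the role of the number of types being averaged, and $N$ is the arity of the tuple appearing in $(\dagger)$) together with the same set $S$, delivers that $p = \tp_\phiopp(b/A)$ is the $\alpha$-rounded average of $n$ elements of $S$, which is the claim.

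I anticipate no serious obstacle: the only real subtlety is the terminological clash around the letter ``$n$'' between \cref{prop:superdensity} (where it is the arity of the tuple in $(\dagger)$) and \cref{lem:superdensity sufficient condition for rounded average} (where it is the number of types being averaged), and making sure the explicit constants $n$, $N$, $k$ align. All the substantive work -- the infinite Chen--Cheng--Tang compression, the $(p,q)$-theorem averaging, and the Ramsey-on-$\pfin(\kappa)$ input -- has already been absorbed into the two cited results, so the proof itself reduces to a short paragraph of verification.
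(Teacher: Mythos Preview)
Your proposal is correct and matches the paper's proof essentially verbatim: the paper's argument is precisely to invoke \cref{lem:superdensity sufficient condition for rounded average} with $N = \Npq(n,2^{d+1})$ and $S = S_{\phiopp\down \ksd(N,d)}^\pi(A)$, and then verify $(\dagger)_{b,N,S}$ via \cref{prop:superdensity}. The only minor imprecision is the phrase ``pick a realisation $b \vDash p$ (so $b \in \pi(\U)$)''; strictly one must pick $b \vDash p \cup \pi$, which exists since $p \in S_{\phiopp}^\pi(A)$ means $p$ is consistent with $\pi$.
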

\begin{proof}
  By \cref{lem:superdensity sufficient condition for rounded average} it is enough to show \hyperlink{superdense}{$(\dagger)_{b,N,S}$} where $N= \Npq(n,2^{d+1})$ and $S = S_{\phiopp\down \ksd(N,d)}^\pi(A)$, which follows  by \cref{prop:superdensity}.
\end{proof}

\begin{remark}
  By \cref{rem:ravg of distinct}, in the context of \cref{thm:rounded average of compressible}, if $p$ is not $k$-compressible then it is an $\alpha$-rounded average of $n$ \emph{distinct} types in $S_{\phiopp\down k}^\pi(A)$.
\end{remark}

We give some immediate corollaries.

\begin{corollary}\label{cor:many compressible types}
  If $\phi(x,y)$ is NIP and $A \subseteq \U^y$ then $|S_{\phiopp \down \star}(A)| + \aleph_0 = |S_{\phiopp}(A)|+\aleph_0$.

  Moreover,  $|S_{\phiopp \down k}(A)| + \aleph_0 = |S_{\phiopp}(A)|+ \aleph_0$ for $k$ from \cref{thm:rounded average of compressible} and $S_{\phiopp \down k}(A)$ is finite iff $S_{\phiopp}(A)$ is.
\end{corollary}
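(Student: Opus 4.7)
The proof is a short cardinal-arithmetic extraction from \cref{thm:rounded average of compressible}. My plan is to fix any $\alpha \in (1/2,1)$ and let $n,k$ be the corresponding constants supplied by that theorem (applied with $d = \vc(\phi)$, using that $\phi$ being NIP makes $\vc(\phi)$ finite). The theorem then says every $p \in S_\phiopp(A)$ has the form $\ravga{\varsequence{p_i}{i<n}}{\alpha}$ for some $p_0,\ldots,p_{n-1} \in S_{\phiopp\down k}(A)$. Since $\ravga{\varsequence{p_i}{i<n}}{\alpha}$ is, by definition, a set of formulas determined by the tuple $(p_0,\ldots,p_{n-1})$, the assignment of a tuple to its $\alpha$-rounded average gives a well-defined surjection
\[S_{\phiopp\down k}(A)^{n} \twoheadrightarrow S_\phiopp(A),\]
whence $|S_\phiopp(A)| \leq |S_{\phiopp\down k}(A)|^{n}$.

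Combining this with the obvious chain of inclusions $S_{\phiopp\down k}(A) \subseteq S_{\phiopp\down \star}(A) \subseteq S_\phiopp(A)$ yields
\[|S_{\phiopp\down k}(A)| \;\leq\; |S_{\phiopp\down \star}(A)| \;\leq\; |S_\phiopp(A)| \;\leq\; |S_{\phiopp\down k}(A)|^{n}.\]

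Finally I would split into the two trivial cases. If $S_{\phiopp\down k}(A)$ is infinite, then $|S_{\phiopp\down k}(A)|^{n} = |S_{\phiopp\down k}(A)|$, so the chain collapses to an equality of infinite cardinals and adding $\aleph_0$ preserves this. If instead $S_{\phiopp\down k}(A)$ is finite, then $|S_{\phiopp\down k}(A)|^{n}$ and hence $|S_\phiopp(A)|$ is finite too; all three cardinals are then finite, so each becomes $\aleph_0$ after adding $\aleph_0$, and in particular $S_{\phiopp\down k}(A)$ is finite iff $S_\phiopp(A)$ is. There is no genuine obstacle; the work of the corollary has already been done in \cref{thm:rounded average of compressible}.
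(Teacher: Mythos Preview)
Your proof is correct and is exactly the argument the paper has in mind; the corollary is stated there without proof as an immediate consequence of \cref{thm:rounded average of compressible}. One tiny phrasing issue: the $\alpha$-rounded average of an arbitrary $n$-tuple of types need not be a complete (or even consistent) $\phiopp$-type, so the displayed map is not literally a function into $S_\phiopp(A)$; it is cleaner to say that choosing for each $p$ a witnessing tuple yields an injection $S_\phiopp(A)\hookrightarrow S_{\phiopp\down k}(A)^n$, which gives the same inequality.
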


We can also improve \cref{lem:stable Comp = Isol Local} to add another equivalence:
\begin{corollary}
  The following are equivalent for an NIP formula $\phi(x,y)$:
  \begin{enumerate}[(i)]
    \item $\phi$ is stable.
    \item For any model $M\vDash T$ and any $k\in \N$, any $p \in S_{\phi\down k} (M^y)$ is isolated.
  \end{enumerate}
\end{corollary}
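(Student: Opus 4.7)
I would handle $(i) \Rightarrow (ii)$ immediately by citing the analogous implication of \cref{lem:stable Comp = Isol Local}, with $B$ specialised to $M^y$.

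The substance is in $(ii) \Rightarrow (i)$, which I would prove by contraposition via a cardinality comparison. Assuming $\phi$ is NIP but not stable, \cref{lem:stable Comp = Isol Local} yields $k$ and $\bar\epsilon \in \{0,1\}^k$ such that $\theta_{\bar\epsilon}(x,\bar y) := \bigwedge_{j<k}\phi(x,y_j)^{\epsilon_j}$ has SOP. Set $\kappa := |T|+\aleph_0$. By compactness I can realise a strictly decreasing chain $(\bar b^r)_{r \in I}$ in $\U^{\bar y}$ indexed by any linear order $I$; I would pick $I$ of cardinality $\kappa$ with at least $\kappa^+$ Dedekind cuts (available since $\operatorname{ded}(\kappa)>\kappa$ for every infinite cardinal $\kappa$, and for countable $T$ one may take $I = \mathbb{Q}$). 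By Löwenheim--Skolem, there is $M \models T$ of cardinality $\kappa$ containing every coordinate of every $\bar b^r$.

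Each Dedekind cut $C = (C_1,C_2)$ of $I$ yields a consistent partial type
\[\pi_C(x) := \set{\theta_{\bar\epsilon}(x,\bar b^r)}{r \in C_1}\cup\set{\neg\theta_{\bar\epsilon}(x,\bar b^r)}{r \in C_2}\]
over $M^y$; distinct cuts disagree on some $\theta_{\bar\epsilon}$-instance, and hence on some $\phi$-instance, so completing each $\pi_C$ to a $\phi$-type exhibits at least $\kappa^+$ distinct points of $S_\phi(M^y)$. Then \cref{cor:many compressible types} applied to $\phi^{\opp}$ (which is NIP by \cref{rem:bound on vc-dimension of dual}) upgrades this to $|S_{\phi\down\star}(M^y)| + \aleph_0 = |S_\phi(M^y)| + \aleph_0 \geq \kappa^+$. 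On the other hand every isolated $\phi$-type over $M^y$ is witnessed by a $\phi$-formula over $M$, and there are at most $\kappa$ such formulas. Hence some $p \in S_{\phi\down\star}(M^y)$ fails to be isolated; taking $k$ that witnesses its $\star$-compressibility contradicts (ii).

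The main obstacle is ensuring $|S_\phi(M^y)|$ strictly exceeds $|M|$, and the bound $\operatorname{ded}(\kappa)>\kappa$ is exactly what allows this to be arranged at cardinality $\kappa = |T|+\aleph_0$. Once the count is in place, \cref{cor:many compressible types} transfers the excess to $\star$-compressible types, and the straightforward $|M|$-bound on isolated types closes the argument.
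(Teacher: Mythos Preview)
Your proof is correct and follows essentially the same route as the paper's: both directions use \cref{lem:stable Comp = Isol Local} for $(i)\Rightarrow(ii)$, and for $\neg(i)\Rightarrow\neg(ii)$ produce a model $M$ with $|S_\phi(M^y)|>|M|$, invoke \cref{cor:many compressible types} on $\phi^{\opp}$ to get $|S_{\phi\down k}(M^y)|>|M|$ for some $k$, and conclude since isolated $\phi$-types number at most $|M|$. The only difference is cosmetic: the paper cites \cite[Theorem~8.2.3]{TentZiegler} for the existence of such an $M$, whereas you unpack that citation by hand via the SOP formula $\theta_{\bar\epsilon}$ and a $\operatorname{ded}(\kappa)>\kappa$ argument.
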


\begin{proof}
  (i) implies (ii) follows from \cref{lem:stable Comp = Isol Local}.

  $\neg$(i) implies $\neg$(ii): since $\phi$ is not stable, there is some (infinite) model $M$ such that $|S_\phi(M)|>|M|$ (see e.g., \cite[Theorem~8.2.3]{TentZiegler}). By \cref{cor:many compressible types} (applied to $\phiopp$) $|S_{\phi\down k}(M^y)|>|M|$ for some $k\in\N$. We conclude, since there are at most $|M|$ isolated $\phi$-types over $M$.
\end{proof}

\begin{remark} \label{rem:superdensity under stability}
  Suppose $\phi(x,y)$ is stable. Then we can replace $\ksd$ in \cref{prop:superdensity} by a linear (as opposed to exponential, see \cref{rem:Bvc(n k) >= n k}) bound in terms of $n$ with a simpler proof.

  Let $\bar{x}=\varsequence{x_i}{i<n}$, $q(\bar{x}) \in \Sdfsl{\bar{x}}{A}{\U^y}{\phi}$, $\pi(y)$ and $b \in \pi(\U)$ be as there. For $i<n$, let $q_i(x_i) = q\restriction \{\phi(x_i,y)\}$. As $q_i$ is finitely satisfiable in $A$, by \cite[Exercise~8.3.6]{TentZiegler}, $q_i$ is definable by a Boolean combination of instances of $\phiopp$ over $A$ (the exercise assumes that $T$ is stable but this is not necessary). The size of this Boolean combination depends only on $\phi$ (really only on the size of a maximal witness for the order property). Hence there is $l$ depending only on $\phi$, and $a_{i,j} \in A$, $\epsilon_{i,j}<2$ for $j<l$, such that for some formula $\theta_i(y)$ of the form $\bigwedge_{j<l} \phi(a_{i,j},y)^{\epsilon_{i,j}}$, $\theta_i(b)$ holds and if $b'\vDash \theta_i(y)$ then $\phi(x_i,b)\in q$ iff $\phi(x_i,b') \in q$.
  Let $\theta(y) = \bigwedge_{i<n} \theta_i$. 
  Note that $\theta(b)$ holds, so that $\theta$ is consistent with $\pi$.

  By \cref{cor:localComp}(ii) applied to $\phiopp$ (here we could use also the stable counterpart, using the 2-rank), there is some $p(y) \in S_{\phiopp\down ln+\kc(\vc(\phi))}^\pi(A)$ such that $p(y) \vdash \theta(y)$. Thus,
  \[q(\bar{x}) \otimes p(y) \vdash  \bigwedge_{i<n} (\phi(x_i,y) \leftrightarrow \phi(x_i,b)).\]

  Since $q$ was arbitrary, we get \hyperlink{superdense}{$(\dagger)_{b,n,S}$} for $S: = S_{\phiopp\down ln+\kc(\vc(\phi))}^\pi(A)$.
\end{remark}

\subsection{Local uniform honest definitions}

In this section we will prove uniformity of honest definitions for NIP formulas.


\begin{definition}\label{def:honest definitions}
  \cite[Definition~3.16 and Remark~3.14]{simon-NIP} Suppose $\phi(x,y)$ is a formula, $A\subseteq M^{x}$
  is some set and $b \in \U^y$. Say that a formula $\psi(x,z)$ over $\emptyset$ (with $z$ a tuple of variables each of the same sort as $x$) 
   is an \defn{honest definition} of $\tp_\phiopp(b/A)$ if for
  every finite $A_{0}\subseteq A$ there is some $c\in A^{z}$ such
  \[\phi(A_0,b) \subseteq \psi(A,c) \subseteq \phi(A,b).\]
  In other words, for all $a\in A$, if $\psi(a,c)$ holds then so does $\phi(a,b)$
  and for all $a\in A_{0}$ the other direction holds: if $\phi(a,b)$ holds
  then $\psi(a,c)$ holds.
\end{definition}

It is proved in \cite[Theorem 6.16]{simon-NIP}, \cite[Theorem 11]{CS-extDef2}
that if $T$ is NIP then for every $\phi(x,y)$ there is a formula
$\psi(y,z)$ that serves as an honest definition for any type in $S_{\phi}(A)$
provided that $|A|\geq 2$ (by \cite[Remark~16]{CS-extDef2} only some NIP is required of $\phi$ and formulas expressing consistency of Boolean combinations of $\vc(\phi)+1$ instances of $\phi$).
In this section we improve this by proving this result assuming only that $\phi$ is NIP.


\begin{corollary} \label{cor:luhds}
  Let $\phi(x,y)$ be NIP.
  Then there exists $\psi(x,z)$ such that
  if $A \subseteq  \U^x$ with $|A| > 1$
  and $b \in \U^y$,
  then $\psi(x,z)$ is an honest definition of $\tp_\phiopp(b/A)$.

  Namely, \[\psi(x,(\zz,\zz',\zz'')) :=
  \Maj_{i<n} \forall y \left({\bigwedge_{j<k}
  (\phi(z_{i,j},y) \leftrightarrow  (z'_{i,j} = z''_{i,j})) \rightarrow  \phi(x,y)}\right),\]
  where $n$ and $k$ are as in \cref{thm:rounded average of compressible} with $d=\vc(\phi)$ and $\alpha=1/2$.
\end{corollary}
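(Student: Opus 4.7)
The plan is to use \cref{thm:rounded average of compressible} to decompose $\tp_\phiopp(b/A)$ as a $\tfrac12$-rounded average of finitely many $k$-compressible $\phiopp$-types, then realise each compression scheme uniformly by the Shelah-style coding trick already used in \cref{lem:adding one more formula}, which is precisely what the formula $\psi$ given in the statement does.

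First I would apply \cref{thm:rounded average of compressible} with $\alpha = \tfrac12$, $d = \vc(\phi)$, and $\pi = (y=y)$, to obtain $p_0,\dots,p_{n-1} \in S_{\phiopp\down k}(A)$ with
\[ \tp_\phiopp(b/A) \;=\; \ravg{\varsequence{p_i}{i<n}}. \]
Now fix a finite $A_0 \subseteq A$. For each $i<n$, $k$-compressibility of $p_i$ yields (after padding) elements $a_{i,0},\dots,a_{i,k-1} \in A$ and signs $\eps_{i,j}<2$ such that the formula $\bigwedge_{j<k}\phi(a_{i,j},y)^{\eps_{i,j}}$ lies in $p_i$ and implies $p_i|_{A_0}$. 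I would then exploit $|A| > 1$ to pick $d_0 \neq d_1 \in A$ and define $c = (\zz,\zz',\zz'')$ by $z_{i,j} := a_{i,j}$ and $(z'_{i,j}, z''_{i,j}) := (d_0,d_0)$ if $\eps_{i,j}=1$, else $(d_0,d_1)$. With this choice, the $i$-th inner formula
\[ \theta_i(y,c_i) \;:=\; \bigwedge_{j<k}\bigl(\phi(z_{i,j},y) \leftrightarrow (z'_{i,j} = z''_{i,j})\bigr) \]
is precisely $\bigwedge_{j<k}\phi(a_{i,j},y)^{\eps_{i,j}}$, so $\theta_i(y,c_i) \in p_i$ and $\theta_i(y,c_i) \vdash p_i|_{A_0}$.

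It then remains to verify the two inclusions of \cref{def:honest definitions}. For the containment $\psi(A,c) \subseteq \phi(A,b)$, suppose $\psi(a,c)$ holds for some $a \in A$; then $\theta_i(y,c_i) \vdash \phi(a,y)$ for a majority of $i<n$, and since $\theta_i(y,c_i) \in p_i$ is a consistent Boolean combination of instances of $\phi$ and $p_i$ is a complete $\phi$-type (in $y$), this forces $\phi(a,y) \in p_i$ for such $i$; by the rounded-average identity this majority condition gives $\phi(a,y) \in \tp_\phiopp(b/A)$, i.e.\ $\vDash\phi(a,b)$. Conversely, for $\phi(A_0,b) \subseteq \psi(A,c)$, let $a \in A_0$ with $\vDash \phi(a,b)$; again by the rounded-average identity, $\phi(a,y) \in p_i$ for a majority of $i$, and for each such $i$ we have $\theta_i(y,c_i) \vdash p_i|_{A_0} \vdash \phi(a,y)$ because $a \in A_0$, so the corresponding universal clause in $\psi$ holds and $\vDash\psi(a,c)$.

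No serious obstacle is anticipated: the content is entirely front-loaded into \cref{thm:rounded average of compressible}; the only subtlety to watch is that $\theta_i(y,c_i)$ must be \emph{in} $p_i$ (not merely consistent with it) in order to pass from $\theta_i \vdash \phi(a,y)$ to $\phi(a,y) \in p_i$, which is why we take $a_{i,j} \in A_1$ with the signs dictated by $p_i$ rather than some arbitrary consistent Boolean combination, and why we need $|A|>1$ to realise both signs uniformly via the $z' = z''$ coding.
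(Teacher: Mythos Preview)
Your proof is correct and follows essentially the same approach as the paper: apply \cref{thm:rounded average of compressible} to write $\tp_\phiopp(b/A)$ as a rounded average of $k$-compressible types, code each compression witness using the $z'=z''$ trick (as in \cref{lem:adding one more formula}), and verify the two inclusions of \cref{def:honest definitions} directly from the rounded-average identity. Your explicit remark that $\theta_i(y,c_i)$ must lie \emph{in} $p_i$ (not merely be consistent with it) is exactly the point that makes the $\psi(A,c)\subseteq\phi(A,b)$ direction go through, and matches the paper's reasoning.
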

\begin{proof}

  By \cref{thm:rounded average of compressible}, $\tp_{\phiopp}(b/A)$ is the rounded average of $k$-compressible types $p_0,\ldots ,p_{n-1} \in S_{\phiopp}(A)$.

  Now we proceed as in the proof of \cref{lem:adding one more formula}: if $A_0 \subseteq_{\fin}  A$, there are $D_i = (d_{i,j})_{j<k} \subseteq  A$ for $1\leq i\leq n$ such that $p_i|_{D_i} \vdash  p_i|_{A_0}$. Let $c_0,c_1 \in A$ be distinct, let $d'_{i,j} = c_0$, and let $d''_{i,j} = c_0$ if $p_i(y) \vdash  \phi(d_{i,j},y)$ and $d''_{i,j} = c_1$ otherwise, so $p_i|_{D_i}(y)$ is equivalent to $\bigwedge_{j<k} (\phi(d_{i,j},y) \leftrightarrow  (d'_{i,j} = d''_{i,j}))$.
  Then $d := ((d_{i,j})_{ij},(d'_{i,j})_{ij},(d''_{i,j})_{ij})$ is as required:

  For each $a\in A$, $\psi(a,d)$ holds iff $|\set{i<n}{p_i(y)|_{D_i} \vdash \phi(a,y)}| > \frac{1}{2}n$. Thus, if $a \in A_0$ and $\phi(a,b)$ holds, then $|\set{i<n}{\phi(a,y) \in p_i}| > \frac{1}{2}n$, and for each such $i<n$, $p_i|_{D_i} \vdash \phi(a,y)$ (by choice of $D_i$), so $\psi(a,d)$ holds. On the other hand, if $\psi(a,d)$ holds, then clearly $|\set{i<n}{\phi(a,y) \in p_i}| > \frac{1}{2}n$, hence $\phi(a,b)$ holds.
\end{proof}


\begin{remark}
  In fact, by a Löwenheim-Skolem argument, to prove \cref{cor:luhds} we require \cref{thm:rounded average of compressible} only in the case that $A$ is countable. The proof of this case of \cref{thm:rounded average of compressible} is slightly simpler, in that we can use $\omega$ in place of $\pfin(\kappa)$ (using \cite[Lemma 2.8]{simon-invariant} instead of \cite[Lemma 2.9]{simon-invariant}), and the usual Ramsey theorem in place of \cref{prop:Ramsey for Pfin}.
\end{remark}

\begin{remark}
  If $\phi(x,y)$ is stable, one can use \cref{thm:rounded average of compressible} similarly to get a new way to see definability of $\phi$-types over arbitrary sets (since $k$-isolated types are definable).
\end{remark}

\subsection{Hypes}
\begin{definition}\label{def:hypes}
  Suppose $\phi(x,y)$ is a formula and $A \subseteq \mathcal{U}^y$. For $k\in \N$, a \defn{$k$-hype\footnote{The term ``$k$-hype'' stands for $k$-hypotype.} in $\phi$ over $A$}\footnote{As in \cref{sec:notations}, in this notation we have the partition in mind and $x$ is the first tuple in the partition, so we do not specify $x$.} is a collection $\Gamma$ of instances of $\phi$ and $\neg \phi$ over $A$ such that:
  \begin{enumerate}
    \item It is $k$-consistent: if $S\subseteq \Gamma$ is of size $\leq k$, then $S$ is consistent.
    \item For any $a \in A$, either $\phi(x,a) \in \Gamma$ or $\neg \phi(x,a) \in \Gamma$, but not both. 
  \end{enumerate}

  Suppose $\pi(x)$ is a (small) partial type. We say that $\Gamma$ is $k$-consistent with $\pi$ if in (1) we ask that $S \cup \pi$ is consistent.

  Let $S^x_{\phi,k}(A)$ be the set of $k$-hypes in $\phi$ over $A$, and $S^{\pi}_{\phi,k}(A)$ the set of $k$-hypes which are $k$-consistent with $\pi$.

  As with types, if $\Gamma$ is a $k$-hype, we use the notation $\Gamma|_{A'}$ for the restriction of $\Gamma$ to $A' \subseteq A$ with the obvious meaning.
\end{definition}

Suppose $\phi(x,y)$ is a formula, $\pi(y)$ a small partial type and $A\subseteq \U^x$. Let $M$ be a small model containing $A$ and the domain of $\pi$. We will consider the following auxiliary 2-sorted structure
\[\hat{A}=(A,S^{\pi}_{\phiopp,k}(A),R)\]
where $A$ and $S^{\pi}_{\phiopp,k}(A)$ are in distinct sorts $P,Q$ respectively and $R(x,z)$ is interpreted as $R(a,\Gamma) \Leftrightarrow \phi(a,y) \in \Gamma$ (so $R \subseteq P \times Q$). Let $\psi(x,z) = R(x,z)$.

\begin{lemma}\label{lem:k-hypes} Suppose $N \succ \hat{A}$ and $e \in Q^N$. Then:
  \begin{enumerate}
    \item If $\phi$ is NIP and $\vc(\phi) < k$, then $\psi$ is NIP and $\vc(\psi) \leq \vc(\phi)$. 
    \item Let $\Gamma_e = \set{\phi(a,y)^{N\vDash R(a,e)}}{a \in A}$. Then $\Gamma_e \in S^{\pi}_{\phiopp,k}(A)$.
    \item If $\tp_{\psi^{\opp}}(e/A) \in S_{\psi^{\opp}\down k}(A)$, then $\Gamma_e \in S^{\pi}_{\phiopp \down k}(A)$, i.e., $\Gamma_e$ is consistent with $\pi$ and moreover $k$-compressible modulo $\pi$.
  \end{enumerate}
\end{lemma}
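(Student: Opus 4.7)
The plan is to prove the three parts sequentially, all via elementariness of $\hat{A} \prec N$ combined with the defining properties of the $Q$-sort.  For (1), it suffices to show $\vc(\psi) \leq \vc(\phi)$.  Shattering an $m$-set by instances of $\psi$ is first-order expressible, so if $\vc(\psi) > \vc(\phi)$ then $\psi$ shatters some set of size $m := \vc(\phi)+1$ already in $\hat{A}$: one gets $a_1,\ldots,a_m \in A$ and, for each $S \subseteq \{1,\ldots,m\}$, some $\Gamma_S \in S^{\pi}_{\phiopp,k}(A)$ with $\phi(a_i,y) \in \Gamma_S$ iff $i \in S$.  Crucially $m \leq k$ (this is where the hypothesis $\vc(\phi)<k$ enters), so the $k$-consistency of $\Gamma_S$ with $\pi$ yields $b_S \vDash \pi$ with $\U \vDash \phi(a_i,b_S)$ iff $i \in S$, contradicting $\vc(\phi) < m$.

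For (2), completeness of $\Gamma_e$ is immediate from clause~(2) of \cref{def:hypes}.  For $k$-consistency with $\pi$, fix $a_1,\ldots,a_j \in A$ with $j \leq k$ and let $\epsilon_i$ be the truth value of $R(a_i,e)$ in $N$.  The sentence $\exists z \, \bigwedge_i R(a_i,z)^{\epsilon_i}$ holds in $N$ (witnessed by $e$), hence also in $\hat{A}$; a witness lies in $S^{\pi}_{\phiopp,k}(A)$, contains all the $\phi(a_i,y)^{\epsilon_i}$, and is $k$-consistent with $\pi$ by the very definition of the $Q$-sort.

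The substantive part is (3).  The key device is to assign to each $b \in \pi(\U)$ the element $\Gamma_b \in S^{\pi}_{\phiopp,k}(A)$ defined by $\phi(a,y) \in \Gamma_b$ iff ${\U \vDash \phi(a,b)}$; since $b$ itself realizes $\pi \cup \Gamma_b$, this is fully consistent with $\pi$, so $\Gamma_b$ really is an element of the $Q$-sort of $\hat{A}$.  Given a finite $A_0 \subseteq A$, pick $A_1 \subseteq A$ with $|A_1|\leq k$ witnessing $k$-compressibility of $\tp_{\psi^\opp}(e/A)$; I then claim $\pi \cup \Gamma_e|_{A_1} \vdash \Gamma_e|_{A_0}$.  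Indeed, if $b \vDash \pi \cup \Gamma_e|_{A_1}$ then for $a \in A_1$ we have $R^{\hat A}(a,\Gamma_b) = \epsilon_a = R^N(a,e)$ (where $\epsilon_a$ is the truth value of $R(a,e)$ in $N$), so $\Gamma_b$ and $e$ share the same $\psi^\opp$-type over $A_1$, hence (by compressibility) also over $A_0$; unwinding yields $b \vDash \Gamma_e|_{A_0}$.  This delivers $k$-compressibility modulo $\pi$, and combined with the $k$-consistency from~(2), compactness upgrades this to full consistency of $\pi \cup \Gamma_e$.  The main potential pitfall is the bookkeeping in~(3): carefully aligning the Boolean values $\epsilon_a$, the definition of $\Gamma_b$, and the identification of the $\psi^\opp$-type of $\Gamma_b$ in $\hat A$ with that of $e$ in $N$ --- which is legitimate because $\hat A \prec N$ and $A_1 \subseteq \hat A$.
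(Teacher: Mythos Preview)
Your proof is correct and follows essentially the same approach as the paper's. In all three parts you and the paper use the same mechanism: in (1), pull a shattered $(\vc(\phi)+1)$-set down to $\hat A$ and use $k$-consistency of the witnessing hypes (since $\vc(\phi)+1\leq k$) to realise each Boolean pattern by an actual $b_S\vDash\pi$, shattering for $\phi$; in (2), use elementarity to find a genuine $k$-hype in $\hat A$ matching $e$ on any $k$ coordinates; in (3), convert $b\vDash\pi$ into the element $\Gamma_b=\tp_{\phiopp}(b/A)$ of the $Q$-sort and transfer the compressibility of $\tp_{\psi^{\opp}}(e/A)$ through this correspondence, then invoke compactness for full consistency with $\pi$. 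The paper phrases (3) with $d$ and $q=\tp_{\phiopp}(d/A)$ where you write $b$ and $\Gamma_b$, but the argument is identical.
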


\begin{proof}
  (1) Suppose $C\subseteq_{\fin} A$ is of size $|C| = \vc(\phi) + 1$, and suppose that for any subset $C' \subseteq C$ there is some $\Gamma_{C'} \in S^{\pi}_{\phiopp,k}(A)$ such that for all $c\in C$, $R(c,\Gamma_{C'})$ holds iff $c \in C'$. As $k\geq |C|$, for any $C' \subseteq C$, $\set{\phi(c,y)^{R(c,\Gamma_{C'})}}{c \in C}$ is consistent; let $e_{C'}$ realise it. Then we get that $M\vDash \phi(c,e_{C'})$ iff $R(c,\Gamma_{C'})$ iff $c \in C'$. Thus $C$ is shattered by $\phi(x,y)$ and has size $\vc(\phi)+1$, contradiction

  (2) Clearly (2) in \cref{def:hypes} holds. For (1), suppose $S= \Gamma_e|_{A_0}$ has size $k$. Then, as $N$ is an elementary extension of $\hat{A}$, there is some $\Gamma \in S^{\pi}_{\phiopp,k}(A)$ such that for all $a \in A_0$, $N\vDash R(a,e)$ iff $\hat{A} \vDash R(a,\Gamma)$. In particular, $S \subseteq \Gamma$. Since $\Gamma$ is $k$-consistent, $S$ is consistent.

  For (3), suppose $A_0 \subseteq_{\fin} A$. Let $A_1 \subseteq A$ be of size $k$ such that $\tp_{\psi^{\opp}}(e/A)|_{A_1} \vdash \tp_{\psi^{\opp}}(e/A)$. Since $A_0$ is arbitrary, it is enough to show (a) that there exists $d \vDash \Gamma_e|_{A_1} \cup \pi$ (in $\mathcal{U}$) and (b) that any such $d$ satisfies $\Gamma_e|_{A_0}$. (a) follows from (2). For (b), let $q = \tp_{\phiopp}(d/A)$, so in particular, $q \in  S^{\pi}_{\phiopp,k}(A)$. Note that in $\hat{A}$, $q \vdash \tp_{\psi^{\opp}}(e/A)|_{A_1}$. Hence $q \vdash \tp_{\psi^{\opp}}(e/A)|_{A_0}$. Hence we have that $\phi(a,y)^{R(a,e)} \in q$ for any $a \in A_0$, or in other words, $d \vDash \Gamma_e|_{A_0} \cup \pi$.
\end{proof}

\begin{remark} \label{rem:qq theorem implies hypes are covered}
  Suppose $\phi(x,y)$ is NIP and $k > \vc(\phi)$. Then by the $(p,q)$-theorem (\cref{fac:(p q) theorem}) and compactness, for any $k$-hype $\Gamma$, there are $N:=\Npq(k,k)$ types $p_0, \ldots, p_{N-1}$ such that $\Gamma \subseteq \bigcup_{i<N}p_i$.
\end{remark}

The following extends \cref{rem:qq theorem implies hypes are covered}: not only are hypes covered by types, they are the rounded average of (compressible) types. It also generalises \cref{thm:rounded average of compressible} to hypes.

\begin{theorem} \label{thm:k-hypes majority average}
  Let $d \in \N$ and $\alpha \in (1/2,1]$.
  Then there exist $n$ and $k$ depending only on $d, \alpha$ such that the following holds.
  If $\phi(x,y)$ is a formula such that $\vc(\phi)\leq d$, then
  for any $A \subseteq  \U^x$ and a (small) partial type $\pi(y)$,
  any $k$-hype $\Gamma \in S_{\phiopp,k}^\pi(A)$ is the $\alpha$-rounded average of $n$ types in $S_{\phiopp\down k}^\pi(A)$.

  In fact, $k$ and $n$ can be taken to be the numbers from \cref{thm:rounded average of compressible}.
\end{theorem}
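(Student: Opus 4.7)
The strategy is to reduce to \cref{thm:rounded average of compressible} by transferring the problem from $\phi$ on $\U$ to the formula $\psi(x,z) = R(x,z)$ on the auxiliary two-sorted structure $\hat{A}$ introduced before \cref{lem:k-hypes}. A $k$-hype consistent with $\pi$ is, by design, nothing other than an element of the $Q$-sort of $\hat{A}$, and $\vc(\psi)\leq d$ by \cref{lem:k-hypes}(1) (using $k>d$, which holds since $k\geq \kc(d)+2d+2$). So the $n,k$ produced by \cref{thm:rounded average of compressible} for $d$ and $\alpha$ will suffice unchanged.

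The steps I would take are the following. First, view the given $k$-hype $\Gamma\in S^{\pi}_{\phiopp,k}(A)$ as an element $e\in Q$ of $\hat{A}$; note that $\Gamma=\Gamma_{e}$ in the notation of \cref{lem:k-hypes}(2). Second, apply \cref{thm:rounded average of compressible} to the formula $\psi$ (with the empty partial type on the $Q$-side and parameter set $A\subseteq P$): this yields compressible $\psi^{\opp}$-types $q_{0},\ldots,q_{n-1}\in S_{\psi^{\opp}\down k}(A)$ whose $\alpha$-rounded average equals $\tp_{\psi^{\opp}}(e/A)$. Third, take an elementary extension $N\succ \hat{A}$ realising each $q_{i}$ by some $e_{i}\in N^{Q}$. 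By \cref{lem:k-hypes}(3), the associated hypes $\Gamma_{e_{i}}$ are actually complete $\phiopp$-types lying in $S^{\pi}_{\phiopp\down k}(A)$, i.e., complete, consistent with $\pi$, and $k$-compressible modulo $\pi$.

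Finally, I would check that $\Gamma$ is the $\alpha$-rounded average of the $\Gamma_{e_i}$. For each $a\in A$ and each $\delta<2$, one has $\phi(a,y)^{\delta}\in\Gamma$ iff $\psi(a,e)^{\delta}$ holds iff (by the rounded-average relation in $\hat{A}$) $|\{i<n:\psi(a,z)^{\delta}\in q_{i}\}|>\alpha n$ iff $|\{i<n:\psi(a,e_{i})^{\delta}\text{ holds in }N\}|>\alpha n$ iff $|\{i<n:\phi(a,y)^{\delta}\in\Gamma_{e_{i}}\}|>\alpha n$, which is precisely the definition of the $\alpha$-rounded average.

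There is no real obstacle beyond correctly setting things up: the work has been front-loaded into \cref{lem:k-hypes}, which ensures both that $\psi$ inherits VC-dimension from $\phi$ and that compressibility of the $\psi^{\opp}$-type of an element in the $Q$-sort descends to compressibility modulo $\pi$ of the associated $\phi^{\opp}$-hype. The only mildly delicate point is verifying that the $e_{i}$'s realising the $q_{i}$'s in an elementary extension satisfy the hypothesis of \cref{lem:k-hypes}(3), namely that $\tp_{\psi^{\opp}}(e_{i}/A)=q_{i}\in S_{\psi^{\opp}\down k}(A)$, which is automatic from the choice of $q_i$.
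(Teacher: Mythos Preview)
Your proposal is correct and follows essentially the same approach as the paper: both transfer the problem to the auxiliary structure $\hat{A}$, apply \cref{thm:rounded average of compressible} to $\psi=R$ (using \cref{lem:k-hypes}(1) and $k>d$), realise the resulting $k$-compressible $\psi^{\opp}$-types in an elementary extension, and then invoke \cref{lem:k-hypes}(3) to translate back to $k$-compressible $\phi^{\opp}$-types modulo $\pi$. Your justification of $k>d$ via $k\geq \kc(d)+2d+2$ is also valid; the paper instead appeals to \cref{rem:Bvc(n k) >= n k}.
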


\begin{proof}
  Let $k$ and $n$ be as in \cref{thm:rounded average of compressible}, and note that by \cref{rem:Bvc(n k) >= n k}, we have that $k > d$. 
  Let $\phi(x,y)$ be such that $\vc(\phi)\leq d <k$. Let $\Gamma$ be a $k$-hype in $\phiopp$ over $A$.

  Consider the structure $\hat{A}$ as above, and let $\psi(x,z) = R(x,z)$. By \cref{lem:k-hypes}(1), as $\vc(\phi) < k$, $\psi(x,z)$ is NIP and $\vc(\psi)\leq \vc(\phi)$. By \cref{thm:rounded average of compressible}, any $\psi^{\opp}$-type is the $\alpha$-rounded average of $n$ types in $S_{\psi^{\opp}\down k}(A)$. In particular, this is true for $\tp_{\psi^{\opp}}(\Gamma/A)$; let $r_0,\ldots, r_{n-1}$ witness this. Let $e_i \vDash r_i$ for $i<n$ (in an elementary extension $N\succ \hat{A}$), and let $\Gamma_i = \Gamma_{e_i}$ be the corresponding $k$-hypes as in \cref{lem:k-hypes}(2). By \cref{lem:k-hypes}(3), we get that $\Gamma_i$ is a type in $S^{\pi}_{\phiopp\down k}(A)$.

  Finally, since $\tp_{\psi^{\opp}}(\Gamma/A)$ is the $\alpha$-rounded average of $r_0,\ldots r_{n-1}$, it follows by definition that $\Gamma$ is the $\alpha$-rounded average of $\Gamma_0,\dots,\Gamma_{n-1}$.
\end{proof}

We deduce the existence of honest definitions for $k$-hypes.
 \begin{corollary}\label{cor:honest definitions for k-hypes}
  Let $\phi(x,y)$ be NIP and let $k$ be as in \cref{thm:k-hypes majority average} for $d=\vc(\phi)$ and $\alpha = 1/2$.
  Then there exists $\psi(x,z)$ such that
  if $A \subseteq  \U^x$ with $|A| > 1$
  and $\Gamma \in S_{\phiopp,k}(A)$ is a $k$-hype,
  then $\psi(x,z)$ is an honest definition of $\Gamma$ in the sense that if $A_0 \subseteq_{\fin} A$, then there is some $d \in A^z$ such that:
  \begin{enumerate}
    \item If $a \in A_0$ and $\phi(a,y) \in \Gamma$ then $\psi(a,d)$ holds.
    \item For all $a \in A$, if $\psi(a,d)$ holds, then $\phi(a,y) \in \Gamma$.
  \end{enumerate}

  Namely, \[\psi(x,(\zz,\zz',\zz'')) :=
  \Maj_{i<n} \forall y \left({\bigwedge_{j<k}
  (\phi(z_{i,j},y) \leftrightarrow  (z'_{i,j} = z''_{i,j})) \rightarrow  \phi(x,y)}\right),\]
  where $n$ and $k$ are as in \cref{thm:k-hypes majority average}.
\end{corollary}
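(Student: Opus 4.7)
The plan is to adapt the proof of \cref{cor:luhds} almost verbatim, substituting the stronger \cref{thm:k-hypes majority average} for \cref{thm:rounded average of compressible}. The only conceptual point is that the rounded-average representation holds in the same form for $k$-hypes as for complete types, so the combinatorial translation between ``is in $\Gamma$'' and ``is in a majority of the $p_i$'' is unchanged.

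Explicitly, given a $k$-hype $\Gamma \in S_{\phiopp,k}(A)$, \cref{thm:k-hypes majority average} (applied with $\alpha=1/2$, $d=\vc(\phi)$) produces compressible $\phi^{\opp}$-types $p_0,\ldots,p_{n-1} \in S_{\phiopp \down k}(A)$ whose $\tfrac12$-rounded average equals $\Gamma$. For a fixed finite $A_0 \subseteq A$ and each $i<n$, $k$-compressibility of $p_i$ furnishes $D_i=(d_{i,j})_{j<k} \subseteq A$ with $p_i|_{D_i} \vdash p_i|_{A_0}$. Using the two distinct elements $c_0,c_1 \in A$ (here we use $|A|>1$) we encode membership of $\phi(d_{i,j},y)$ in $p_i$ into the parameters $(\zz,\zz',\zz'')$ exactly as in the proof of \cref{cor:luhds}: set $d'_{i,j}=c_0$ and $d''_{i,j}=c_0$ or $c_1$ according to whether $p_i \vdash \phi(d_{i,j},y)$, so the inner formula $\bigwedge_{j<k}(\phi(z_{i,j},y) \leftrightarrow z'_{i,j}=z''_{i,j})$ with these parameters is equivalent to $p_i|_{D_i}(y)$.

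With this choice of $d=((d_{i,j}),(d'_{i,j}),(d''_{i,j}))$, for any $a \in A$ the formula $\psi(a,d)$ holds iff $|\{i<n : p_i|_{D_i}(y) \vdash \phi(a,y)\}| > n/2$. The two required properties then follow immediately from the definition of rounded average: if $a \in A_0$ and $\phi(a,y) \in \Gamma$, then $\phi(a,y) \in p_i$ for a majority of $i<n$, and for each such $i$ the choice of $D_i$ gives $p_i|_{D_i} \vdash \phi(a,y)$, so $\psi(a,d)$ holds; conversely if $\psi(a,d)$ holds then $\phi(a,y) \in p_i$ for a majority of $i$, hence $\phi(a,y) \in \Gamma$.

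There is essentially no obstacle; the only thing to verify is that the statement of \cref{thm:k-hypes majority average} indeed yields compressible types whose rounded average \emph{equals} $\Gamma$ (as a $k$-hype, not merely ``covers'' it), which it does by construction via \cref{lem:k-hypes}. Thus the entire argument of \cref{cor:luhds} transports with $\Gamma$ playing the role previously played by $\tp_\phiopp(b/A)$, and no additional ingredient beyond \cref{thm:k-hypes majority average} is required.
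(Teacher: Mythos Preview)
The proposal is correct and follows essentially the same approach as the paper, which simply says the proof is the same as that of \cref{cor:luhds} using \cref{thm:k-hypes majority average} in place of \cref{thm:rounded average of compressible}. Your write-up spells out those details accurately.
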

\begin{proof}
   The proof is the same as the one of \cref{cor:luhds}, using \cref{thm:k-hypes majority average}.
\end{proof}

We relate hypes to the Shelah expansion which we now recall.

\begin{definition}
  For a structure $M$, the Shelah expansion $M^{\Sh}$ of $M$ is given by: for any formula $\phi(x,y)$ and any $b \in \U^y$, add a new relation $R_{\phi(x,b)}(x)$ interpreted as $\phi(M,b)$.
\end{definition}

\begin{fact} \cite{MR2570659} \label{fac:Shelah expansion}
  If $T$ is NIP then for any $M\vDash T$, $M^{\Sh}$ is NIP.
\end{fact}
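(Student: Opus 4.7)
The plan is to follow Shelah's two-step strategy: first establish quantifier elimination for $\Th(M^{\Sh})$, then deduce NIP from QE by a VC-dimension bookkeeping argument.

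For Step 1, QE for $\Th(M^{\Sh})$ is equivalent to saying that the family of externally definable subsets of $M^n$ is closed under projections for each $n$ (closure under Boolean operations is automatic from the form of the expansion). This is Shelah's expansion theorem for NIP, and it follows from the existence of honest definitions for NIP formulas: given $D \subseteq M^{n+1}$ cut out by $\phi(x,y;b)$ with $b$ in an elementary extension $N \succ M$, the honest-definition theorem produces an $\L$-formula $\psi(x;z)$ and parameters $d$ in a further elementary extension such that $\psi(M;d)$ controls the projection of $D$, certifying that $\pi(D)$ is again externally definable in $M$. The honest-definitions input needed is \cite[Theorem~11]{CS-extDef2}; the uniform version is \cref{cor:luhds} of the present paper.

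Given Step 1, Step 2 goes as follows. Let $\psi(\bar x;\bar z)$ be any $\L^{\Sh}$-formula; by QE we may take $\psi$ quantifier-free. Then $\psi$ is a Boolean combination of atomic formulas, each either an $\L$-atomic one or of the form $R_{\phi_i(u,b_i)}(u_i)$ with $u_i$ a subtuple of $\bar x\bar z$, $\phi_i(u;y) \in \L$, and $b_i$ in some fixed $N \succ M$. Replacing each $R_{\phi_i(\cdot,b_i)}$ by $\phi_i(\cdot,b_i)$ produces an $\L$-formula $\Psi(\bar x;\bar z,\bar b)$, where $\bar b$ collects the external parameters, such that for every $\bar c \in M^{\bar z}$ one has
\[ \psi(M^{\Sh},\bar c) \;=\; \{\,\bar a \in M^{|\bar x|} : N \vDash \Psi(\bar a,\bar c,\bar b)\,\}. \]
Hence the set system $\{\psi(M^{\Sh},\bar c) : \bar c \in M^{\bar z}\}$ is a restriction to $M^{|\bar x|}$ of a subsystem of $\{\Psi(N,\bar c',\bar b') : \bar c' \in N^{\bar z},\bar b' \in N^{|\bar b|}\}$. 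Restriction cannot increase VC-dimension, and $\Psi$, in the partition $(\bar x;\bar z\bar b)$, is NIP in the monster because it is a Boolean combination of NIP $\L$-formulas — this is \cite[Lemma~2.9]{simon-NIP}, with an explicit bound in \cref{lem:vcBool}. Therefore $\vc(\psi) < \infty$ in $M^{\Sh}$.

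The main obstacle is Step 1, i.e., closure of the externally definable sets under projections. This is the genuinely nontrivial mathematical input and is exactly where NIP enters, via honest definitions. Once Step 1 is in place, Step 2 is essentially formal: it combines the NIP-preservation of Boolean combinations (\cref{lem:vcBool}) with the trivial observation that restricting a set system to a subset of its ambient set cannot increase VC-dimension.
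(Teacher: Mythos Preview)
The paper does not prove this statement; it is recorded as a \textbf{Fact} with a citation to Shelah \cite{MR2570659} and used as a black box. So there is no in-paper argument to compare your proposal against.

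That said, your outline is the standard modern proof (essentially the exposition in \cite[Chapter~3]{simon-NIP}) and is correct. A couple of minor clarifications would tighten Step~1: the honest definition $\psi$ should have the full variable tuple $(x,y;z)$, and the key point is that one can find, in a saturated elementary extension of the pair $(M,b)$, a parameter $d$ with $\psi(M^{|x|+|y|},d) = \phi(M^{|x|+|y|},b)$ and $\psi(\cdot,d) \vdash \phi(\cdot,b')$ globally; then $(\exists y\,\psi)(M^{|x|},d)$ computes the projection, using elementarity of the pair to pull witnesses back to $M$. Your Step~2 is fine; note that bounding $\vc(\psi)$ in the single model $M^{\Sh}$ suffices, since having VC-dimension $\leq n$ is a first-order property of the complete theory.
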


\begin{corollary}
  Suppose $T$ is NIP, and let $M \vDash T$.
  For each formula $\phi(x,y)$, let $k_\phi$ be as in \cref{thm:k-hypes majority average} for $d=\vc(\phi)$ and $\alpha = 1/2$.

  Consider the expansion $M^{h\Sh}$ of $M$ given by naming for each partitioned $\L$-formula $\phi$ and each $k_\phi$-hype $\Gamma \in S_{\phiopp,k_{\phi}}(M^x)$ the set $R_\Gamma := \set{a\in M^x}{\phi(a,y)\in \Gamma}$. Then $M^{h\Sh}$ is interdefinable with $M^{\Sh}$ and in particular is NIP.
\end{corollary}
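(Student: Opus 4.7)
The plan is to verify that each of $M^{\Sh}$ and $M^{h\Sh}$ defines the other, after which the ``in particular is NIP'' clause is immediate from \cref{fac:Shelah expansion}.

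For the easy direction, namely that every new predicate of $M^{\Sh}$ is definable in $M^{h\Sh}$: given a partitioned $\L$-formula $\phi(x,y)$ and $b \in \U^y$, I would observe that the complete $\phiopp$-type $\Gamma_b := \tp_{\phiopp}(b/M^x)$ is trivially $k_\phi$-consistent, hence is a $k_\phi$-hype in $S_{\phiopp,k_\phi}(M^x)$. Unwinding the definitions, $R_{\Gamma_b} = \{a \in M^x : \U \vDash \phi(a,b)\}$, which is precisely the Shelah predicate $R_{\phi(x,b)}$. So every generator of $M^{\Sh}$ is already named in $M^{h\Sh}$.

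For the other direction, that every $R_\Gamma$ of $M^{h\Sh}$ is definable in $M^{\Sh}$, I would use the uniform honest definition from \cref{cor:honest definitions for k-hypes}. Fix a $k_\phi$-hype $\Gamma \in S_{\phiopp,k_\phi}(M^x)$ and let $\psi(x,z)$ be its honest definition. I look for a single $d \in \U^z$ with $\psi(M^x, d) = R_\Gamma$; such a $d$ witnesses that $R_\Gamma$ is one of the Shelah predicates of $M^{\Sh}$, namely the one associated to the $\L$-formula $\psi$ and the parameter $d \in \U$. To find $d$, I consider the partial type in $z$ over $M$
\[p(z) := \{\psi(a,z) : a \in M^x,\ \phi(a,y) \in \Gamma\} \cup \{\neg \psi(a,z) : a \in M^x,\ \phi(a,y) \notin \Gamma\}.\]
Its finite satisfiability in $M$ is exactly what the honest definition buys us: for any finite $A_0 \subseteq M^x$, clauses (1) and (2) of \cref{cor:honest definitions for k-hypes} together yield $d \in (M^x)^z$ with $\psi(a,d) \Leftrightarrow \phi(a,y) \in \Gamma$ for every $a \in A_0$. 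By saturation of $\U$, $p$ has a realisation $d \in \U^z$, and this $d$ is as required.

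The only non-routine step is the compactness argument in the second direction, and even that is short once the honest definition is in hand; the first direction is a pure unpacking of definitions. Given interdefinability, NIP of $M^{h\Sh}$ follows from NIP of $M^{\Sh}$ via \cref{fac:Shelah expansion}.
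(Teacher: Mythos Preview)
Your proof is correct, but it takes a slightly different route from the paper's. For the nontrivial direction, the paper appeals directly to \cref{thm:k-hypes majority average}: it writes the $k_\phi$-hype $\Gamma$ as the rounded average of $n$ genuine $\phiopp$-types $r_0,\ldots,r_{n-1}$, picks realisations $a_i \vDash r_i$ in $\U$, and observes that $R_\Gamma(M) = \Maj_{i<n} R_{\phi(x,a_i)}(M)$, a Boolean combination of Shelah predicates. You instead pass through \cref{cor:honest definitions for k-hypes} and then use compactness to upgrade the honest definition to an exact definition over some $d \in \U^z$; this shows the marginally stronger fact that each $R_\Gamma$ is literally a single Shelah predicate $R_{\psi(x,d)}$, at the cost of the extra compactness step. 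Both arguments are short and essentially equivalent in strength, since \cref{cor:honest definitions for k-hypes} is itself derived from \cref{thm:k-hypes majority average}.
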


\begin{proof}
  Since every $\phiopp$-type is in particular a $k_\phi$-hype, every $R_{\phi(x,b)}(M)$ is definable in $M^{h\Sh}$.

  For the other direction, fix some formula $\phi(x,y)$ and some $k_\phi$-hype $\Gamma \in S_{\phiopp,k}(M)$. By \cref{thm:k-hypes majority average} (applied with $d=\vc(\phi)$ and $\alpha = 1/2$), there are $r_0,\dots,r_{n-1} \in S_{\phiopp}(M)$ such that $\Gamma$ is the rounded average of $r_0,\dots,r_{n-1}$. Let $a_0 \vDash r_0,\dots, a_{n-1}\vDash r_{n-1}$. Then $R_{\Gamma}(M) = \Maj_{i<n}(R_{\phi(x,a_i)}(M))$, and thus is definable in $M^{\Sh}$.
\end{proof}

\subsection{UDTFS for pseudofinite types}

Here we extend UDTFS to \emph{pseudofinite} types: every pseudofinite $\phi$-type (\cref{def:pseudofinite type}) is definable, and uniformly so.

\begin{definition}\label{def:pseudofinite type}
  Let $\L' = \L \cup \{P,Q\}$ where $P,Q$ are predicates for subsets of $\U^x$.

  Suppose $\phi(x,y)$ is an $\L$-formula, $M\vDash T$, $D\subseteq M^x$, and $p \in S_{\phiopp}(D)$. For $\epsilon<2$, let $D^{\epsilon} = \set{a\in D}{\phi(a,y)^\epsilon \in p}$. Then $p$ is \defn{pseudofinite} if for every $\L'$-sentence $\varphi$, if $(M,D^0,D^1) \vDash \varphi$ then there is an $\L'$-structure $N$ such that $N \vDash \varphi$ and $P^N,Q^N$ are finite.
\end{definition}

\begin{remark}
  In the notation of \cref{def:pseudofinite type}, a type $p\in S_{\phiopp}(D)$ is pseudofinite iff there is a model $(N,E^0,E^1)\equiv (M,D^0,D^1)$ which is an ultraproduct $\prod_{i\in I} N_i/U$ such that $(E^\epsilon)^{N_i}$ is finite for each $i \in I$ and $\epsilon<2$ (essentially the same proof as in \cite[Lemma~1]{MR2007169} works).
\end{remark}

\begin{theorem}\label{thm:pseudofinite types are definable}
  Suppose $\phi(x,y)$ is NIP. Then there is a formula $\psi(x,z)$ such that whenever $M\vDash T$, $D\subseteq M^x$ is of size $>1$ and $p \in S_{\phiopp}(D)$ is pseudofinite, $p$ is definable by an instance of $\psi$ over $D^z$.

  Moreover, if $T$ has Skolem functions, then we can choose $\psi(x,z)$ to be NIP.
\end{theorem}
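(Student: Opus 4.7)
The plan is to use the uniform honest definition $\psi(x,z)$ from \cref{cor:luhds}; on finite parameter sets of size $\geq 2$, this becomes an exact definition by UDTFS (\cref{cor:UDTFS}), encoded by $\L$-sentences $\sigma_n \in T$ (for each $n \geq 2$). The goal is to show that the $\L'$-sentence
\[ \beta := \exists z \left(\bigwedge_j (P(z_j) \vee Q(z_j)) \wedge \forall x \in P \cup Q (\psi(x,z) \leftrightarrow P(x))\right) \]
holds in the $\L'$-structure $(M, D^0, D^1)$, which says precisely that $p$ is defined by some instance $\psi(x,c)$ over $D^z$.

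My approach would use the ultraproduct characterization of pseudofiniteness from the remark after \cref{def:pseudofinite type}: write $(M, D^0, D^1) \equiv_{\L'} (N, E^0, E^1) = \prod_i (N_i, P_i, Q_i)/U$ with each $P_i, Q_i$ finite. By $\L'$-elementary equivalence, $N \vDash T$. The idea is to verify $\beta$ in $(N, E^0, E^1)$ via Łoś's theorem, reducing it to $\beta$ in $U$-almost all factors $(N_i, P_i, Q_i)$. In each such factor where $|P_i \cup Q_i| \geq 2$, where $N_i \vDash \sigma_{|P_i \cup Q_i|}$, and where the finite consistent pattern $\{\phi(a,y)^{P_i(a)} : a \in P_i \cup Q_i\}$ is realized in $N_i$ by some $b_i$, UDTFS applied to $A = P_i \cup Q_i$ and this $b_i$ yields a witness $c_i \in (P_i \cup Q_i)^z$ with $\psi(A, c_i) = P_i$, i.e., $\beta$ holds in $N_i$.

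The main obstacle is arranging that \emph{both} conditions (namely UDTFS of the correct size $|P_i \cup Q_i|$, and realizability of the $\phi$-pattern of $P_i$) hold in $U$-almost all factors. Although each individual $\sigma_n \in T$ holds in $U$-almost all factors, the diagonal condition $N_i \vDash \sigma_{|P_i \cup Q_i|}$ is subtler because $|P_i \cup Q_i|$ varies with $i$. I would handle this by a careful construction of the ultraproduct: index over $(\Phi, k) \in \pfin(\Th_{\L'}(M, D^0, D^1)) \times \N$, choose each $N_{(\Phi, k)}$ to be a finite-$P, Q$ model of $\Phi \cup \{\sigma_n : n \leq k\}$ (existing by pseudofiniteness, since this lies in $\Th_{\L'}(M, D^0, D^1)$), keeping $|P \cup Q| \leq k$ whenever compatible with $\Phi$, and take a diagonal ultrafilter making both $\Phi$ and $k$ grow. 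Transferring $\beta$ back to $(M, D^0, D^1)$ by elementary equivalence yields the desired $c \in D^z$.

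For the moreover clause, when $T$ has Skolem functions, the $\forall y$ quantifier in the formula for $\psi$ given in \cref{cor:luhds} can be replaced by evaluation at a Skolem function term, so that $\psi$ becomes a Boolean combination of instances of $\phi$ and equalities; by \cref{lem:vcBool}, $\psi$ is then NIP.
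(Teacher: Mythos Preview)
Your reduction to Łoś in the factors has a genuine gap at the step ``where the finite consistent pattern $\{\phi(a,y)^{P_i(a)} : a \in P_i \cup Q_i\}$ is realized in $N_i$ by some $b_i$''. This realizability cannot be arranged on a $U$-large set of factors. The single $\L'$-sentence
\[
\exists y\ \forall a\ \bigl( (P(a)\vee Q(a)) \rightarrow (\phi(a,y)\leftrightarrow P(a)) \bigr)
\]
expresses exactly that the full pattern is realized, and it holds in $(M,D^0,D^1)$ if and only if $p$ is realized in $M$, which is not assumed (and is precisely what the theorem is meant to cover; see the remark following the theorem in the paper). What \emph{does} transfer is, for each fixed $m$, the $\L'$-sentence asserting $m$-consistency of the pattern; but to conclude realizability in a factor you would need $m$-consistency for $m=|P_i\cup Q_i|$, and this size is unbounded along the ultrafilter. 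Your attempt to cap $|P\cup Q|\le k$ ``whenever compatible with $\Phi$'' fails for infinite $D$: since $|P\cup Q|>n$ lies in $\Th_{\L'}(M,D^0,D^1)$ for every $n$, any cofinal family of $\Phi$'s will force $|P\cup Q|$ above any prescribed bound, so the caveat kicks in on a $U$-large set and you lose control of the size. There is also no mechanism in the construction (as written) that ties the $m$-consistency sentences included in $\Phi$ to the actual size $|P\cup Q|$ of the chosen factor.

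The paper closes this gap by replacing the appeal to \cref{cor:luhds} with the uniform honest definition for $k$-hypes, \cref{cor:honest definitions for k-hypes}: one fixes $k$ once and for all (depending only on $\vc(\phi)$), notes that ``the pattern determined by $P,Q$ is $k$-consistent'' is a \emph{single} $\L'$-sentence true in $(M,D^0,D^1)$, and transfers the conjunction of $\neg\beta$, $|P\cup Q|>1$, $\vc(\phi)\le d$, and $k$-consistency to a finite-$P,Q$ structure $N$ directly via the definition of pseudofiniteness. In $N$ the pattern is then a $k$-hype over the finite set $E=P^N\cup Q^N$, and since the honest-definition machinery depends only on the bound $\vc(\phi)\le d$, the formula $\psi$ exactly defines $E^1$ over $E$, contradicting $\neg\beta$. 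Your treatment of the ``moreover'' clause is essentially on target and matches the paper's idea, but note that one does not literally Skolemize the $\forall y$; rather one takes a Skolem witness $f$ for $\exists y\,\bigwedge_j(\phi(z_{i,j},y)\leftrightarrow z'_{i,j}=z''_{i,j})$ and sets $\psi(x,z):=\Maj_{i<n}\phi(x,f(z_i,z'_i,z''_i))$, which is NIP as a Boolean combination of instances of $\phi$ composed with a definable function.
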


\begin{proof}
  For the first part, let $k$ and $\psi(x,z)$ be as in \cref{cor:honest definitions for k-hypes}. Suppose that $p(y)$ is not definable by an instance of $\psi$ over $D^z$. Working in the expansion $(M,D^0,D^1)$ as in \cref{def:pseudofinite type}, we get that in some $\L'$-structure $(N,E^0,E^1)$, letting $E = E^0 \cup E^1$, the following hold:
  \begin{itemize}
    \item $E$ is finite of size $>1$.
    \item The formula $\phi(x,y)$ is NIP in $N$ and its VC-dimension equals $\vc(\phi)$.
    \item The set of formulas $\Gamma = \set{\phi(a,y)}{a\in E^1} \cup \set{\neg \phi(a,y)}{a \in E^0}$ is a $k$-hype.
    \item $E^1$ is not definable in $N$ by any instance of $\psi$ over $E^z$.
  \end{itemize}
  However, by the choice of $\psi$ and as $E$ is finite, there is some $d \in E^z$ such that $\psi(E,d) = E^1$, contradiction.

  For the second part, assuming that $T$ has Skolem functions, we let
  \[\psi(x,(\zz,\zz',\zz'')) :=
  \Maj_{i<n} \left(\phi(x,f(z_i,\zz'_i,\zz''_i))\right),\]
  where $n$ is as in \cref{thm:k-hypes majority average} and $f$ is a $\emptyset$-definable function such that $T$ thinks that if $\exists y \bigwedge_{j<k}(\phi(z_{i,j},y) \leftrightarrow  (z'_{i,j} = z''_{i,j}))$ then $f(\zz_i,\zz'_i,\zz''_i) \vDash \bigwedge_{j<k}(\phi(z_{i,j},y) \leftrightarrow  (z'_{i,j} = z''_{i,j}))$ (whose existence we assumed).

  Note that $\psi(x,z)$ is NIP, since $\phi(x,f(z))$ is NIP; see also \cite[Proof of Proposition 26]{UDTFS}. To see that it works, assume not. Then using the same argument as above, we get an $\L'$-structure $N$ with the same properties as above. Now, review the proof of \cref{cor:luhds}. When the domain $D$ of the $k$-compressible types $p_i$ (for $i<n$) is finite, then $p_i$ is in fact isolated by a conjunction of $k$ instances of $\phiopp$ or its negation, thus, putting the isolating parameters for $z_i$ and coding the negations using $z_i',z_i''$ and two elements from $D^N$, we are done.
\end{proof}

\begin{remark}
  Note that if $p$ is realised in $M$, then \cref{thm:pseudofinite types are definable} follows directly from UDTFS: in that case, in the proof one can replace the demand about $\Gamma$ being a hype by it being a type. 
\end{remark}

\begin{remark}
  Clearly \cref{thm:pseudofinite types are definable} implies UDTFS (\cref{cor:UDTFS}), and hence its conclusion implies that $\phi(x,y)$ is NIP (see e.g.\ the proof of Theorem~14 in \cite{UDTFS}).
\end{remark}

\section{Compressibility as an isolation notion}\label{sec:compressiblity as an isolation}
In this section we study properties of compressibility seen as an isolation notion (mostly) under NIP, and in particular as a way to construct models analogous to constructible models in totally transcendental theories. Towards that we prove a transitivity result for compressibility in \cref{prop:transitivity}, which uses the type decomposition theorem from \cite{simon-decomposition}.

As an application, we will show that if $T$ is unstable and $M\vDash T$ is $\omega$-saturated, then there are arbitrarily large elementary extensions $N$ of $M$ such that every generically stable type over $M$ (see \cref{def:GS types over models}) realised in $N$ is realised in $M$ (this is \cref{{cor:extend Preserve Stable},{rem:extend preserve stable omega-sat}}).

\subsection{Monotonicity}
\begin{lemma} \label{lem:monotonicity}
  Suppose $b,c$ are finite tuples. If $\tp(cb/A)$ is compressible then so are $\tp(c/Ab)$ and $\tp(b/A)$.
\end{lemma}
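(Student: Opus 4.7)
My plan is to reduce both claims to compressibility of $\tp(cb/A)$ by applying it to suitable modifications of the target formula. The $\tp(b/A)$ direction will be essentially immediate: given an $\L$-formula $\phi(y, w)$, I would view it as $\Phi((x, y), w) := \phi(y, w)$ with a vacuous $x$-variable, apply compressibility of $\tp(cb/A)$ to $\Phi$ to obtain a compression formula $\Psi((x, y), z)$, and then set $\psi(y, z) := \exists x\, \Psi(x, y, z)$. The existential is harmless because the entailed formulas $\phi(y, a)^\eps$ do not depend on $x$, so if for every finite $A_0 \subseteq A$ there is $d \in A^z$ with $\Psi(c, b, d)$ holding and $\Psi((x, y), d) \vdash (\tp(cb/A) \restriction \Phi)|_{A_0}$, then the same $d$ witnesses $\psi(b, d)$ and $\psi(y, d) \vdash (\tp(b/A) \restriction \phi)|_{A_0}$.

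For $\tp(c/Ab)$ the obstacle is that parameter tuples for an $\L$-formula $\phi(x, w)$ now range over $(Ab)^w$ rather than $A^w$. To handle this I would enumerate the finitely many ``substitution patterns'': writing $b = (b_1, \ldots, b_m)$ and $w = (w_1, \ldots, w_n)$, for each function $\tau \colon \{1, \ldots, n\} \to \{0, 1, \ldots, m\}$ I would define an $\L$-formula $\phi^\tau(x, y, w^\tau)$ obtained from $\phi(x, w)$ by substituting $y_{\tau(i)}$ into position $i$ of $w$ when $\tau(i) > 0$ and leaving the remaining positions as parameter variables (collected into $w^\tau$). Every $\alpha \in (Ab)^w$ then factors uniquely as $\phi(x, \alpha) = \phi^\tau(x, b, a)$ for the appropriate $\tau$ and some $a \in A^{w^\tau}$.

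Next I would apply compressibility of $\tp(cb/A)$ to each $\phi^\tau$, producing an $\L$-formula $\Psi^\tau((x, y), z^\tau)$, and take the conjunction $\Psi((x, y), \bar z) := \bigwedge_\tau \Psi^\tau((x, y), z^\tau)$. Reinterpreting $y$ as part of the parameter tuple gives $\psi(x, (y, \bar z)) := \Psi(x, y, \bar z)$. Given a finite $B_0 \subseteq Ab$, I would choose a finite $A_0 \subseteq A$ with $B_0 \subseteq A_0 \cup b$; pattern-wise compression on $A_0$ produces $d^\tau \in A^{z^\tau}$ for each $\tau$, and $(b, \bar d) \in (Ab)^{(y, \bar z)}$ is then the desired compression parameter, since each $\alpha \in B_0^w$ is handled by the compression for its pattern.

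There is no deep obstacle here: the argument is essentially a bookkeeping exercise. The main subtlety is the need to switch the role of $y$ between ``variable'' (when invoking compressibility of $\tp(cb/A)$) and ``parameter slot'' (when interpreting the result as a compression for $\tp(c/Ab)$), which relies on the flexibility of the variable-partition inherent in the definition of compressibility. Finiteness of $b$ and $w$ is used crucially so that the pattern enumeration is finite and $\Psi$ remains a single first-order formula.
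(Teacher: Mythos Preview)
Your proposal is correct and takes essentially the same approach as the paper's proof: for $\tp(b/A)$ you existentially quantify out $x$ from a compression formula for $\tp(cb/A)$, and for $\tp(c/Ab)$ you enumerate the finitely many substitution patterns that put coordinates of $b$ into the parameter slots of $\phi$, compress each resulting formula against $\tp(cb/A)$, and conjoin. The paper does exactly this, phrasing the pattern enumeration as ``the set of all formulas obtained from $\phi$ by substituting variables''; only the notation and the order of the two parts differ. (One inessential imprecision: your factorisation of $\alpha \in (Ab)^w$ via a pattern $\tau$ need not be unique if some $b_i$ lies in $A$ or if $b$ has repeated entries, but only existence is needed.)
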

\begin{proof}
  We start by showing that $\tp(c/Ab)$ is compressible. Suppose $z$ is a tuple of variables such that $b\in \U^z$. Given a formula $\phi(x,y)$, let $\Phi$ be the set of all formulas of the form $\psi(x,z,y)$ we get from substituting variables from $y$ by variables from $yz$ in $\phi$.
  Fix some formula $\psi(x,z,y)\in \Phi$. By assumption, there is some formula $\zeta_\psi(xz,w)$ that compresses $\tp_\psi(cb/A)$ (with the partition $\psi(xz,y)$). This means that for any  $A_0 \subseteq_{\fin}  A$ there is some $a_{\psi,A_0} \in A^w$ such that $\tp(cb/A) \vdash \zeta_\psi(xz,a_{\psi,A_0}) \vdash  \tp_\psi(cb/A_0)$.

  Then we have that $\tp(c/Ab) \vdash \bigwedge_{\psi\in \Phi}\zeta_\psi(x,ba_{\psi,A_0}) \vdash  \tp_\phi(c/A_0b)$, and this shows that $\tp(c/Ab)$ is compressible.

  Now we show that $\tp(b/A)$ is compressible. Fix some formula $\phi(z,y)$. Suppose $\zeta(xz,w)$ compresses $\tp_{\phi}(cb/A)$. Fix some $A_0 \subseteq_{\fin} A$ and suppose $a \in A^w$ is such that $\tp(cb/A) \vdash \zeta(xy,a) \vdash  \tp_\phi(cb/A_0)$.
  Note that if $a_0\in A_0^y$ is such that $\vDash  \forall x,z (\zeta(x,z,a) \rightarrow  \phi(z,a_0)^{\epsilon})$ for $\epsilon<2$,
  then $\vDash  \forall z (\exists x \zeta(x,z,a) \rightarrow  \phi(z,a_0)^{\epsilon})$,
  so $\tp(b/A) \vdash \exists x \zeta(x,z,a) \vdash  \tp_\phi(b/A_0)$.
\end{proof}

\begin{remark} \label{rem:converse to monotonicity}
  We cannot hope for \cref{lem:monotonicity} to hold when $b$ is infinite: every type over $\emptyset$ is compressible trivially, so if $\tp(a/B)$ is not compressible and $b$ enumerates $B$, then $\tp(ab)$ is compressible while $\tp(a/b)$ is not.

  However, the converse to \cref{lem:monotonicity} holds for infinite tuples as well (see \cref{rem:finite transitivity} below). For finite tuples this can be seen by a direct argument of this kind, but for infinite tuples we will need stronger tools which we will develop in the next section under NIP. 
\end{remark}


\begin{definition} \label{def:set compressible over}
  Suppose $B,A$ are sets. We say \defn{$B$ is compressible over $A$} if $\tp(\bar{b}/A)$ is compressible where $\bar{b}$ is some (any) tuple enumerating $B$. 
\end{definition}

\begin{remark}
  The set $B$ is compressible over $B$ (even isolated).
\end{remark}

\begin{remark} \label{rem:B compressible iff all finite subsets}
  By \cref{rem:compressible iff finite restrictions} $B$ is compressible over $A$ iff for every finite tuple $b$ from $B$, $\tp(b/A)$ is compressible over $A$.
\end{remark}

\begin{lemma} \label{lem:set + one more comp over set}
  Given a set $B$ and a tuple $a$,
  the set $Ba$ is compressible over $B$ if and only if $\tp(a/B)$ is compressible.
\end{lemma}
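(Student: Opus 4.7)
The plan is to unpack \cref{def:set compressible over} as saying that $\tp(\bar{b}a/B)$ is compressible, where $\bar{b}$ enumerates $B$, and to bridge between this (possibly infinite-variable) type and its finite-variable restrictions via \cref{rem:compressible iff finite restrictions}. The key auxiliary observation is that any finite subtuple $\bar{b}'$ of $\bar{b}$ takes values $\bar{c}' \in B$, so $\tp(\bar{b}'/B)$ is literally isolated by $\bar{z}' = \bar{c}'$ and contributes only trivially to compression.

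Direction ($\Rightarrow$) will be essentially immediate: $\tp(a/B)$ is, by definition, the restriction of $\tp(\bar{b}a/B)$ to the $a$-variables, so \cref{rem:compressible iff finite restrictions} applied to both types forces $\tp(a/B)$ to be compressible once $\tp(\bar{b}a/B)$ is (every finite-variable restriction of $\tp(a/B)$ is also a finite-variable restriction of $\tp(\bar{b}a/B)$, hence compressible by hypothesis).

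Direction ($\Leftarrow$) requires a bit more, but is still bookkeeping. Assuming $\tp(a/B)$ is compressible, by \cref{rem:compressible iff finite restrictions} we reduce to showing that $\tp(\bar{b}'a'/B)$ is compressible for every finite subtuple $\bar{b}'$ of $\bar{b}$ (with value $\bar{c}' \in B$) and every finite subtuple $a' \subseteq a$. Given a formula $\phi(\bar{z}', x, y)$, we set $\phi'(x,y) := \phi(\bar{c}', x, y)$, take a compressor $\zeta(x, w)$ for $\tp_{\phi'}(a'/B)$ provided by compressibility of $\tp(a'/B)$, and form
\[\chi(\bar{z}', x, (\bar{u}', w)) := (\bar{z}' = \bar{u}') \land \zeta(x, w).\]
Given a finite $A_0 \subseteq B$ and a $\zeta$-compressing parameter $d \in B^w$ for $\tp_{\phi'}(a'/B)|_{A_0}$, the parameter $(\bar{c}', d) \in B$ compresses $\tp_\phi(\bar{b}'a'/B)|_{A_0}$ via $\chi$: the formula $\chi(\bar{b}', a', (\bar{c}', d))$ holds, and if $\chi(\bar{z}'_0, x_0, (\bar{c}', d))$ holds then $\bar{z}'_0 = \bar{c}'$, so $\zeta(x_0, d)$ gives $\phi'(x_0, a_0)^\epsilon = \phi(\bar{z}'_0, x_0, a_0)^\epsilon$ for each $\phi'(x, a_0)^\epsilon$ in $\tp_{\phi'}(a'/B)|_{A_0}$.

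No genuine obstacles are expected; the lemma is essentially a bookkeeping identification of two formulations of compressibility, and can be viewed as the $\bar{b}$-infinite analogue of the finite case of \cref{lem:monotonicity} combined with the fact that $\tp(\bar{b}'/B)$ is isolated.
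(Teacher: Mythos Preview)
Your proof is correct and follows essentially the same route as the paper's: for $(\Leftarrow)$ both reduce via \cref{rem:compressible iff finite restrictions} to finite sub-tuples and use a compressor of the form $(\bar{z}'=\bar{u}')\wedge\zeta(x,w)$ (the paper writes this as $\psi(x,s)\wedge y=t$, applying compressibility of $\tp(a/B)$ to $\phi$ with the partition $(x,yz)$ rather than plugging in $\bar{c}'$, which avoids the small step of extracting an $\L$-formula compressor for the $\L(B)$-formula $\phi'$). For $(\Rightarrow)$ the paper cites \cref{lem:monotonicity} while you go directly through \cref{rem:compressible iff finite restrictions}, which is equally valid.
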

\begin{proof}
  Left to right follows from \cref{lem:monotonicity}, so suppose that $\tp(a/B)$ is compressible. Let $b$ be a tuple enumerating $B$. We must show that $\tp(ab/B)$ is compressible. By \cref{rem:compressible iff finite restrictions}, it is enough to prove that $\tp(ab'/B)$ is compressible where $b'$ is a finite sub-tuple of $b$. Let $y$ be a tuple of variables in the sort of $b'$. Let $\phi(x,y,z)$ be a formula and let $\psi(x,s)$ compress $\tp_{\phi(x,yz)}(a/B)$. Then $\psi(x,s) \land y=t$ compresses $\tp_{\phi(xy,z)}(ab'/B)$.
\end{proof}

%
%
%

\subsection{Type decomposition and rescoping compressibility}\label{sec:decompComp}

Here we use the results from \cite{simon-decomposition} to prove that compressibility can be \emph{rescoped} to an arbitrary subset of the domain (see \cref{{prop:compressible iff orth to MS},{prop:comp over smaller set with constants}}).

For the remainder of \cref{sec:compressiblity as an isolation} \textbf{we assume that $T$ is NIP} unless otherwise specified.

We first recall the definition of a generically stable partial type. As opposed to previous sections, here a partial type does \underline{not} have to be small, i.e., it is over $\U$. We call such partial types \defn{global partial types}. As for global types, a global partial type $\pi$ is $A$-invariant if it is invariant under automorphisms of $\U$ fixing $A$.

\begin{remark} \label{rem:extension of partial global type}
  Suppose $\pi(x)$ is a global partial type. Then for any small set $A$, if $a \vDash \pi|_A$, then $\pi(x) \cup \tp(a/A)$ is consistent, and hence for any $B$ there is some $a' \equiv_A a$ such that $a' \vDash \pi|_B$.
\end{remark}

\begin{definition}
  We say that a global partial type $\pi$ is \defn{ind-definable} over $A$ if for every $\phi(x,y)$, the set $\set{b\in \U^y}{\phi(x,b) \in \pi}$ is ind-definable over $A$, i.e., it is a union of $A$-definable sets.
\end{definition}

\begin{remark} \cite[Discussion after Definition~2.1]{simon-decomposition}
  Note that $\pi(x)$ is ind-definable iff $\set{\phi(x,c)}{\pi\vdash \phi(x,c)}$ is ind-definable.
\end{remark}

\begin{fact}\cite[Lemma~2.2]{simon-decomposition} \label{fac:ind-def iff realising over is tp-def}
  Let $\pi(x)$ be an $A$-invariant global partial type. Then $\pi$ is ind-definable over $A$ if and only if the set $X=\set{(a,\bar b)}{\bar b\in \U^{\omega}, a\vDash \pi|_{A\bar b}}$ is type-definable over $A$.
\end{fact}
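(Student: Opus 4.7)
The plan is to translate both conditions into statements about consistency of partial types and then exploit the finite arity of $\L$-formulas together with compactness. Treating $\pi$ as closed under consequence, the ind-definability hypothesis says that for every $\L$-formula $\phi(x,y)$ the set $R_\phi := \{b : \pi \vdash \phi(x,b)\}$ is a union of $A$-definable sets, equivalently, its complement $R_\phi^c = \{b : \pi \cup \{\neg\phi(x,b)\} \text{ is consistent}\}$ is type-definable over $A$. Similarly, $X$ is type-definable over $A$ iff $X^c$ is a union of $A$-definable subsets of $\U^x \times \U^\omega$.

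For $(\Leftarrow)$, suppose $X = \bigcap_{i \in I} \{(a,\bar b) : \theta_i(a,\bar b)\}$ with each $\theta_i$ an $\L(A)$-formula involving only finitely many coordinates of $\bar b$. The key equivalence to establish is
\[ \pi \not\vdash \phi(x,b) \iff \exists \bar b\, \exists a\, \bigl((a,\bar b) \in X \wedge \neg\phi(a,b) \wedge b \text{ is the first coordinate of } \bar b\bigr). \]
The nontrivial direction is $(\Rightarrow)$: by \cref{rem:extension of partial global type}, any realization of $\pi$ (in a sufficiently saturated extension) automatically realizes $\pi|_{A\bar b}$ for every $\bar b$. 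The right-hand side is type-definable in $b$ by compactness: for each finite $F \subseteq I$, the sentence $\exists \bar b', \exists a\, (\bigwedge_{i \in F} \theta_i(a,b,\bar b') \wedge \neg\phi(a,b))$ is $\L(A)$ in $b$ (using only the finitely many coordinates $\bar b'$ needed by $F$), and the whole condition is the conjunction over $F$.

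For $(\Rightarrow)$, assume $\pi$ is ind-definable over $A$. Unpacking, $(a,\bar b) \in X$ iff for every $\L(A)$-formula $\psi(x,\bar y^*)$ in a finite tuple $\bar y^*$ of sorts matching coordinates of $\bar b$, and every sub-tuple $\bar b^* = (b_{i_1},\ldots,b_{i_n})$ of $\bar b$ of matching sort, $\pi \vdash \psi(x,\bar b^*)$ implies $\psi(a,\bar b^*)$. By hypothesis $\{\bar c : \pi \vdash \psi(x,\bar c)\} = \bigcup_j \chi_{\psi,j}(\U^{\bar y^*})$ with each $\chi_{\psi,j}$ an $\L(A)$-formula, so $X^c$ is the union, over $\psi$, over position tuples $(i_1,\ldots,i_n)$, and over $j$, of the $A$-definable sets $\{(a,\bar b) : \chi_{\psi,j}(b_{i_1},\ldots,b_{i_n}) \wedge \neg\psi(a,b_{i_1},\ldots,b_{i_n})\}$, giving type-definability of $X$ over $A$.

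The main (quite minor) obstacle is the bookkeeping around $\bar b$ being an infinite tuple: one must verify that every formula describing $X$ or $X^c$ uses only finitely many coordinates of $\bar b$ at a time, so that compactness correctly packages these finitary pieces into the desired global type-definable resp.\ ind-definable description.
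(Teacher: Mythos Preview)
The paper does not prove this statement; it is stated as a Fact with a citation to \cite[Lemma~2.2]{simon-decomposition}, so there is no in-paper proof to compare against. Your argument is correct: the $(\Rightarrow)$ direction is a straightforward unwinding, and for $(\Leftarrow)$ the key point---that the existential quantifier over $(a,\bar b')$ commutes with the infinite conjunction defining $X$---is exactly the compactness/saturation step you identify (the partial type $\{\theta_i(x,b,\bar y') : i \in I\} \cup \{\neg\phi(x,b)\}$ is finitely satisfiable in $\U$, hence realised there). One small remark: your appeal to \cref{rem:extension of partial global type} is not really needed, since $\pi|_{Ab} \cup \{\neg\phi(x,b)\}$ is already a small partial type contained in the consistent $\pi \cup \{\neg\phi(x,b)\}$, hence realised in $\U$ directly by saturation.
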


\begin{definition} \label{def:GS partial type}
  Let $\pi(x)$ be a global partial type. We say that $\pi$ is \defn{generically stable over $A$} if $\pi$ is ind-definable over $A$ and the following holds:

  \hypertarget{GS}{(GS)} if $\varsequence{a_k}{k<\omega}$ is such that $a_k \vDash \pi|_{Aa_{<k}}$ and $\pi \vdash \phi(x,b)$, then for all but finitely many values of $k$ we have $\U \vDash \phi(a_k,b)$.
\end{definition}

\begin{remark}
  Note that a global type $p(x) \in S^x(\U)$ is generically stable over $A$ as in \cref{def:GS global type} iff it is generically stable over $A$ as a partial type (note that it is $A$-definable by \cref{fac:generically stable -> dfs}).
\end{remark}

\begin{remark}
  Much like in \cref{rem:compressible iff finite restrictions}, a global partial type $\pi(x)$ is generically stable iff its restriction to any finite sub-tuple $x'$ of $x$ is generically stable. (Note that we do not assume that $\pi$ is ind-definable.)

  Why? Clearly if the restrictions are all generically stable then $\pi$ is, so we show the converse. Assume that $\pi$ is generically stable and fix some finite $x' \subseteq x$. First note that $\pi \restriction x'$ is ind-definable, so we show \hyperlink{GS}{(GS)}. Assume that $\varsequence{a_k'}{k<\omega}$ is such that $a_k' \vDash (\pi\restriction x')|_{Aa'_{<k}}$ and $\pi \vdash \phi(x',b)$. By induction on $n<\omega$ we construct sequences $\varsequence{a_i}{i<n}$ such that $a_i \restriction x' = a_i'$ and $a_i \vDash \pi|_{Aa_{<i}}$ for all $i<n$. Suppose we found such a sequence $\varsequence{a_i}{i<n}$. Since $a_n' \vDash (\pi\restriction x')|_{Aa'_{<k}}$, as in \cref{rem:extension of partial global type} there is some $a_n'' \vDash \pi|_{Aa_{<n}}$ such that $(a_n''\restriction x') \equiv _{{Aa'_{<n}}} a_n'$. Let $\sigma$ be an automorphism fixing ${Aa'_{<n}}$ such that $\sigma(a_n''\restriction x') = a_n'$. Then $\varsequence{\sigma(a_i)}{i<n}\sigma(a_n'')$ is a sequence of length $n+1$ which is as required. By compactness and \cref{fac:ind-def iff realising over is tp-def}, there is some sequence $\varsequence{a_i}{i<\omega}$ such that $a_i \restriction x' = a_i'$ and $a_i \vDash \pi|_{Aa_{<i}}$ for all $i<\omega$. By \hyperlink{GS}{(GS)} for $\pi$, for all but finitely many values of $k$ we have $\vDash \phi(a_k',b)$, as required. 
\end{remark}

\begin{definition}
  We say that a global partial type $\pi(x)$ is \defn{finitely satisfiable in $A \subseteq \U$} if any formula implied by $\pi$ has a realisation in $A$.
\end{definition}

\begin{fact}\label{fac:GS and non-forking}
  Let $\pi(x)$ be a global partial type generically stable over $A$. Then:

  \hypertarget{FS}{(FS)} $\pi$ is finitely satisfiable in every model containing $A$.

  \hypertarget{NF}{(NF)} Let $\phi(x,b)$ be such that $\pi \vdash \phi(x,b)$ and take $a\vDash \pi|_A$ such that $\vDash \neg \phi(a,b)$. Then both $\tp(b/Aa)$ and $\tp(a/Ab)$ fork over $A$.
\end{fact}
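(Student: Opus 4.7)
My plan is to handle (FS) and (NF) separately, as only the latter really uses the (GS) condition.

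For (FS), a short direct argument from ind-definability suffices and does not invoke (GS) at all. Suppose $\pi \vdash \phi(x,b)$ with $b \in \U^y$, and let $M \supseteq A$ be a model. By ind-definability of $\pi$ over $A$, there is an $\L(A)$-formula $\chi(y)$ with $\vDash \chi(b)$ and such that $\pi \vdash \phi(x,c)$ for every $c$ realising $\chi$. Hence for any $a \vDash \pi$ in $\U$ and any $c \vDash \chi$ we have $\vDash \phi(a,c)$, so
\[\pi \vdash \forall y(\chi(y) \rightarrow \phi(x,y)).\]
Since $\pi$ is consistent, the $\L(A)$-sentence $\exists x\,\forall y(\chi(y) \rightarrow \phi(x,y))$ is consistent, hence holds in the model $M \supseteq A$. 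Any witness $m \in M$ then satisfies $\forall y(\chi(y) \rightarrow \phi(m,y))$ in $\U$ by elementarity, and $\vDash \chi(b)$ gives $\vDash \phi(m,b)$, as required.

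For (NF), I first show $\tp(b/Aa)$ forks over $A$, then deduce the dual statement by symmetry. Keep the formula $\chi(y)$ from above. The $\L(Aa)$-formula $\chi(y) \wedge \neg \phi(a,y)$ lies in $\tp(b/Aa)$ since $\vDash \chi(b)$ and $\vDash \neg\phi(a,b)$. I claim it divides over $A$. To exhibit a witnessing sequence, I build a long Morley sequence $(a_i)_{i<\kappa}$ of $\pi$ over $A$ with $a_0 = a$ (so $a_i \vDash \pi|_{Aa_{<i}}$ for $i \geq 1$) and extract an $A$-indiscernible subsequence still starting with $a$, using \cref{fac:ind-def iff realising over is tp-def} to ensure the Morley property (being type-definable over $A$) is preserved by the extraction. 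The set $\{\chi(y) \wedge \neg\phi(a_i,y) : i < \omega\}$ is then inconsistent: a realisation $b'$ would satisfy $\vDash \chi(b')$, hence $\pi \vdash \phi(x,b')$, so by (GS) $\vDash \phi(a_i,b')$ for all but finitely many $i$, contradicting $\vDash \neg\phi(a_i,b')$ for every $i$. So the formula divides over $A$ and $\tp(b/Aa)$ forks over $A$.

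For $\tp(a/Ab)$, I invoke symmetry of non-forking when one side is generically stable: since $a \vDash \pi|_A$ with $\pi$ generically stable over $A$, we have $b \not\downarrow_A a \Rightarrow a \not\downarrow_A b$, which combined with the previous paragraph yields that $\tp(a/Ab)$ forks over $A$. The main obstacle is the construction of the $A$-indiscernible Morley sequence beginning with $a$: for complete generically stable types this is automatic, but in the partial ind-definable setting one must combine Ramsey-style extraction with the type-definable characterisation of the Morley condition from \cref{fac:ind-def iff realising over is tp-def}, while retaining $a_0 = a$; alternatively, one can avoid indiscernibility entirely and apply Shelah's $k$-inconsistency form of dividing directly to the unrefined Morley sequence, since the inconsistency argument above is uniform.
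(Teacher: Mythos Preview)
The paper states this as a Fact without proof (it is Lemma~2.3 of \cite{simon-decomposition}), so there is no argument to compare against directly; however, your proof of (FS) contains a genuine error. The step
\[
\pi \vdash \forall y\,(\chi(y) \rightarrow \phi(x,y))
\]
does not follow from ind-definability: a realisation $a \vDash \pi$ lives in some $\U' \succ \U$, and while $\phi(a,c)$ holds for every $c \in \U$ with $\chi(c)$ (since then $\phi(x,c)$ is a formula in $\pi$), you have no control over $\phi(a,c)$ for $c \in \U' \setminus \U$. Concretely, in DLO with $A=\emptyset$, the type at $+\infty$, $\pi(x)=\{x>c:c\in\U\}$, is ind-definable over $\emptyset$ (indeed $\emptyset$-definable) but not generically stable; taking $\phi(x,y)=(x>y)$ and $\chi(y)=(y=y)$, we have $\pi \vdash x > c$ for every $c$, yet $\forall y(x>y)$ is inconsistent in DLO and $\exists x\,\forall y(x>y)$ fails in every model. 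Since you explicitly say your (FS) argument ``does not invoke (GS) at all'' and it applies verbatim to this $\pi$, it cannot be correct. The proof in \cite{simon-decomposition} does use (GS).

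For (NF), your argument that $\tp(b/Aa)$ forks is essentially correct once you arrange $a_i \equiv_A a$ throughout the Morley sequence (possible since $\pi \cup \tp(a/A)$ is consistent by \cref{rem:extension of partial global type}) before extracting indiscernibles; standard extraction does not preserve the first element, and your $k$-inconsistency alternative still needs the $a_i$ to share the type of $a$ over $A$. Your appeal to symmetry for $\tp(a/Ab)$, however, is not justified as stated: the symmetry you cite is a property of \emph{complete} generically stable types, whereas here $a$ merely realises $\pi|_A$ for a generically stable \emph{partial} type, and $\tp(a/A)$ need not extend to any complete generically stable global type. This half also requires a direct argument.
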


We now state \cite[Theorem~4.1]{simon-decomposition} in the form we will use it below. Our formulation follows from the proof (rather than the statement) of \cite[Theorem~4.1]{simon-decomposition}, in particular from \cite[Proposition~4.7]{simon-decomposition}.

\begin{fact} \cite[Proposition~4.7]{simon-decomposition} \label{fac:decomp}
  Given a type $\tp(a/A)$ and $q \in \Sfs{A}{\U}$,
  there exists a global partial type $\pi(x)$ generically stable over $A$ with $a \vDash  \pi|_A$ such that if $(X,<)$ is an infinite linear order, $I \vDash  q^{(X)}|_{Aa}$,
  $b \vDash  q|_{AI}$ and $a \vDash  \pi|_{AIb}$, then $b \vDash  q|_{Aa}$. 
\end{fact}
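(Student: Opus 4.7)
The plan is to realize $\pi$ as an \emph{eventual type} of $a$ along a long Morley sequence of $q$ over $Aa$. Fix an infinite Morley sequence $I = \varsequence{b_i}{i<\omega}$ of $q$ over $Aa$; by construction $I$ is indiscernible over $Aa$. For each $\L(A)$-formula $\psi(x,\bar y)$ and each $\bar c \in \U$, NIP applied to the indiscernible sequence $I$ over $Aa\bar c$ yields bounded alternation for $\psi(a,\bar b,\bar c)$ as $\bar b$ ranges over increasing tuples from $I$, so its truth value is eventually constant. I would then declare $\phi(x,\bar c)\in\pi$ iff there exists some $\L(A)$-formula $\psi(x,\bar y)$ with $\vDash\psi(a,\bar b)$ on a tail of $I$ and $\psi(x,\bar b)\vdash\phi(x,\bar c)$ for every such $\bar b$. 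Then $a\vDash\pi|_A$ is immediate.

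Next I would verify that $\pi$ is generically stable over $A$. Ind-definability over $A$ comes from rewriting the witness condition in terms of $q$ rather than the specific $I$: since $q$ is the unique $A$-invariant type realised by the $b_i$ over increasing initial segments, ``$\psi(a,\bar b)$ holds on a tail'' translates into an $A$-type-definable condition on $\bar c$ via a suitable $q^{(n)}$, and ind-definability follows by taking the union over $\psi$. For condition \hyperlink{GS}{(GS)}, let $\sequence{a_k}{k<\omega}$ be a Morley sequence of $\pi$ over $A$ and $\phi(x,b)\in\pi$ with witnessing $\psi$. Building a fresh Morley sequence of $q$ over $Aa_{\le k}$ and applying the eventual truth of $\psi$ there, with NIP bounding alternation uniformly in $k$, yields $\vDash\phi(a_k,b)$ for cofinitely many $k$.

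For the compatibility condition, suppose $I'\vDash q^{(X)}|_{Aa}$ with $X$ infinite, $b\vDash q|_{AI'}$, and $a\vDash\pi|_{AI'b}$. Since $\pi$ is ind-definable over $A$ (and hence independent of the choice of witnessing Morley sequence), tails of $I'$ can be used in place of tails of $I$ to witness membership in $\pi$. If $b\not\vDash q|_{Aa}$, pick $\phi(y,a)\in q$ with $\vDash\neg\phi(b,a)$. On the other hand, since $b\vDash q|_{AI'}$, appending $b$ to $I'$ extends it further along $q$; using $a\vDash\pi|_{AI'b}$ with witnessing parameters drawn from that extended sequence, I would derive that $\phi(b,a)$ must in fact hold, yielding the desired contradiction.

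The main obstacle is making the ind-definability of $\pi$ over $A$ work without reference to any particular Morley sequence --- this is exactly what allows the witness machinery to transfer from $I$ to any $I'$ in the compatibility step. The key technical input throughout is NIP via bounded alternation, which makes ``eventual'' well-defined and uniform across Morley sequences of $q$ over $Aa$.
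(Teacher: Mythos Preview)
The paper does not prove this statement; it is cited as \cite[Proposition~4.7]{simon-decomposition}, and only \cref{rem:on decomposition} comments on how the stated form is extracted from that source. So there is no proof in the paper to compare against; what you have written is a sketch of the argument from the cited reference.

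Your high-level strategy---defining $\pi$ as the ``eventual type'' of $a$ along Morley sequences of $q$ over $Aa$---is indeed Simon's approach, and the definition you give for $\pi$ is essentially the right one. However, the execution has real gaps.

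First, the ind-definability argument is wrong as written. You say the condition ``translates into an $A$-type-definable condition on $\bar c$,'' and then conclude ind-definability by taking the union over $\psi$. But a union of type-definable sets need not be ind-definable; you need each $\psi$-piece to be $A$-\emph{definable} (or at least relatively open), and that requires an actual argument using NIP and the specific shape of the condition---it is not automatic from $q$ being $A$-invariant.

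Second, and more seriously, the compatibility paragraph does not contain a proof. You write ``using $a\vDash\pi|_{AI'b}$ with witnessing parameters drawn from that extended sequence, I would derive that $\phi(b,a)$ must in fact hold,'' but this is precisely the content of the proposition. Appending $b$ to $I'$ gives a sequence indiscernible over $A$, not over $Aa$; the whole difficulty is that $b$ need not relate to $a$ the way the tail of $I'$ does. You have to explain \emph{which} formula in $\pi|_{AI'b}$, applied to $a$, forces $\phi(b,a)$; in Simon's argument this comes from an explicit analysis of how the witnessing $\psi$'s interact with one further step of the Morley sequence, and that step is missing here. The (GS) paragraph is similarly too compressed to evaluate.

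In short: right idea, right construction, but the two substantive verifications (ind-definability over $A$, and the final implication $a\vDash\pi|_{AIb}\Rightarrow b\vDash q|_{Aa}$) are asserted rather than argued.
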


\begin{remark} \label{rem:on decomposition}
  \begin{enumerate}[(i)]
    \item In \cite[Proposition~4.7]{simon-decomposition} the generically stable type constructed depends on $q$. Call it $\pi_q$. However, in the paragraph after \cite[Proposition~4.7]{simon-decomposition}, it is remarked that taking $\pi$ to be the union of all the $\pi_q$ works for all $q$.
    \item It is not assumed in \cite{simon-decomposition} that the the tuple $a$ above is finite.
    \item Throughout the proof of \cite[Theorem~4.1]{simon-decomposition}, the sequences are assumed to be densely ordered without endpoints. In particular, that is the case for $I$ above. However, the result is true for any infinite $I$. Indeed, suppose $I,a,A,b$ are as in \cref{fac:decomp}. By Ramsey and compactness (and as $I$ is infinite) there is an $Aab$-indiscernible sequence $I' = \sequence{a_i}{i \in \Q}$ realising the EM-type of $I$ over $Aab$. Since $I$ is $Aa$-indiscernible, it follows that $I' \vDash q^{(\Q)}|_{Aa}$, and since $q$ is $A$-invariant, $b \vDash q|_{AI'}$. Also, by \cref{fac:ind-def iff realising over is tp-def}, $a \vDash \pi|_{AI'b}$. So $I'$ satisfies all the requirements of \cref{fac:decomp} and is densely ordered with no endpoints, so $b \vdash  q|_{Aa}$ as required.
    \item In the context of \cref{fac:decomp}, it follows (by applying an automorphism; note that both $q$ and $\pi$ are $A$-invariant) that if $a' \vDash  \pi|_{AIb} \cup \tp(a/AI)$, then $b \vDash  q|_{Aa'}$.
   \end{enumerate}
\end{remark}



\begin{proposition} \label{prop:compressible iff orth to MS}
  Let $(X,<)$ be any (small) infinite linearly ordered set. A type $\tp(a/A)$ is compressible if and only if
  for every $q \in \Sfs{A}{\U}$,
  if $I \vDash  q^{(X)}|_{Aa}$,
  then \[q|_{AI} \vdash  q|_{Aa}.\]
  Moreover, if $X$ has no first element then $q|_{AI} \vdash  q|_{AIa}$.
\end{proposition}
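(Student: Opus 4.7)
The proof has two directions, both passing through \cref{cor:orthFSComp}. For the backward direction $(\Leftarrow)$, I would construct a witness $d$ for \cref{cor:orthFSComp} by fixing, for each $q \in \Sfs{A}{\U}$, a Morley sequence $I_q \vDash q^{(\omega)}|_{Aa}$ and taking $d$ to be the concatenation of all these $I_q$. Then $\tp(d/Aa)$ is finitely satisfiable in $A$ since each $q^{(\omega)}|_{Aa}$ is, and for each $q$, $q|_{Ad} \supseteq q|_{AI_q} \vdash q|_{Aa}$ by hypothesis, so \cref{cor:orthFSComp} yields compressibility of $\tp(a/A)$.

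For the forward direction $(\Rightarrow)$, assume $\tp(a/A)$ is compressible, fix $q \in \Sfs{A}{\U}$ and $I \vDash q^{(X)}|_{Aa}$, and let $b \vDash q|_{AI}$. I would apply the type decomposition \cref{fac:decomp} (invoking \cref{rem:on decomposition}(iii) to allow arbitrary infinite $X$) to obtain a global partial type $\pi$ generically stable over $A$ with $a \vDash \pi|_A$ such that $a \vDash \pi|_{AIb}$ implies $b \vDash q|_{Aa}$. The task thus reduces to verifying $a \vDash \pi|_{AIb}$. Suppose for contradiction that some $\phi(x,e) \in \pi$ with $e \in AIb$ satisfies $\vDash \neg\phi(a,e)$; then by \cref{fac:GS and non-forking}(NF), $\tp(a/AIb)$ forks over $A$. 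To derive a contradiction from compressibility, note that $\tp(Ib/A)$ is finitely satisfiable in $A$ (being the initial segment of a Morley sequence of $q$), so extends to a global type $r \in \Sfs{A}{\U}$, and \cref{fac:compressible iff weakly orthogonal} (applied in a saturated elementary extension containing $AIb$) gives weak orthogonality of $\tp(a/A')$ and $r|_{A'}$. I would then translate this weak orthogonality into the statement that $\tp(a/AIb)$ is a non-forking extension of $\tp(a/A)$, contradicting the forking derived above.

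For the ``moreover'' clause when $X$ has no first element, I would apply the base claim iteratively to tails of $I$: for each $j \in X$ the segment $I_{\geq j}$ is an infinite Morley sequence of $q$ over $AaI_{<j}$ by the $Aa$-indiscernibility of $I$, so a suitable rescoped application of the base claim (leveraging the finite satisfiability properties of the prefixes $I_{<j}$) gives $q|_{AI} \vdash q|_{AaI_{<j}}$, and taking the union over all $j$ yields $q|_{AI} \vdash q|_{AIa}$.

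The main obstacle I expect is the compressibility-implies-non-forking step in the forward direction: one must carefully convert the weak orthogonality furnished by \cref{fac:compressible iff weakly orthogonal} into the non-forking of $\tp(a/AIb)$ over $A$, which in an NIP theory ultimately rests on characterising non-forking as extendibility to an $A$-invariant global type and matching this invariance up with the $A$-invariant extension $r$ of $\tp(Ib/A)$.
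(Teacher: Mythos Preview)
Your forward direction has a genuine gap at exactly the point you flag as an obstacle. You apply \cref{fac:decomp} to $\tp(a/A)$ and then need $a \vDash \pi|_{AIb}$, which via \hyperlink{NF}{(NF)} you reduce to showing that $\tp(a/AIb)$ does not fork over $A$. But compressibility of $\tp(a/A)$ does not give this: the weak orthogonality in \cref{fac:compressible iff weakly orthogonal} says that $\tp(a/A')$ together with any $r \in \Sfs{A}{A'}$ determines a complete type, yet the actual tuple $Ib$ need not realise $r|_{Aa}$ for your chosen extension $r$ of $\tp(Ib/A)$; indeed, whether $\tp(b/Aa)$ agrees with $q|_{Aa}$ is precisely the question. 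So nothing prevents $\tp(Ib/Aa)$ (and hence $\tp(a/AIb)$) from forking over $A$, and your contradiction does not materialise.

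The paper's key move avoids this by applying \cref{fac:decomp} not to $a$ but to the tuple $d$ from \cref{cor:orthFSComp}. The point is that $\tp(d/Aa)$ is finitely satisfiable in $A$ by construction, and after arranging $I \vDash q^{(X)}|_{Aad}$ one also has $\tp(I/Aad)$ finitely satisfiable in $A$; two applications of \hyperlink{NF}{(NF)} then give $d \vDash \pi|_{AIa}$ without any hypothesis on $b$. A long Morley sequence $(d_i)_{i<\kappa}$ in $\pi \cup \tp(d/AIa)$ over $AIa$ must, by \hyperlink{GS}{(GS)}, contain some $d_i \vDash \pi|_{AIb}$; then $b \vDash q|_{Ad_i}$ by the decomposition, and $d_i \equiv_{Aa} d$ yields $b \vDash q|_{Aa}$. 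Two smaller points: your backward direction also has a gap, since the concatenation of all the $I_q$ need not have type over $Aa$ finitely satisfiable in $A$; the paper instead invokes \cref{fac:compressible iff weakly orthogonal}(3$\Rightarrow$1) directly, realising a single $I$ inside $A'$ by saturation. For the ``moreover'', rather than iterating over tails (which would require compressibility of $\tp(a/AI_{<j})$, not established), the paper extends $I$ to $I'+I$ indiscernible over both $Aa$ and $Ab$ and applies the main statement once to $q^{(X+1)}$.
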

\begin{proof}
  That this condition implies compressibility follows from \cref{fac:compressible iff weakly orthogonal}(3$\Rightarrow$1), since $I$ can be taken from $A'$ (where $(A',a)$ is saturated enough).

  For the converse, assume compressibility and let $q, I$ be as in the proposition. Let $d$ be given by \cref{cor:orthFSComp} so that $q|_{Ad} \vdash  q|_{Aa}$ and $\tp(d/Aa)$ is finitely satisfiable in $A$. By perhaps changing $d$ (by applying an automorphism fixing $Aa$), we may assume that $I \vDash q^{(X)}|_{Aad}$.

  Let $b \vDash  q|_{AI}$. We want to show that $b \vDash q|_{Aa}$.

  Applying \cref{fac:decomp} (and \cref{rem:on decomposition}(ii),(iv)) to $\tp(d/A)$, we obtain a generically stable over $A$ global partial type $\pi$ with $d \vDash  \pi|_A$ such that $b \vDash  q|_{Ad'}$ for any $d' \vDash  \pi|_{AIb} \cup \tp(d/AI)$.

  Now $\tp(d/Aa)$ is finitely satisfiable in, and hence does not fork over, $A$. Similarly, $\tp(I/Aad)$ is finitely satisfiable in $A$ and so does not fork over $Aa$. By applying \hyperlink{NF}{(NF)} twice, it follows that $d \vDash  \pi|_{AIa}$.

  Hence $\pi' := \pi \cup \tp(d/AIa)$ is consistent by \cref{rem:extension of partial global type}.
  Let $\kappa := |\L(AIb)|^+$,
  and let $(d_i)_{i < \kappa}$ be a Morley sequence in $\pi'$ over $AIa$, i.e., $d_i \vDash  \pi'|_{AIad_{<i}}$.
  By \hyperlink{GS}{(GS)} and the choice of $\kappa$, for some $i<\kappa$ we have $d_i \vDash  \pi|_{AIb}$.
  Then $d_i \vDash  \pi|_{AIb} \cup \tp(d/AI)$, so $b \vDash  q|_{Ad_i}$.
  But $d_i \equiv _{Aa} d$, so $q|_{Ad_i} \vdash  q|_{Aa}$.
  So $b \vDash  q|_{Aa}$, as required.

  We conclude the ``moreover'' part. Suppose that $X$ has no first element and that $I\vDash q^{(X)}|_{Aa}$, $b\vDash q|_{AI}$. By compactness we can find some $I'$ of order type $\omega \times (X+1)$ (where $X+1$ is adding one more element in the end of $X$ and the product is ordered lexicographically), such that $I' + I$ is indiscernible over $Aa$ and over $Ab$. Thus, partitioning $I'$ into $(X+1)$-sequences, we have that $I' \vDash (q^{(X+1)})^{(\omega)}|_{Aa}$, and $(I+b) \vDash q^{(X+1)}|_{AI'}$. Applying the first part to $q^{(X+1)}$, we have that $(I+b) \vDash q^{(X+1)}|_{Aa}$. It follows that $b \vDash q|_{AIa}$ as required.
\end{proof}

The following corollary will not be used in this paper.
\begin{corollary} \label{cor:generically co-distal}
  A type $p=\tp(a/A) \in S(A)$ is compressible iff for any $q \in \Sfs{A}{\U}$ and any Morley sequence $I:=I_1 + (b) + I_2$ of $q$ over $A$ where $I_1$ has no first element, if $I_1 + I_2$ is a Morley sequence of $q$ over $Aa$ then so is $I$.
\end{corollary}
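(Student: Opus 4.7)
The plan is to derive this directly from \cref{prop:compressible iff orth to MS}, using its ``moreover'' clause in both directions; indeed, the asymmetry of the decomposition $I_1 + (b) + I_2$ (with $I_1$ required to have no first element) mirrors exactly the asymmetry of that clause.

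For the forward implication, I would assume $p = \tp(a/A)$ compressible and fix a Morley sequence $I = I_1 + (b) + I_2$ of $q$ over $A$, with $I_1$ having no first element and $I_1 + I_2$ a Morley sequence of $q$ over $Aa$. To show $I$ is Morley over $Aa$, it suffices to verify that for every finite $F \subseteq_{\fin} (b) + I_2$, the sequence $I_1 + F$ (in its inherited order) is a Morley sequence of $q$ over $Aa$; this I would prove by induction on $|F|$. The base case $F = \emptyset$ holds by hypothesis. For the inductive step, let $c$ be the maximum of $F$ and set $X := I_1 + (F \setminus \{c\})$; by the induction hypothesis $X$ is Morley over $Aa$, and it has no first element since $I_1$ does. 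Because $I$ is Morley over $A$ and $X \subseteq I_{<c}$, we have $c \vDash q|_{AX}$, so the ``moreover'' of \cref{prop:compressible iff orth to MS} yields $c \vDash q|_{Aa X}$, completing the induction.

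For the converse, I would start by observing (via \cref{prop:compressible iff orth to MS}) that compressibility of $\tp(a/A)$ follows once we show, for every $q \in \Sfs{A}{\U}$ and every Morley sequence $J$ of $q$ over $Aa$ indexed by some infinite linear order $X$ with no first element (say $X = \mathbb{Z}$), that $q|_{AJ} \vdash q|_{Aa}$. So let $b \vDash q|_{AJ}$. The key move is to build an ambient Morley sequence over $A$ into which $b$ fits as an interior point: pick any infinite Morley sequence $J'$ of $q$ over $AaJb$. Then $J + (b) + J'$ is a Morley sequence of $q$ over $A$ (use that $J$ is Morley over $Aa$ hence over $A$, $b \vDash q|_{AJ}$ by choice, and each element of $J'$ realizes $q$ over $AJb$ together with its predecessors), while $J + J'$ is a Morley sequence of $q$ over $Aa$ (since $J'$ is also Morley over $AaJ$). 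Taking $I_1 := J$, which has no first element, the hypothesis applies and yields that $J + (b) + J'$ is Morley over $Aa$, hence $b \vDash q|_{AaJ}$ and in particular $b \vDash q|_{Aa}$.

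The main subtlety will be the forward direction's bookkeeping: we must make sure the induction captures arbitrarily large (possibly non-well-ordered) initial segments of $(b) + I_2$. Reducing via compactness to finite $F$, and always pulling out the maximum element $c$ so that the previously-verified prefix $I_1 + (F\setminus\{c\})$ has no first element, lets the ``moreover'' clause be applied cleanly at each step; this is the substantive point. The converse, by contrast, is essentially a bookkeeping exercise once one has the idea of extending $b$ by a Morley tail $J'$ over $AaJb$ in order to package the data as the hypothesis requires.
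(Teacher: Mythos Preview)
Your proof is correct and follows essentially the same route as the paper's. The forward direction is exactly the paper's induction (applying the ``moreover'' clause of \cref{prop:compressible iff orth to MS} to successively longer finite prefixes of $(b)+I_2$), just spelled out in more detail. For the converse, the paper simply says ``clear by \cref{prop:compressible iff orth to MS}'', and indeed your extension by a Morley tail $J'$ is unnecessary: you may take $I_2 = \emptyset$, since $J + (b)$ is already a Morley sequence of $q$ over $A$ with $I_1 = J$ having no first element and $I_1 + I_2 = J$ Morley over $Aa$, so the hypothesis directly gives $b \vDash q|_{AaJ}$.
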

\begin{proof}
  Right to left is clear by \cref{prop:compressible iff orth to MS}, so suppose that $p$ is compressible, and we are given $I$ as above. Let $\varsequence{b_i}{1\leq i<n}$ be some finite subsequence from $I_2$ and let $b_0 = b$. Then by applying the ``moreover'' part of \cref{prop:compressible iff orth to MS} inductively, $b_i \vDash q|_{AI_1b_{<i}a}$. Since this is true for any $n<\omega$, $I$ is $Aa$-indiscernible.
\end{proof}


\begin{remark}
  One might call the condition in \cref{cor:generically co-distal} \defn{generic co-distality}: it is co-distality in a generic sense. For a definition of distal and co-distal types and a short discussion, see \cite[Definition~4.21 and Remark~4.22]{MR4216280}.
\end{remark}

\begin{definition}
  Suppose $B,A \subseteq \U$ are (small) sets. Say that a type $p \in S(A)$ \defn{is compressible up to $B$} if $p$ is compressible in $\U_B$ (in the language $\L(B)$): in \cref{def:global compression}, all the formulas are over $B$.
\end{definition}

\begin{remark} \label{rem:comp up to the difference}
  Suppose that $A \subseteq B$, $p \in S(A)$ and $p$ is compressible up to $B$. Then $p$ is compressible up to $C:=B\setminus A$: given a formula $\phi(x,y)$ over $C$, there is an $\L$-formula $\psi(x,z,w,t)$ and $a \in A^w,c\in C^t$ such that $\psi(x,z,ac)$ compresses $p\restriction \phi$. But then $\psi(x,z,w,c)$ compresses $p \restriction \phi$.
\end{remark}

\begin{proposition} \label{prop:comp over smaller set with constants}
  If a type $\tp(a/B)$ is compressible and $A \subseteq  B$,
  then $\tp(a/A)$ is compressible up to $B$.
\end{proposition}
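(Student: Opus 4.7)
The plan is to invoke Proposition~\ref{prop:compressible iff orth to MS} twice: once in the expansion $\U_B$ to reformulate what needs to be shown, and once in $\U$ to apply the hypothesis. The payoff comes from the inclusion $\Sfs{A}{\U} \subseteq \Sfs{B}{\U}$ (valid since $A \subseteq B$) together with the observation that, in $\U_B$, restrictions to $A$ automatically include all $B$-parameters as constants.

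First, I would unfold the conclusion. Since $\Th(\U_B)$ is NIP (adding constants preserves NIP), Proposition~\ref{prop:compressible iff orth to MS} applied in $\U_B$ with base $A$ reduces the task to showing: for every infinite linearly ordered $(X,<)$ and every $q' \in \Sfs{A}{\U_B}$, if $I \vDash (q')^{(X)}|_{Aa}$ in $\U_B$ then $q'|_{AI} \vdash q'|_{Aa}$ in $\U_B$. Next I would translate across languages. An $\L(B)$-type $q'$ over $\U$ corresponds naturally to an $\L$-type $q$ over $\U$ by reading $B$-constants as the corresponding elements of $\U$; this bijection preserves finite satisfiability in $A$, so $q' \in \Sfs{A}{\U_B}$ yields $q \in \Sfs{A}{\U}$, which in turn lies in $\Sfs{B}{\U}$. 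Since $\Aut(\U_B) = \Aut(\U/B)$, $A$-invariance in $\U_B$ coincides with $B$-invariance in $\U$, so $(q')^{(X)}$ corresponds to the tensor power $q^{(X)}$ computed from $B$-invariance. Moreover restrictions $q'|_{Aa}$ and $q'|_{AI}$ in $\U_B$ become $q|_{Ba}$ and $q|_{BI}$ in $\U$ respectively.

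Finally, I would apply Proposition~\ref{prop:compressible iff orth to MS} in $\U$ with base $B$ to the hypothesis that $\tp(a/B)$ is compressible: this yields, for every $r \in \Sfs{B}{\U}$ and every infinite $(X,<)$, that $I \vDash r^{(X)}|_{Ba}$ implies $r|_{BI} \vdash r|_{Ba}$. Taking $r=q$ and transporting the conclusion back through the translation delivers exactly the orthogonality required in $\U_B$. I do not anticipate a serious obstacle: the argument is essentially bookkeeping, the only subtle point being the identification of tensor powers between the two settings, which goes through because the unique $A$-invariant extension of $q$ to a larger monster is automatically $B$-invariant and coincides with the extension used for the Morley powers of $q'$.
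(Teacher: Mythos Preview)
Your proposal is correct and follows essentially the same route as the paper's proof: apply Proposition~\ref{prop:compressible iff orth to MS} in $\U_B$ to unfold the conclusion, translate the data from $\L(B)$ over $A$ to $\L$ over $B$ (so that $q'|_{Aa}$, $q'|_{AI}$, $(q')^{(X)}|_{Aa}$ become $q|_{Ba}$, $q|_{BI}$, $q^{(X)}|_{Ba}$ respectively), and then apply Proposition~\ref{prop:compressible iff orth to MS} in $\U$ with base $B$ using the compressibility of $\tp(a/B)$ and the inclusion $\Sfs{A}{\U} \subseteq \Sfs{B}{\U}$. Your additional care in identifying the tensor powers via $\Aut(\U_B/A)=\Aut(\U/B)$ makes explicit a step the paper leaves tacit, but the argument is the same.
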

\begin{proof}
  We use \cref{prop:compressible iff orth to MS}, so we are given $q \in \Sfs{A}{\U_B}$, and we need to show that if $I \vDash q^{(\omega)}|_{Aa}$ then $q|_{AI} \vdash q|_{Aa}$, all working in $\U_B$. So suppose that in $\U_B$, $b \vDash q|_{AI}$ and we need to show that $b \vDash q|_{Aa}$.

  Taking the reduct to $\L$ (and identifying $q \restriction \L$ with $q$), $q \in \Sfs{A}{\U}$, $I \vDash q^{(\omega)}|_{Ba}$, $b \vDash q|_{BI}$ and we need to show that $b \vDash q|_{Ba}$. As $\tp(a/B)$ is compressible, \cref{prop:compressible iff orth to MS} implies exactly that, and we are done.
\end{proof}

We can now generalise \cref{cor:density of comp in countable NIP} to uncountable theories induced by adding constants to countable NIP theories (except that in the final clause we not obtain strength of the compressibility).


\begin{corollary} \label{cor:naming constants still comp}
  Suppose $T$ is countable and let $B \subseteq \U$.

  Suppose $A \subseteq \U$ is a set of parameters and $x$ is a countable tuple of variables. Then, compressible types are dense in $S^x(A)$ in $\Th(\U_B)$:

  Working in $\U_B$, if $\theta(x)$ is a consistent ($\L_B$-)formula over $A$,
  then there exists a compressible type $p(x) \in S(A)$ with $p(x) \vdash  \theta(x)$.

  More generally, if, working in $\U_B$, $\pi$ is a t-compressible partial type over $A$, then there exists a compressible $p \in S(A)$ with $\pi \subseteq  p$.
\end{corollary}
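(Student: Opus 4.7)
The plan is to reduce from $\Th(\U_B)$ (potentially uncountable) to the countable NIP theory $T$, transporting compressibility via the rescoping results of \cref{sec:decompComp}. The first clause is the special case of the second with $\pi := \{\theta\}$: writing $\theta(x) = \theta_0(x, c, b)$ for some $\L$-formula $\theta_0$, $c$ from $A$ and $b$ from $B$, the partial type $\{\theta\}$ is t-compressed within itself in $\U_B$ by the $\L_B$-formula $\zeta(x, z) := \theta_0(x, z, b)$ at parameter $c$. So it suffices to prove the ``more generally'' part.

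For the general statement, fix a t-compressible $\pi(x)$ over $A$ in $\U_B$, t-compressed within itself by some $\L_B$-formula $\zeta(x, z) = \zeta_0(x, z, b_0)$ with $\zeta_0$ an $\L$-formula and $b_0 \in B^w$. Re-interpret $\pi$ as an $\L$-partial type over $A \cup B$ in $\U$ (each $\L_B(A)$-formula being an $\L$-formula with parameters in $A \cup B$). The first step is to verify that this reinterpreted $\pi$ is t-compressible in $\U$ with respect to $A \cup B$, via the $\L$-formula $\zeta_0(x, z, w)$: given a finite $A_0' \subseteq A \cup B$, set $A_0 := A_0' \cap A$, and the $\L_B$-compression provides $a' \in A^z$ with $\pi \vdash \zeta_0(x, a', b_0) \vdash \pi|_{A_0}$ in $\U_B$, which in $\U$ reads $\pi \vdash \zeta_0(x, a', b_0) \vdash \pi|_{A_0 \cup B}$ and hence implies $\pi|_{A_0'}$, with parameters $(a', b_0) \in (A \cup B)^{zw}$. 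Then \cref{cor:density of comp in countable NIP}, applied in the countable NIP theory $T$ over the (possibly large) parameter set $A \cup B$, yields a strongly compressible $\L$-type $p \in S(A \cup B)$ extending $\pi$.

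Now rescope: let $a \vDash p$, so that $\tp(a/A \cup B) = p$ is compressible in $\L$. By \cref{prop:comp over smaller set with constants}, since $A \subseteq A \cup B$, the type $\tp(a/A)$ is compressible up to $A \cup B$; and by \cref{rem:comp up to the difference} it is then compressible up to $(A \cup B) \setminus A \subseteq B$, hence compressible in $\U_B$. Viewed as a type in $S(A)$ in $\U_B$, this $\tp(a/A)$ is the desired compressible extension of $\pi$. The main obstacle is the bookkeeping in the first step, where the $B$-parameters implicit in the $\L_B$-compressing formula must be absorbed into additional variables of an $\L$-compressing formula, and one must check that finite-domain restrictions in the two senses are suitably related; the rest is a direct assembly of previously established results. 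Note that we obtain only compressibility (not strong compressibility) because the passage through \cref{prop:comp over smaller set with constants} proceeds formula-by-formula, which is also why the conclusion of the corollary is weaker than that of \cref{cor:density of comp in countable NIP}.
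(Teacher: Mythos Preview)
Your proof is correct and follows essentially the same route as the paper's: reinterpret $\pi$ as an $\L$-partial type over $A\cup B$, verify it is t-compressible there by absorbing the $B$-parameters of the compressing formula into extra variables, apply \cref{cor:density of comp in countable NIP} in $T$, and then rescope back via \cref{prop:comp over smaller set with constants} and \cref{rem:comp up to the difference}. The bookkeeping you flag (that $\pi|_{A_0}$ in $\U_B$ is exactly $\pi|_{A_0\cup B}$ in $\U$, so the restriction to any finite $A_0'\subseteq A\cup B$ is handled) is precisely what the paper checks as well.
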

\begin{proof}
  Clearly it is enough to prove the ``more generally'' part.

  Note first that $\pi$ is t-compressible in $\U$ with respect to $AB$: if $\zeta(x,z,w)$ is such that $\zeta(x,z,b)$ compresses $\pi$ within $\pi$ in $\U_B$ (with respect to $A$) and $b \in B^w$, then $\zeta(x,z,w)$ compresses $\pi$ within $\pi$ with respect to $AB$. Indeed, given any $A_0B_0\subseteq_{\fin} AB$ there is some $d \in A^z$ such that (working in $\U$)
  \[\pi \vdash \phi(x,d,b) \vdash \pi|_{A_0B} \vdash \pi|_{A_0B_0}.\]

  By \cref{cor:density of comp in countable NIP}, there is a compressible type $p \in S(AB)$ containing $\pi$. By \cref{prop:comp over smaller set with constants}, $p$ is compressible up to $AB$, which implies (by \cref{rem:comp up to the difference}) that $p$ is compressible up to $B$, meaning that in $\U_B$, $p|_A \in S(A)$ (which equals $p$) is compressible.
\end{proof}

\subsection{Transitivity} \label{sec:transitivity}
We continue to assume that $T$ is NIP.

\begin{proposition} \label{prop:transitivity}
  Suppose $A \subseteq  B \subseteq  C$,
  $C$ is compressible over $B$, and $B$ is compressible over $A$.
  Then $C$ is compressible over $A$ (recall \cref{def:set compressible over}).
\end{proposition}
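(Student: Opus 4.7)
The plan is to reduce to \cref{prop:compressible iff orth to MS} and chain two applications of it through a common Morley sequence, crucially using the ``moreover'' clause.

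Let $\bar b$, $\bar c$ enumerate $B$, $C$ respectively, so the hypotheses are that $\tp(\bar b/A)$ and $\tp(\bar c/B)$ are compressible, and we want to show that $\tp(\bar c/A)$ is compressible. Fix some infinite linear order $X$ with no first element (e.g.\ $X = \Q$); by \cref{prop:compressible iff orth to MS} it suffices to show that for every $q \in \Sfs{A}{\U}$ and every $I \vDash q^{(X)}|_{A\bar c}$, $q|_{AI} \vdash q|_{A\bar c}$.

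Fix such $q$ and $I$. Since $A \subseteq B \subseteq C$, as sets $A\bar c = B\bar c = C$ and $A\bar b = B$, so $I \vDash q^{(X)}|_{A\bar b}$ and $q \in \Sfs{A}{\U} \subseteq \Sfs{B}{\U}$. The ``moreover'' part of \cref{prop:compressible iff orth to MS} applied to $\tp(\bar b/A)$ (noting $X$ has no first element) gives $q|_{AI} \vdash q|_{AI\bar b} = q|_{BI}$. The main part applied to $\tp(\bar c/B)$, with $q$ viewed in $\Sfs{B}{\U}$ and $I$ a Morley sequence of $q$ over $B\bar c = C$, gives $q|_{BI} \vdash q|_{B\bar c} = q|_{A\bar c}$. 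Composing yields $q|_{AI} \vdash q|_{A\bar c}$, as required.

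The main subtlety is the use of the ``moreover'' clause: without it, the first application would only yield $q|_{AI} \vdash q|_{A\bar b} = q|_B$, fixing the type of any $b^* \vDash q|_{AI}$ over $B$ but losing all information about its interaction with $I$ --- which is precisely what is needed in order to apply compressibility of $\tp(\bar c/B)$ in the second step. A technical point worth noting is that $\bar b$ and $\bar c$ may be infinite tuples, but \cref{prop:compressible iff orth to MS} still applies in this generality, via the decomposition of \cref{fac:decomp}, which works for infinite tuples (\cref{rem:on decomposition}(ii)).
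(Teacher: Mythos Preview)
Your proof is correct and takes a different route from the paper's. The paper first reduces to finite tuples $c$ from $C$, then invokes \cref{prop:comp over smaller set with constants} to conclude that $\tp(c/A)$ is compressible up to $B$, and finishes with an explicit formula-level argument: from a compressing formula $\zeta(b,x,a)$ with $b \in B$, it compresses the $\forall x(\zeta(w,x,z)\to\phi(x,y)^\epsilon)$-type of $b$ over $A$ and takes an existential projection.

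Your argument bypasses \cref{prop:comp over smaller set with constants} entirely and stays at the level of the Morley-sequence characterisation, chaining two applications of \cref{prop:compressible iff orth to MS} through the same sequence $I$. The essential trick is your use of the ``moreover'' clause, which upgrades $q|_{AI}\vdash q|_{A\bar b}$ to $q|_{AI}\vdash q|_{AI\bar b}=q|_{BI}$ and thereby hands you exactly the input needed for the second application over $B$. This is cleaner and more conceptual than the paper's formula manipulation, and it handles the infinite-tuple case without the preliminary reduction to finite $c$. Both arguments ultimately rest on the same decomposition result (\cref{fac:decomp}); the paper's version has the minor advantage of producing an explicit compressing formula, while yours makes the underlying orthogonality-to-finitely-satisfiable-types picture more transparent.
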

\begin{proof}
  By \cref{rem:compressible iff finite restrictions}, it is enough to show that $\tp(c/A)$ is compressible for any finite tuple $c$ from $C$.

  By \cref{prop:comp over smaller set with constants}, $\tp(c/A)$ is compressible up to $B$.

  So given $\phi(x,y) \in \L$ (where $c$ is of the sort of $x$),
  we get $\zeta(w,x,z) \in \L$, $b \in B^w$ such that for $A_0 \subseteq_{\fin}  A$
  there is $a \in A^z$ such that
  \[\tp(c/B) \vdash \zeta(b,x,a) \vdash  \tp_{\phi(x,y)}(c/A_0).\]
  Since $\tp(b/A)$ is compressible, for each $\epsilon<2$ there are $\xi_\epsilon(w,z_\epsilon)$ and $a_\epsilon \in A^{z_\epsilon}$ such that
  \[ \tp(b/A) \vdash \xi_\epsilon(w,a_\epsilon) \vdash \tp_{\forall x (\zeta(w,x,z) \rightarrow  \phi(x,y)^\epsilon)}(b/A_0a). \]
  Then
  \[\tp(c/A) \vdash (\exists w (\zeta(w,x,a) \wedge \xi_0(w,a_0) \wedge \xi_1(w,a_1))) \vdash \tp_{\phi(x,y)}(c/A_0). \]
  Hence $\tp(c/A)$ is compressible.
\end{proof}

\begin{example}\label{exa:transitivity is false for ABA}
  \cref{prop:transitivity} is false without NIP. For example, let $T$ be the theory of the countable atomless Boolean algebra. Let $A$ be a countable set of pairwise disjoint elements $\set{a_i}{i<\omega}$. For $i<\omega$, let $b_i = \bigcup a_{<i}$, and let $B = A \cup \set{b_i}{i<\omega}$. Let $c\neq 1\in \U$ contain all the elements from $A$. Let $C = Bc$. Then $C$ is compressible over $B$, $B$ is compressible over $A$, but $C$ is not compressible over $A$.

  Indeed, to show the first statement, it is enough to see
  that $p:=\tp(c/B)$ is compressible by \cref{lem:set + one more comp over set}. This is true since $\zeta(x,z):=(z<x) \land (x \neq 1)$ compresses it: for every finite $B_0 \subseteq B$, let $i<\omega$ be such that $\bigcup B_0 \leq b_i$. Then $p\vdash (b_i < x) \land (x \neq 1) \vdash p|_{B_0}$ by quantifier elimination.

  Since every tuple from $B$ is in the definable closure of $A$, $B$ is compressible over $A$ (every finite tuple is isolated).

  Finally, $C$ is not compressible over $A$, since compressibility is monotonic (\cref{lem:monotonicity}) and $c$ is not compressible over $A$. Why? Suppose $\psi(x,z)$ compresses $\tp_{x\geq y}(c/A)$. Let $A_0 = a_{<|z|+2} \subseteq A$, and assume $d\in A^z$ is such that $\psi(c,d)$ holds and $\psi(x,d)\vdash b_{|z|+2} \leq x$. Let $i<\omega$ be such that $a_i \notin d$. Then $(\bigcup d < x) \land (x\neq 1) \vdash \psi(x,d)$ (since this implies $\tp(c/d)$), so $(\bigcup d \cup a_i) \vDash \psi(x,d)$ but $\neg (b_{|z|+2} \leq (\bigcup d \cup a_i))$ holds, contradiction.

  Note that this example shows that in $T$, weak compressibility is different from compressibility (see \cref{que:weak compressibility} for the definition). Namely, $\tp(c/A)$ is weakly compressible (as witnessed by $\zeta(x,z)$) but not compressible.
\end{example}

\begin{remark} \label{rem:finite transitivity}
  The following rephrasing of \cref{prop:transitivity} is worth mentioning explicitly:
  given (perhaps infinite) tuples $c,b$ and a set $A$,
  if $\tp(c/Ab)$ and $\tp(b/A)$ are compressible, then $\tp(cb/A)$ is compressible.
  In this phrasing, this is a converse to \cref{lem:monotonicity} (see \cref{rem:converse to monotonicity}).

  This follows from \cref{prop:transitivity} and  \cref{lem:set + one more comp over set}. Note that \cref{prop:comp over smaller set with constants} where $B\setminus A$ is finite can be seen with a direct argument (not using NIP), so for finite tuples $c,b$, the above can be easily proven and does not require NIP.
\end{remark}

\subsection{Compressible models and applications}

In this section, $T'$ is a countable NIP theory with monster model $\U$ in the language $\L'$, $F\subseteq \U$ is some small subset, and $T = \Th(\U_F)$ in the language $\L:=\L'(F)$. In other words, $T$ is a complete theory we get by naming constants in a countable NIP theory (whose monster model is still denoted by $\U$, abusing notation).

%
%

\begin{definition}
  Say $B$ is \defn{compressibly constructible over $A$} if $B$ can be enumerated as $B = (b_i)_{i<\alpha}$ for some ordinal $\alpha$, such that $\tp(b_i / Ab_{<i})$ is compressible for all $i<\alpha$.
\end{definition}

As with other isolation notions, the existence of compressibly constructible models follows straightforwardly from density. In fact this is an instance of the abstract result \cite[Theorem~IV.3.1(5)]{Sh-CT}, but we give the proof.

\begin{proposition} \label{prop:comp constructible}
  For any set $A$, there exists a model $M \supseteq A$ which is compressibly constructible over $A$ and of cardinality $\leq |A|+|T|$.

  Moreover, if $B$ is compressibly constructible over $A$ then there is some model $M \supseteq B$ which is compressibly constructible over $A$ and of cardinality $\leq |B|+|T|$.
\end{proposition}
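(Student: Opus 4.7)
The plan is to prove the moreover clause, from which the first statement follows by taking $B := A$ with the empty enumeration. Set $\kappa := |B|+|T|$. The goal is to iteratively extend the compressibly constructible enumeration of $B$ by adjoining realisations of compressible one-variable types so as to satisfy Tarski-Vaught, producing an elementary substructure $M \supseteq  B$ of cardinality $\leq \kappa$.

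Starting with $B_0 := B$, I would build a chain $\sequence{B_n}{n<\omega}$ of sets with $|B_n| \leq  \kappa$ as follows. Given $B_n$, enumerate the consistent $\L$-formulas $\phi(x)$ in one free variable with parameters in $B_n$ as $\sequence{\phi_\gamma}{\gamma < \mu_n}$ with $\mu_n \leq  \kappa$ (possible since $|B_n| \leq  \kappa$ and $|\L| = |T| \leq  \kappa$). Recursively set $B_{n,0} := B_n$; at a successor stage let $B_{n,\gamma+1} := B_{n,\gamma} \cup \{c_\gamma\}$ where $c_\gamma$ realises a type $p_\gamma \in S(B_{n,\gamma})$ that is compressible in $T = \Th(\U_F)$ and contains $\phi_\gamma(x)$; take unions at limits. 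Such a $p_\gamma$ exists by \cref{cor:naming constants still comp}, which is directly applicable because $T$ is obtained from the countable NIP theory $T'$ by naming the constants $F$, and $\phi_\gamma(x)$ is consistent by choice. Put $B_{n+1} := \bigcup_{\gamma<\mu_n} B_{n,\gamma}$ and $M := \bigcup_{n<\omega} B_n$. Then $|M| \leq  \kappa$, and any consistent one-variable $\L$-formula over $M$ has parameters in some $B_n$ and is realised in $B_{n+1}$; by Tarski-Vaught, $M \prec  \U$, so $M \vDash  T$.

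To verify $M$ is compressibly constructible over $A$, concatenate the hypothesised enumeration $\varsequence{b_i}{i<\beta}$ of $B$ with the elements $c_\gamma$ listed in the lexicographic order of their indices $(n,\gamma)$. At the moment $c_\gamma$ appears in this concatenated enumeration, the set of strict predecessors is exactly $B_{n,\gamma}$, which contains $B \supseteq  A$; hence $A$ together with the predecessors equals $B_{n,\gamma}$, and $\tp(c_\gamma / B_{n,\gamma}) = p_\gamma$ is compressible by construction. Combined with the compressibility of each $\tp(b_i / Ab_{<i})$ assumed for the enumeration of $B$, this exhibits $M$ as compressibly constructible over $A$. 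The main subtlety is simply the bookkeeping that interleaves the Tarski-Vaught witnessing with the use of density, so that every element added contributes a type compressible over $A$ plus all previously named elements; no appeal to transitivity (\cref{prop:transitivity}) is required, since the definition of compressible constructibility demands only element-wise compressibility over predecessors, which is what our construction delivers directly.
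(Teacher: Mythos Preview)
Your proof is correct and follows essentially the same approach as the paper's: both build an $\omega$-chain of sets starting from $B$, at each stage enumerating the consistent formulas over the current set and adjoining realisations of compressible completions (via \cref{cor:naming constants still comp}) over everything built so far, then applying Tarski--Vaught. Your write-up is slightly more explicit about the bookkeeping (the indexing of predecessors and the one-variable restriction), but the argument is the same.
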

\begin{proof}
  Since $A$ is compressibly constructible over $A$, it is enough to prove the ``moreover'' part.

  Let $\lambda = |B|+|T|$.
  Construct an increasing chain $(B_i)_{i<\omega}$ as follows. Let $B_0 = B$, and given $B_i$, let $\sequence{\theta^i_j}{j<\lambda}$ enumerate all consistent formulas over $B_i$. Construct a sequence $\sequence{b^i_j}{j<\lambda}$ inductively by letting $b^i_j \vDash \theta^i_j$ be such that $\tp(b^i_j/B_ib^i_{<j})$ is compressible, using \cref{cor:naming constants still comp} (with $B=F$). Let $B_{i+1} = B_i \cup \set{b^i_j}{j<\lambda}$. Finally, $M := \bigcup_{i<\omega} B_i$ is as required, by Tarski-Vaught.
\end{proof}

Thanks to \cref{prop:transitivity}, we also have the following instance of \cite[Theorem~IV.3.2(1)]{Sh-CT}.

\begin{proposition} \label{prop:comp constructible atomic}
  If $B$ is compressibly constructible over $A$, then $B$ is compressible over $A$.
\end{proposition}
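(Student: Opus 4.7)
The plan is to prove by transfinite induction on $i \leq \alpha$ (where $\alpha$ is the length of the compressible construction of $B$) that $B_i := \{b_j : j < i\}$ is compressible over $A$; the case $i = \alpha$ then gives the conclusion. The base case $i = 0$ is trivial since $B_0 = \emptyset$.

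For the successor step $i = j+1$, I use the inductive hypothesis that $\tp(\bar B_j/A)$ is compressible (where $\bar B_j$ enumerates $B_j$) together with the compressibility of $\tp(b_j/Ab_{<j}) = \tp(b_j/A\bar B_j)$ from the definition of compressible constructibility. Applying the infinitary form of transitivity stated in \cref{rem:finite transitivity} (which follows from \cref{prop:transitivity} combined with \cref{lem:set + one more comp over set}), I conclude that $\tp(b_j\bar B_j / A)$ is compressible, i.e., $B_{i}$ is compressible over $A$.

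For a limit ordinal $i$, by \cref{rem:B compressible iff all finite subsets} it suffices to show that every finite tuple $c$ from $B_i$ has compressible type over $A$. But any such $c$ is contained in $B_j$ for some $j < i$, and then monotonicity (\cref{lem:monotonicity}, applied to extract a subtuple of a compressible tuple) gives that $\tp(c/A)$ is compressible, as desired.

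The only place where real work happens is the successor case, and this work has already been done in \cref{prop:transitivity}; after that, the argument is a standard induction. Thus the main nontrivial ingredient has been deferred to transitivity, which in turn depended on the rescoping result \cref{prop:comp over smaller set with constants} obtained via the type-decomposition theorem. The induction itself is entirely formal, so there is no serious obstacle here—this proposition is essentially the bookkeeping statement that compressibility, being transitive, behaves like a good isolation notion in the sense of Shelah's abstract constructibility.
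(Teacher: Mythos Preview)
Your proof is correct and follows essentially the same approach as the paper's, which proves by induction on $\beta$ that $B_{<\beta}$ is compressible over $A$, invoking \cref{prop:transitivity} and \cref{lem:set + one more comp over set} at successor steps and \cref{rem:B compressible iff all finite subsets} at limit steps. The only minor remark is that at the limit step the extraction of a finite subtuple from a (possibly infinite) compressible tuple is more precisely justified by \cref{rem:B compressible iff all finite subsets} (or \cref{rem:compressible iff finite restrictions}) than by \cref{lem:monotonicity}, which is stated only for finite tuples.
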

\begin{proof}
  Suppose $B = \set{b_i}{i<\alpha}$ is an enumeration witnessing that $B$ is compressibly constructible. Prove by induction on $\beta<\alpha$ that $B_{<\beta} = \set{b_i}{i<\beta}$ is compressible over $A$. For the successor steps use \cref{{prop:transitivity},{lem:set + one more comp over set}}, and for the limit steps use \cref{rem:B compressible iff all finite subsets}.
\end{proof}

So compressible models exist over arbitrary sets. We give some applications.

\subsubsection{Realising models of a stable part}

Suppose $A$ is some small set. Recall that \defn{the induced structure on $A$} is the structure $A_{\ind}$ whose universe is $A$ with the language consisting of a relation $R_{\phi(x)}$ for each $\L$-formula $\phi(x)$, where $R_{\phi(x)}^{A_{\ind}} = \set{a \in A^x}{\U \vDash \phi(a)}$.

\begin{lemma} \label{lem:A stable ->l-isol model}
  Suppose $A$ is such that $\Th(A_{\ind})$ is stable (or just has stable quantifier-free formulas). Then there exists an l-atomic model $M$ over $A$: for every finite tuple $b$ from $M$, $\tp(b/A)$ is l-isolated.
\end{lemma}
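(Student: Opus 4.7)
The plan is to combine the existence of compressibly constructible models established in this section with the observation from \cref{sec:comp and stability} that, under stability, compressibility forces l-isolation.

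First, apply \cref{prop:comp constructible} to obtain a model $M \supseteq A$ which is compressibly constructible over $A$. By \cref{prop:comp constructible atomic}, $M$ is then compressible over $A$. Hence, by \cref{rem:B compressible iff all finite subsets}, for every finite tuple $b$ from $M$ the type $\tp(b/A)$ is compressible.

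It remains to show that, under the hypothesis that the quantifier-free formulas of $\Th(A_{\ind})$ are stable, every compressible $p = \tp(b/A) \in S(A)$ is l-isolated. Suppose for contradiction that $p$ is compressible but not l-isolated. Applying \cref{lem:comp stable type}(i.a) produces an $\L$-formula $\theta(y,z)$ together with tuples $a_i \in A^y$ and $b_i \in A^z$ for $i<\omega$ such that $\vDash \theta(a_i,b_j) \Leftrightarrow i<j$. Now, the $\L$-formula $\theta(y,z)$ (in the partitioned variables $(y,z)$) corresponds to a single relation symbol $R_\theta$ of the induced structure $A_{\ind}$, and the same tuples $a_i, b_i \in A$ witness the order property for the quantifier-free formula $R_\theta(y,z)$ inside $A_{\ind}$. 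This contradicts stability of the quantifier-free formulas of $\Th(A_{\ind})$.

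The only mild point to keep in mind is bookkeeping of languages: the formula $\theta$ produced by \cref{lem:comp stable type}(i.a) is an $\L$-formula and may involve the constants from $F$, but since $\L = \L'(F)$ and $A_{\ind}$ has a predicate for every $\L$-formula, $\theta(y,z)$ still descends to a single atomic relation in $A_{\ind}$, so the argument goes through without obstruction.
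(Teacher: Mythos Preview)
Your proof is correct and follows essentially the same route as the paper's own argument: obtain a compressible model over $A$ via \cref{prop:comp constructible} and \cref{prop:comp constructible atomic}, then invoke \cref{lem:comp stable type}(i.a) to turn compressibility into l-isolation using that the order-property witnesses live in $A$ and hence contradict stability of the quantifier-free part of $\Th(A_{\ind})$. You have simply unpacked the step through \cref{lem:comp stable type}(i.a) and the language bookkeeping more explicitly than the paper does.
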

\begin{proof}
  By \cref{lem:comp stable type}(i.a), a compressible model over $A$ is l-atomic over $A$, so this follows directly from \cref{prop:comp constructible atomic,{prop:comp constructible}}.
\end{proof}

The next corollary deduces that the reduct map to a stable sort is surjective, and moreover elementary embeddings in the reduct theory can be lifted. Thanks to Anand Pillay and Martin Hils for suggesting that we consider this problem.

\begin{remark}\label{rem:stability of the induced structure on a definable set}
  Suppose $X$ is a $\emptyset$-definable set,
  and let $T_X := \Th(X(\U)_{\ind})$.
  If $T_X$ is stable then $X$ is 
  (uniformly) stably embedded by \cite[Proposition~3.19]{simon-NIP}. It
  follows that stability of $T_X$ is equivalent to assuming that $X$ is stable
  as a partial type in the sense of \cref{def:stable type}.
\end{remark}

\begin{corollary} \label{cor:reduct map surjective}
  Suppose $X$ is a $\emptyset$-definable subset of a sort of $\U$, 
  and let $T_X := \Th(X(\U)_{\ind})$.
  Suppose $T_X$ is stable.
  Let $N \vDash T_X$.
  Then there exists $M \vDash T$ such that $N = X(M)_{\ind}$.

  Moreover, if $N_1 \prec N_2 \vDash T_X$, and $M_1 \vDash T$ is such that $X(M_1)=N_1$, then there is $M_2 \succ M_1$ such that $X(M_2)=N_2$.
\end{corollary}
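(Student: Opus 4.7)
My plan is as follows. For the first part, I will realize $N$ as an elementary substructure of $X(\U)_{\ind}$, obtaining $A \subseteq X(\U)$ with $A_{\ind} = N$ under the identification along the embedding. By passing to a bigger monster model as in the preliminaries I may assume $N$ is small, so this is possible by saturation of $\U$ together with (uniform) stable embeddedness of $X$ (\cref{rem:stability of the induced structure on a definable set}). By \cref{prop:comp constructible,prop:comp constructible atomic}, there exists a model $M \vDash T$ containing $A$ which is compressible over $A$. It remains to show $X(M) = A$, which gives $X(M)_{\ind} = A_{\ind} = N$.

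The core step is to show that any $b \in X(M)$ lies in $A$. Given such $b$, $\tp(b/A)$ is compressible in $T$. For an $\L_X$-formula $\phi(x,y)$, let $\tilde{\phi}(x,y) \in \L$ be the corresponding formula, and let $\psi(x,z) \in \L$ compress $\tp_{\tilde{\phi}}(b/A)$ in $T$. Since the parameters come from $A \subseteq X$, the tuple $z$ is of sort $X$. By uniform stable embeddedness of $X$ there is a single $\L_X$-formula $\psi'(x,z')$ such that for every $c \in X^z$ there exists $c' \in X^{z'}$ with $\psi(X(\U),c) = \psi'(X(\U),c')$; by elementarity $A \prec X(\U)_{\ind}$, for $c \in A^z$ we may take $c' \in A^{z'}$. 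This translates the $T$-compression into a $T_X$-compression, so $\tp_{X,\ind}(b/A)$ is compressible in $T_X$. By stability of $T_X$ and \cref{lem:comp stable type}(i) it is l-isolated, and since $A \vDash T_X$ is a model, \cref{rem:l-isol is realised} yields a realization in $A$; this realization must equal $b$, so $b \in A$ (this is essentially the argument already appearing in \cref{lem:A stable ->l-isol model} combined with stable embeddedness).

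For the ``moreover'' part I apply the first part to $T'' := \Th(\U_{M_1})$, whose base $\L'$-theory is still countable NIP. Uniform stable embeddedness gives that $X(\U)_{\ind, T''}$ is interdefinable with the expansion of $X(\U)_{\ind}$ by constants for $N_1 = X(M_1)$, so its theory $T''_X$ is stable (adding constants preserves stability), and $N_2 \succ N_1$ naturally underlies a model of $T''_X$ extending $N_1$. Applying the first part to $T''$ with this target produces $M_2 \vDash T''$ with $X(M_2)_{\ind, T''} = N_2$; in particular $M_2 \supseteq M_1$ as $\L$-structures, and since both $M_1$ and $M_2$ are elementary in $\U$, we get $M_1 \prec M_2$, while $X(M_2) = N_2$ as sets. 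I expect the main obstacle to be the precise translation of compressibility in $T$ to compressibility in $T_X$, since one must ensure that the compressing parameters of the $\L_X$-formula come from $A$ (resp.\ from $A_2$); this is exactly where elementarity of $A$ inside $X(\U)_{\ind}$, combined with the uniform form of stable embeddedness available from stability of $T_X$, is essential.
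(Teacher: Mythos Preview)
Your argument is correct and follows the same strategy as the paper: embed $N$ into $X(\U)_{\ind}$, build a compressible model $M$ over the resulting set $A$, use stability to force $X(M)=A$, and for the moreover clause name $M_1$ by constants and repeat. The one minor difference is that the paper obtains l-isolation of $\tp(b/A)$ directly in $T$ via \cref{lem:A stable ->l-isol model} (whose proof invokes \cref{lem:comp stable type}(i.a): a compressible non-l-isolated type would yield the order property witnessed by tuples in $A$, contradicting stability of $A_{\ind}$), whereas you first translate compressibility into $T_X$; your translation is correct, but the appeal to stable embeddedness is not actually needed there, since the compressing parameters already lie in $A\subseteq X$ and one may simply take $\psi'=R_\psi$ with $c'=c$.
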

\begin{proof}
  Since $X(\U)$ is a saturated model of $T_X$, we may assume that $N \prec X(\U)$ and hence $N = A_{\ind}$ where $A\subseteq X(\U)$ is the universe of $N$. By \cref{lem:A stable ->l-isol model}, there exists $M \prec \U$ which is l-atomic over $A$. In particular, $X(M)_{\ind}$ is l-atomic over $A$, and so $X(M) = A$ by \cref{rem:l-isol is realised}.

 Now for the ``moreover'' part. Work in the language $\L(M_1)$ (adding constants for $M_1$). Then $N_1$ can be enriched to a model $N_1' \prec X(\U_{M_1})_{\ind}$ of $T_X' := \Th(X(\U_{M_1})_{\ind})$. By \cref{rem:stability of the induced structure on a definable set}, $X(\U)$ is stably embedded, and since $M_1 \prec \U$ this expansion of $N_1$ is conservative: all new relations are already definable with parameters from $N_1$. 
 Using these definitions, we can enrich $N_2$ to $N_2'$ in such a way that $N_1'\prec N_2'$, and in particular, $N_2' \vDash T_X'$.

  Now, $T_X'$ is still stable, so we may apply the first part (recall that we allow naming any number of constants in $T$) and get some $M_2' \vDash \Th(\U_{M_1})$ such that $X(M_2')=N_2'$. Letting $M_2$ be the reduct to $\L$ (forgetting the constants for $M_1$), we get that $X(M_2)=N_2$ and $M_1 \prec M_2$ as required (this is a bit subtle, since what we really get is that there is an elementary embedding from $M_1$ to $M_2$, but then we can change $M_2$, fixing $N_2$, so that this embedding becomes the identity).
\end{proof}

\begin{remark}
  The assumption that $X$ is a subset of a sort of $\U$ is needed to make sense of the statement (otherwise, if $X$ is e.g., a set of pairs and $N_1$ is not, then it is impossible that $X(M_1)=N_1$). This can be solved by instead asking for an isomorphism between $X(M_i)$ and $N_i$ for $i=1,2$ such that the diagram commutes. This holds, since if $X$ is contained in some product of sorts, then $X$ could be replaced by its image in one sort of $\U^\eq$ under the canonical bijection.
\end{remark}

\begin{example} \label{exa:not always surjective}
 Let $M=(\Aut(\Q,<),\Q,<)$ in the language $\L$ which includes a predicate $Q$ for $\Q$, a predicate $G$ for the group of automorphisms $\Aut(\Q,<)$ with the group operation $\cdot$ and the group action $e:(G\times Q)\to Q$. Let $T=\Th(M)$. Then, the induced structure on $Q$ is just the order, because given two increasing tuples $a:=a_0,\ldots, a_{n-1}$, $b:=b_0,\ldots, b_{n-1}$, there is an automorphism $\sigma$ of $M$ mapping $a$ to $b$ (there is some automorphism $g \in \Aut(\Q,<)$ mapping $a$ to $b$; let $\sigma$ be $g$ on $Q$ and conjugation by $g$ on $G$). 
 Hence, the theory $\Th(Q_{\ind})$ is a conservative extension of DLO $=\Th(\Q,<)$: all the relations are definable from the order.

 However, $T$ says that any two intervals have the same cardinality (because there is a bijection between them), and there are models of DLO in which this is not the case, and hence \cref{cor:reduct map surjective} does not hold in this case. This is not a surprise, since $Q$ is not stably embedded (as the graph of any non-trivial automorphism is not definable using just the order, since $\dcl$ is trivial there), and moreover the theory is not NIP (for any finite tuple, there is an automorphism fixing any sub-tuple and moving the rest).
\end{example}

\cref{exa:not always surjective} raises the following question.

\begin{question}
  Suppose that $T$ is NIP and that $X \subseteq \U$ is $\emptyset$-definable and stably embedded. Is it true that every model $M$ of $\Th(X(\U)_{\ind})$ of the form $X(M)$ for some $M\vDash T$?
\end{question}

This question is related to Gaifman's categoricity conjecture, see Hodges \cite{Hodges-GaifmanConj,Gaifman}.

\subsubsection{Extensions preserving a stable part}

\begin{corollary} \label{cor:inf orders => arbitrary large comp models}
  Suppose $A$ is a set such that if $M$ is a model containing $A$, then $M$ contains an infinite chain in some $M$-definable preorder.

  Then there are arbitrarily large models which are compressible over $A$.
\end{corollary}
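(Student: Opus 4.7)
The plan is to build an increasing chain of compressibly constructible models over $A$ by iteratively using \cref{rem:enough infinite chain} to produce a compressible but non-realised type at each successor step, and then invoke \cref{prop:comp constructible atomic} to conclude that the resulting models are compressible over $A$.

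More precisely, fix an arbitrary cardinal $\mu$. I will construct a continuous increasing chain $(M_i)_{i<\mu}$ of models with $A \subseteq M_0$ and $M_i \subsetneq M_{i+1}$, each compressibly constructible over $A$. Starting from some $M_0 \supseteq A$ given by \cref{prop:comp constructible}, suppose $M_i$ has been built. Since $M_i \supseteq A$, the hypothesis supplies an infinite chain in some $M_i$-definable preorder, and so by \cref{rem:enough infinite chain} there is $c_i$ with $\tp(c_i / M_i)$ compressible but not l-isolated; since $M_i$ is a model, \cref{rem:l-isol is realised} gives that any l-isolated type over $M_i$ is realised in $M_i$, so $c_i \notin M_i$. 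Concatenating the compressibly constructible enumeration of $M_i$ over $A$ with $c_i$ shows that $M_i c_i$ is compressibly constructible over $A$, and then the ``moreover'' part of \cref{prop:comp constructible} produces a model $M_{i+1} \supseteq M_i c_i$ which is compressibly constructible over $M_i c_i$, hence also over $A$. At limit stages $\lambda<\mu$, take $M_\lambda = \bigcup_{i<\lambda} M_i$; this is a model by Tarski--Vaught, and concatenating the successive enumerations exhibits it as compressibly constructible over $A$.

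Each $M_{i+1}$ properly contains $M_i$ (because $c_i \in M_{i+1} \setminus M_i$), so $|M_\mu| \geq \mu$ after a sufficiently long iteration (or one simply iterates for $\mu$ steps if $\mu$ is regular, or up to $\mu^+$ otherwise). By \cref{prop:comp constructible atomic} the resulting model is compressible over $A$, and since $\mu$ was arbitrary, this gives compressible models of unbounded size.

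The main conceptual point is that the hypothesis on infinite chains is exactly what is needed to ensure the existence of a compressible non-realised type over \emph{every} model containing $A$ (via \cref{rem:enough infinite chain}), so the iteration never stalls; once that is in place the argument is just the standard construction of atomic-like models using the density result \cref{prop:comp constructible} and transitivity \cref{prop:comp constructible atomic}. No substantial obstacle should arise; the only mildly delicate point is to verify at limit stages that compressible constructibility is preserved under unions of chains, which is immediate from the definition by concatenating enumerations.
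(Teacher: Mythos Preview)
Your proof is correct and follows essentially the same approach as the paper's. The only cosmetic difference is bookkeeping: the paper reduces to showing that any compressible model $M$ over $A$ can be strictly extended, obtains $N$ compressibly constructible over $M$ (applying the ``moreover'' part of \cref{prop:comp constructible} with base $M$), and then invokes transitivity (\cref{prop:transitivity}) explicitly to conclude $N$ is compressible over $A$; you instead maintain compressible constructibility over $A$ throughout the chain (applying \cref{prop:comp constructible} with base $A$) and apply \cref{prop:comp constructible atomic} only at the end, which hides the use of transitivity inside \cref{prop:comp constructible atomic}. One small slip in wording: the ``moreover'' part of \cref{prop:comp constructible} already gives $M_{i+1}$ compressibly constructible over $A$ directly (not over $M_i c_i$), so your ``hence also over $A$'' is unnecessary, though harmless.
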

\begin{proof}
  There exists a compressible model over $A$ by \cref{prop:comp constructible atomic,{prop:comp constructible}}. It is enough to show that any such model $M$ can be strictly extended to a compressible model over $A$. By assumption $M$ contains an infinite chain in some definable preorder. By \cref{rem:enough infinite chain}, there is some $c$ such that $\tp(c/M)$ is compressible but not l-isolated, so $c \notin M$. Apply (the ``moreover'' part of) \cref{prop:comp constructible} to get some compressibly constructible model $N$ over $M$ containing $Mc$. Then $N$ is compressible over $M$ by \cref{prop:comp constructible atomic}. By transitivity (\cref{prop:transitivity}), $N$ is compressible over $A$.
\end{proof}

\begin{example}
  The following example shows that \cref{cor:inf orders => arbitrary large comp models} requires the assumption that every model containing $A$ contains an infinite chain; mere instability does not suffice. Let $\L = \{<\}$ where $<$ is a binary relation symbol. Let $T$ say that $<$ is a partial order, that comparability is an equivalence relation having exactly one class of size $n$ for each $0<n<\omega$, and that each class has maximal and minimal elements and is discretely ordered (every non-maximal element has a successor and every non-minimal element has a predecessor). Let $M$ be the $\L$-structure with universe $\set{(i,j)}{i\in \N,j\leq i}$ and with $<^M = \set{(i,j_1),(i,j_2) \in M^2}{i\in \N,j_1<j_2}$. Then $M \vDash T$, and in fact every model of $T$ is a disjoint union of (a copy of) $M$ and infinite chains, each discretely ordered with a minimal and maximal element. Hence any two saturated models of $T$ are isomorphic and thus $T$ is complete. Moreover, $T$ has quantifier elimination after adding the predecessor and successor functions by a back-and-forth argument. It follows that $T$ is NIP (by e.g.\ showing that $x<y$ is NIP as there is a polynomial bound on every $(x<y)$-type over a finite subset of $M$).

  Suppose $c\in \U$. Then $\tp(c/M)$ is generically stable (see \cref{def:GS types over models}). Indeed, suppose $c\notin M$ and let $p$ be a global coheir extending $\tp(c/M)$. Then any Morley sequence $I:=\sequence{a_i}{i<\omega}$ of $p$ over $M$ must be such that $a_i,a_j$ are incomparable for $i\neq j$. 
  Thus $I$ is an indiscernible set by quantifier elimination, and hence $p$ is generically stable by NIP and the ``moreover'' part of \cref{fac:generically stable -> dfs}.

  Thus, if $\tp(c/M)$ is compressible then it is l-isolated by \cref{prop:generically stable compressible types are l-isolated} and hence realised by \cref{rem:l-isol is realised}. Hence $M$ is the only model compressible over $M$ and the conclusion of \cref{cor:inf orders => arbitrary large comp models} does not hold.

\end{example}

\begin{corollary} \label{cor:extend Preserve Stable}
  Let $M \vDash  T$, and suppose that $<$ is an $M$-definable preorder on an $M$-definable set $D$ and suppose that there is an infinite $<$-chain in $D(M)$.
  Then:

  \begin{enumerate}[(i)]
    \item There exists $N \succ  M$ with $D(N) \supsetneq D(M)$ such that if $a\in N$ and $\tp(a/M)$ is generically stable, then $a \in M$. (See \cref{def:GS types over models}.)
    \item For any cardinal $\lambda$ there is $N \succ  \M$ with $|D(N)| \geq  \lambda$ such that if $a\in N$ and $\tp(a/M)$ is generically stable, then $a\in M$.

    In particular, there are arbitrarily large elementary extensions $N$ of $M$ such that if $S$ is $M$-definable and stable (see \cref{def:stable type}), then $S(M) = S(N)$.
  \end{enumerate}


\end{corollary}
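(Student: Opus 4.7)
The plan is to apply \cref{cor:inf orders => arbitrary large comp models} with $A := M$ and to extract both conclusions from the compressibility of the resulting $N$ over $M$. The hypothesis of \cref{cor:inf orders => arbitrary large comp models} is immediate: any model $M'$ containing $A = M$ elementarily extends $M$, and hence inherits the infinite $<$-chain already present in $D(M)$, which is a chain in the (still $M'$-definable) preorder $<$. This produces models $N \succ M$ compressible over $M$ of arbitrary cardinality.

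The key lemma I will prove is: \emph{if $N \succ M$ is compressible over $M$ and $a \in N$ has $\tp(a/M)$ generically stable, then $a \in M$.} Indeed, by \cref{rem:B compressible iff all finite subsets} the type $\tp(a/M)$ is compressible. By \cref{def:GS types over models} it admits a global $M$-invariant extension which is generically stable over $M$, so \cref{prop:generically stable compressible types are l-isolated} yields that $\tp(a/M)$ is l-isolated. By \cref{rem:l-isol is realised} it is realized by some $b \in M$; since then $x = b$ lies in $\tp(b/M) = \tp(a/M)$, we get $a = b \in M$.

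For (i), I take any $N$ produced by the proof of \cref{cor:inf orders => arbitrary large comp models}: that proof invokes \cref{rem:enough infinite chain} to adjoin an element $c \in D$ with $\tp(c/M)$ compressible but not l-isolated, so $c \in D(N) \setminus D(M)$ as required. For (ii), iterate: inductively build an elementary chain $M = M_0 \prec M_1 \prec \cdots$ with each $M_\alpha$ compressible over $M$, at successor steps applying \cref{rem:enough infinite chain} inside $M_\alpha$ (whose $<$-chain in $D$ persists from $M$) to obtain $c_\alpha \in D \setminus M_\alpha$ with $\tp(c_\alpha/M_\alpha)$ compressible, then invoking the moreover part of \cref{prop:comp constructible} together with \cref{prop:comp constructible atomic} and \cref{prop:transitivity} to produce $M_{\alpha+1} \ni c_\alpha$ compressible over $M$; at limit stages take unions, preserving compressibility via \cref{rem:B compressible iff all finite subsets}. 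After $\lambda$ stages, $|D(N)| \geq \lambda$.

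For the ``In particular'' clause, let $S$ be $M$-definable and stable, and $a \in S(N)$. Every completion of $\tp(a/M)$ to a larger set extends the stable partial type $S$, hence is definable; thus $\tp(a/M)$ is itself stable, and by \cref{fac:stable => GS over model} it is generically stable. The key lemma then forces $a \in M$, so $S(N) = S(M)$. The only non-routine content is the key lemma, which is essentially a one-line combination of \cref{prop:generically stable compressible types are l-isolated} with \cref{rem:l-isol is realised}; the iteration for (ii) is straightforward and poses no real obstacle.
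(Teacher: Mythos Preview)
Your proposal is correct and follows essentially the same approach as the paper: use \cref{rem:enough infinite chain} to obtain $c\in D$ with $\tp(c/M)$ compressible but not l-isolated, build a compressibly constructible (hence compressible) model $N\supseteq Mc$ via \cref{prop:comp constructible}, and apply the key lemma combining \cref{prop:generically stable compressible types are l-isolated} with \cref{rem:l-isol is realised}. The paper is terser, simply saying that (ii) follows from (i) by taking the union of an elementary chain and leaving the preservation of compressibility over $M$ (via \cref{prop:comp constructible atomic}, \cref{prop:transitivity}, and \cref{rem:B compressible iff all finite subsets}) implicit; you make this explicit, which is fine.
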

\begin{proof}
  (ii) follows from (i) by taking the union of a suitably long elementary chain. We prove (i).

  By \cref{rem:enough infinite chain} there is some $c \in D(\U)$ such that $\tp(c/M)$ is compressible but not l-isolated. In particular $c \notin M$. Apply (the ``moreover'' part of) \cref{prop:comp constructible} to get some compressibly constructible model $N \succ M$ over $M$ containing $Mc$. By \cref{prop:generically stable compressible types are l-isolated}, if $a \in N$ and $\tp(a/M)$ is generically stable, then it is l-isolated and hence by \cref{rem:l-isol is realised}, it is realised.

  The ``in particular'' part follows since for any $a \in S(\U)$, $\tp(a/M)$ is stable and hence generically stable by \cref{fac:stable => GS over model}.
\end{proof}

\begin{remark}\label{rem:extend preserve stable omega-sat}
  Note that the condition in \cref{cor:extend Preserve Stable} holds whenever $T$ is unstable and $M$ is $\omega$-saturated, since in this case $T$ has the SOP (\cite[Theorem~2.67]{simon-NIP}) and hence any $\omega$-saturated model contains an infinite chain in some definable preorder.
\end{remark}

\subsubsection{Compressible types in ACVF}

\newcommand{\ACVF}{\operatorname{ACVF}}
\newcommand{\res}{\operatorname{res}}
\newcommand{\alg}{{\operatorname{alg}}}
\begin{example}\label{exa:ACVF}
  If $\U \vDash \ACVF$ and $A \subseteq \U^\eq$,
  then any model $M$ containing $A$ whose residue field is algebraic over $A$ is compressible over $A$; i.e.,
  if $M \prec \U$, $A \subseteq M^\eq$, and $k(M) = \acleq(A) \cap k$,
  where $k = k(\U)$ denotes the residue field,
  then $M$ is compressible over $A$.

  To see this, consider first a 1-type $\tp(a/B)$ over an $\acleq$-closed set $B=\acleq(B) \subseteq \U^\eq$, where $a \in \U$. We show that $\tp(a/B)$ is compressible iff $\dcleq(Ba) \cap k = B \cap k$.
  By unique Swiss cheese decompositions \cite[Theorem~3.26]{Holly}, $\tp(a/B)$ is determined by knowing which of the balls defined over $B$ contain $a$, i.e., $\tp(a/B)$ is implied by $\tp_\in(a/B)$ where $x \in y$ is the element relation between the valued field and the sort of balls (closed and open).
  One sees directly that this $\in$-type is compressible unless $\tp(a/B)$ is the generic type of a closed ball $\alpha$ over $B$ which contains infinitely many open balls over $B$ of radius $\gamma := \operatorname{radius}(\alpha)$.
  In that case, let $\beta_1 \neq \beta_2$ be distinct open subballs of $\alpha$ over $B$ of radius $\gamma$ (the existence of these two balls is our only use of our assumption that there are infinitely many such), and consider the map $\beta \mapsto \res(\frac{\beta-\beta_1}{\beta_2-\beta_1})$ for $\beta$ an open subball of $\alpha$ of radius $\gamma$. This is a well-defined injective map to $k$ defined over $B$, so genericity of $\tp(a/B)$ (and the fact that $B=\acleq(B)$) implies that the image under this map of the open ball around $a$ of radius $\gamma$ is not in $B \cap k$, so we have $\dcleq(Ba) \cap k \supsetneq B \cap k$.
  Conversely, if $\tp(a/B)$ is compressible, then since the residue field is a pure stably embedded algebraically closed field, we must have $\dcleq(Ba) \cap k = B \cap k$.

  The compressibility of $M$ over $A$ if $k(M) = \acleq(A) \cap k$ now follows by first taking a compressible construction sequence which alternates taking $\acleq$-closure and adding a single new element of $M$, and then applying \cref{prop:comp constructible atomic}.

  If $A \cap k$ is infinite then conversely $k(M) = \acleq(A) \cap k$ is necessary for compressibility of $M$ over $A$, but this fails in general;
  for example, if $A$ is finite, then any model containing $A$ is of course compressible over $A$.
\end{example}

\begin{remark}\label{rem:BM}
  The argument in \cref{exa:ACVF} is based on ideas from \cite{BM}, and combining it with \cite[Remark~3.12]{BM} actually yields an alternative proof of \cite[Theorem~5.6]{BM}. Indeed, suppose $K$ is a valued subfield of $\U$ with $\res(K)$ finite. Taking an elementary extension we may assume that $(\U,K)$ is sufficiently saturated. Let $A \subseteq K$. Then $k(K^\alg) = \acleq(A) \cap k$ is the algebraic closure of the prime field, so by \cref{exa:ACVF}, $K^\alg$, and in particular $K$, is compressible over $A$. By \cite[Remark~3.12]{BM}, it follows that $K$ is distal in $\U$ in the sense defined in that paper. The same argument applies to $K_r$ defined in \cite[Theorem~5.6]{BM}. However, this proof does not yield bounds on the exponents of the resulting distal cell decompositions, whereas such bounds are obtained in \cite[Remark~6.2]{BM} as a consequence of the more elementary methods of that paper.
\end{remark}

\bibliographystyle{alpha}
\bibliography{denseCompression}

\begin{thebibliography}{HWLW17}

\bibitem[ACP14]{Gen}
Hans Adler, Enrique Casanovas, and Anand Pillay.
\newblock Generic stability and stability.
\newblock {\em J. Symb. Log.}, 79(1):179--185, 2014.

\bibitem[BM21]{BM}
Martin Bays and Jean-François Martin.
\newblock Incidence bounds in positive characteristic via valuations and
  distality.
\newblock 2021.
\newblock arXiv:2104.04339.

\bibitem[CCT16]{CCT}
Xi~Chen, Yu~Cheng, and Bo~Tang.
\newblock On the recursive teaching dimension of {VC} classes.
\newblock In Daniel~D. Lee, Masashi Sugiyama, Ulrike von Luxburg, Isabelle
  Guyon, and Roman Garnett, editors, {\em Advances in Neural Information
  Processing Systems 29: Annual Conference on Neural Information Processing
  Systems 2016, December 5-10, 2016, Barcelona, Spain}, pages 2164--2171, 2016.

\bibitem[CG20]{MR4033642}
Gabriel Conant and Kyle Gannon.
\newblock Remarks on generic stability in independent theories.
\newblock {\em Ann. Pure Appl. Logic}, 171(2):102736, 20, 2020.

\bibitem[CS15]{CS-extDef2}
Artem Chernikov and Pierre Simon.
\newblock Externally definable sets and dependent pairs {II}.
\newblock {\em Trans. Amer. Math. Soc.}, 367(7):5217--5235, 2015.

\bibitem[DKL84]{MR876078}
Richard~M. Dudley, Hiroshi Kunita, and François Ledrappier.
\newblock {\em \'{E}cole d'\'{e}t\'{e} de probabilit\'{e}s de {S}aint-{F}lour.
  {XII}---1982}, volume 1097 of {\em Lecture Notes in Mathematics}.
\newblock Springer-Verlag, Berlin, 1984.
\newblock Papers from the summer school held in Saint-Flour, August
  22--September 8, 1982, Edited by P. L Hennequin.

\bibitem[EK20]{UDTFS}
Shlomo Eshel and Itay Kaplan.
\newblock On uniform definability of types over finite sets for {NIP} formulas.
\newblock {\em Journal of Mathematical Logic}, 0(0):2150015, 2020.

\bibitem[EK21]{MR4216280}
Pedro~Andr\'{e}s Estevan and Itay Kaplan.
\newblock Non-forking and preservation of {NIP} and dp-rank.
\newblock {\em Ann. Pure Appl. Logic}, 172(6):102946, 2021.

\bibitem[Gai74]{Gaifman}
Haim Gaifman.
\newblock Operations on relational structures, functors and classes. {I}.
\newblock In {\em Proceedings of the {T}arski {S}ymposium ({P}roc. {S}ympos.
  {P}ure {M}ath., {V}ol. {XXV}, {U}niv. {C}alifornia, {B}erkeley, {C}alif.,
  1971)}, pages 21--39, 1974.

\bibitem[Gui12]{MR2963018}
Vincent Guingona.
\newblock On uniform definability of types over finite sets.
\newblock {\em J. Symbolic Logic}, 77(2):499--514, 2012.

\bibitem[Hod11]{Hodges-GaifmanConj}
Wilfrid Hodges.
\newblock Proof of {G}aifman's conjecture for relatively categorical abelian
  groups.
\newblock {\em An. \c{S}tiin\c{t}. Univ. ``Ovidius'' Constan\c{t}a Ser. Mat.},
  19(2):101--130, 2011.

\bibitem[Hol95]{Holly}
Jan~E. Holly.
\newblock Canonical forms for definable subsets of algebraically closed and
  real closed valued fields.
\newblock {\em J. Symbolic Logic}, 60(3):843--860, 1995.

\bibitem[HP11]{MR2800483}
Ehud Hrushovski and Anand Pillay.
\newblock On {NIP} and invariant measures.
\newblock {\em J. Eur. Math. Soc. (JEMS)}, 13(4):1005--1061, 2011.

\bibitem[HWLW17]{k-isolationQuadratic}
Lunjia Hu, Ruihan Wu, Tianhong Li, and Liwei Wang.
\newblock Quadratic upper bound for recursive teaching dimension of finite {VC}
  classes.
\newblock In Satyen Kale and Ohad Shamir, editors, {\em Proceedings of the 30th
  Conference on Learning Theory, {COLT} 2017, Amsterdam, The Netherlands, 7-10
  July 2017}, volume~65 of {\em Proceedings of Machine Learning Research},
  pages 1147--1156. {PMLR}, 2017.

\bibitem[Mat04]{MR2060639}
Ji\v{r}\'{\i} Matou\v{s}ek.
\newblock Bounded {VC}-dimension implies a fractional {H}elly theorem.
\newblock {\em Discrete Comput. Geom.}, 31(2):251--255, 2004.

\bibitem[Pil96]{PillayGeometric}
Anand Pillay.
\newblock {\em Geometric stability theory}, volume~32 of {\em Oxford Logic
  Guides}.
\newblock The Clarendon Press, Oxford University Press, New York, 1996.
\newblock Oxford Science Publications.

\bibitem[PT11]{AnandPredrag}
Anand Pillay and Predrag Tanovi\'{c}.
\newblock Generic stability, regularity, and quasiminimality.
\newblock In {\em Models, logics, and higher-dimensional categories}, volume~53
  of {\em CRM Proc. Lecture Notes}, pages 189--211. Amer. Math. Soc.,
  Providence, RI, 2011.

\bibitem[She90]{Sh-CT}
S.~Shelah.
\newblock {\em Classification theory and the number of nonisomorphic models},
  volume~92 of {\em Studies in Logic and the Foundations of Mathematics}.
\newblock North-Holland Publishing Co., Amsterdam, 1990.

\bibitem[She04]{MR2062198}
Saharon Shelah.
\newblock Classification theory for elementary classes with the dependence
  property---a modest beginning.
\newblock volume~59, pages 265--316. 2004.
\newblock Special issue on set theory and algebraic model theory.

\bibitem[She09]{MR2570659}
Saharon Shelah.
\newblock Dependent first order theories, continued.
\newblock {\em Israel J. Math.}, 173:1--60, 2009.

\bibitem[Sim15a]{simon-NIP}
Pierre Simon.
\newblock {\em A guide to {NIP} theories}, volume~44 of {\em Lecture Notes in
  Logic}.
\newblock Association for Symbolic Logic, Chicago, IL; Cambridge Scientific
  Publishers, Cambridge, 2015.

\bibitem[Sim15b]{simon-invariant}
Pierre Simon.
\newblock Invariant types in {NIP} theories.
\newblock {\em J. Math. Log.}, 15(2):1550006, 26, 2015.

\bibitem[Sim20]{simon-decomposition}
Pierre Simon.
\newblock Type decomposition in {NIP} theories.
\newblock {\em J. Eur. Math. Soc. (JEMS)}, 22(2):455--476, 2020.

\bibitem[TZ12]{TentZiegler}
Katrin Tent and Martin Ziegler.
\newblock {\em A course in model theory}, volume~40 of {\em Lecture Notes in
  Logic}.
\newblock Association for Symbolic Logic, La Jolla, CA; Cambridge University
  Press, Cambridge, 2012.

\bibitem[Usv09]{MR2499428}
Alexander Usvyatsov.
\newblock On generically stable types in dependent theories.
\newblock {\em J. Symbolic Logic}, 74(1):216--250, 2009.

\bibitem[V{\"{a}}{\"{a}}03]{MR2007169}
Jouko V{\"{a}}{\"{a}}n{\"{a}}nen.
\newblock Pseudo-finite model theory.
\newblock volume~24, pages 169--183. 2003.
\newblock 8th Workshop on Logic, Language, Informations and
  Computation---WoLLIC'2001 (Bras\'{\i}lia).

\end{thebibliography}

\end{document}